\newtheorem{theorem}{Theorem}[section]
\newtheorem{definition}[theorem]{Definition}
\newtheorem{example}[theorem]{Example}
\newtheorem{lemma}[theorem]{Lemma}
\newtheorem{proposition}[theorem]{Proposition}
\newtheorem{remark}[theorem]{Remark}
\tikzset{edgee/.style = {> = latex'}}
\newcolumntype{P}[1]{>{\centering\arraybackslash}p{#1}}
\newcolumntype{M}[1]{>{\centering\arraybackslash}m{#1}}
\tikzset{%
  add/.style args={#1 and #2}{to path={%
 ($(\tikztostart)!-#1!(\tikztotarget)$)--($(\tikztotarget)!-#2!(\tikztostart)$)%
  \tikztonodes}}
} 
\newcommand\R{\mathbb{R}}
\newcommand\Z{\mathbb{Z}}
\newcommand\N{\mathbb{N}}
\newcommand{\A}{\mathcal{A}}
\newcommand{\B}{\mathcal{B}}
\newcommand{\C}{\mathcal{C}}
\newcommand{\BC}{\mathcal{BC}}
\newcommand{\D}{\mathcal{D}}
\newcommand{\Po}{\mathcal{P}}
\newcommand{\E}{\mathcal{E}}
\newcommand{\CT}{\mathcal{CT}}
\newcommand{\ST}{\mathcal{ST}}
\newcommand{\ipa}{\mathrm{L}}
\newcommand{\lt}[2]{\ensuremath{\alpha_{#1}^{(#2)}}}
\title[Sketches, moves and partitions]{Sketches, moves and partitions: counting regions of deformations of reflection arrangements}
\author{Priyavrat Deshpande}
\address{Department of Mathematics, Chennai Mathematical Institute, India}
\email{pdeshpande@cmi.ac.in}
\author{Krishna Menon}
\address{Department of Mathematics, Chennai Mathematical Institute, India}
\email{krishnamenon@cmi.ac.in}
\date{}
\begin{document}

\begin{abstract}
    The collection of reflecting hyperplanes of a finite Coxeter group is called a reflection arrangement and it appears in many subareas of combinatorics and representation theory. 
    We focus on the problem of counting regions of reflection arrangements and their deformations. 
    Inspired by the recent work of Bernardi, we show that the notion of moves and sketches can be used to provide a uniform and explicit bijection between regions of (the Catalan deformation of) a reflection arrangement and certain non-nesting partitions. 
%    \textcolor{red}{This seems to imply that we've solved it for all deformations. How about adding a line like this here: We apply this method to study the Catalan deformations of reflection arrangements.} 
    We then use the exponential formula to describe a statistic on these partitions such that distribution is given by the coefficients of the characteristic polynomial. 
    Finally, we consider a sub-arrangement of type C arrangement called the threshold arrangement and its Catalan and Shi deformations. 
    %This is not a reflection arrangement but it exhibits similar combinatorial features. 
\end{abstract}

\maketitle

%\textcolor{red}{
%So as long as we can choose canonical sketches in such a way that the analogue of \eqref{regbreakup} holds for them as well, we get results like the branch statistic one.
%}

%\textcolor{red}{K: Should we do Type A as a subarrangement of Type C as an example before Type D, B, BC?} Yes.

%\textcolor{red}{K: Should we try and obtain properties of the coefficients as well?}

%\textcolor{red}{K: \eqref{regbreakup} also shows that we only have to find a description for the $\alpha,\beta$-words for bounded regions in general Catalan.}

%\textcolor{blue}{P: Prpopsed restructuring - Sec 1 Intro, Sec 2 overview of Bernardi's paper, Sec 3 Sketches and moves for other types, Sec 4 Catalan deformation and its subarrangements, Sec 5 GF and statistics, Sec 6 the case of threshold deformations, Sec 7 concluding remarks}

\section{Introduction}\label{intro}

A \emph{hyperplane arrangement} $\A$ is a finite collection of affine hyperplanes (i.e., codimension $1$ subspaces and their translates) in $\R^n$. 
% Without loss of generality we assume that arrangements we consider are \emph{essential}, i.e., the subspace spanned by the normal vectors is the ambient vector space. 
A \emph{flat} of $\A$ is a nonempty intersection of some of the hyperplanes in $\A$; the ambient vector space is a flat since it is an intersection of no hyperplanes. 
Flats are naturally ordered by reverse set inclusion; the resulting poset is called the \emph{intersection poset} and is denoted by $\ipa(\A)$. 
The \emph{rank} of $\A$ is the dimension of the span of the normal vectors to the hyperplanes. 
An arrangement in $\R^n$ is called \emph{essential} if its rank is $n$. 
A \emph{region} of $\A$ is a connected component of $\R^n\setminus \bigcup \A$. 
A region is said to be \emph{bounded} if its intersection with the subspace spanned by the normal vectors to the hyperplanes is bounded. 
Counting the number of regions of arrangements using diverse combinatorial methods is an active area of research.

The \emph{characteristic polynomial} of $\A$ is defined as 
$\displaystyle \chi_{\A}(t) := \sum \mu(\hat{0},x)\, t^{\dim(x)}$
where $x$ runs over all flats in $\ipa(\A)$,  $\mu$ is its the M\"obius function and $\hat{0}$ corresponds to the flat $\R^n$. 
Using the fact that every interval of the intersection poset of an arrangement is a geometric lattice, we have
\begin{equation}\label{charform}
    \chi_\A(t) = \sum_{i=0}^n (-1)^{n-i} c_i t^i
\end{equation}
where $c_i$ is a non-negative integer for all $0 \leq i \leq n$ \cite[Corollary 3.4]{sta_hyp}. 
The characteristic polynomial is a fundamental combinatorial and topological invariant of the arrangement and plays a significant role throughout the theory of hyperplane arrangements.

In this article, our focus is on the enumerative aspects of (rational) arrangements in $\R^n$. 
In that direction we have the following seminal result by Zaslavsky.

% \begin{theorem}[\cite{zas75}]\label{zaslavsky}
% Let $\A$ be an arrangement in $\R^n$. Then the number of regions of $\A$ is given by 
% \begin{align*}
%   r(\A) &= (-1)^n \chi_{\A}(-1) \\
%          &= \sum_{i=0}^n c_i. 
% \end{align*}
% \end{theorem}
\begin{theorem}[\cite{zas}] \label{zaslavsky}
Let $\A$ be an arrangement in $\R^n$. Then the number of regions of $\A$ is given by 
\[ r(\A) = (-1)^n \chi_{\A}(-1) = \sum_{i=0}^n c_i\]
and the number of bounded regions is given by 
\[b(\A) = (-1)^{\operatorname{rank}(A)} \chi_{\A}(1). \]
\end{theorem}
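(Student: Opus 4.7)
The plan is to establish both equalities by induction on $|\A|$ using the deletion--restriction recursion. Fix any $H \in \A$ and let $\A' := \A \setminus \{H\}$ denote the deletion, and $\A''$ the restriction to $H$, i.e.\ the arrangement inside $H$ whose hyperplanes are the nonempty proper intersections $K \cap H$ for $K \in \A'$. The algebraic backbone of the argument is the recursion
\[
\chi_\A(t) \;=\; \chi_{\A'}(t) - \chi_{\A''}(t),
\]
obtained by partitioning the Möbius sum defining $\chi_\A$ over flats of $\ipa(\A)$ according to whether a flat is contained in $H$, and then applying Möbius inversion on the relevant intervals.

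For the region count, the matching geometric recursion is $r(\A) = r(\A') + r(\A'')$: re-inserting $H$ subdivides exactly those regions of $\A'$ that meet $H$, and the traces of such regions on $H$ are precisely the regions of $\A''$. Setting $f(\A) := (-1)^n \chi_\A(-1)$ and noting that the ambient space of $\A''$ is $H$ of dimension $n-1$ (so that $(-1)^{n-1}\chi_{\A''}(-1) = r(\A'')$ by induction), one computes
\[
f(\A) \;=\; f(\A') + (-1)^{n-1}\chi_{\A''}(-1) \;=\; r(\A') + r(\A'') \;=\; r(\A).
\]
The base case is the empty arrangement in $\R^n$, which has $\chi(t) = t^n$ and a single region. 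The equality $r(\A) = \sum_{i=0}^n c_i$ then follows by evaluating \eqref{charform} at $t = -1$.

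For the bounded-region formula, the same inductive scheme is applied to $g(\A) := (-1)^{\operatorname{rank}(\A)}\chi_\A(1)$, but the geometric recursion is more delicate: adding $H$ back may split an unbounded region of $\A'$ into one bounded and one unbounded piece, so $b$ is not cleanly additive under deletion--restriction. The plan is to choose $H$ in general position with respect to $\A'$ (so that $\operatorname{rank}(\A') = \operatorname{rank}(\A)$ and $\A''$ is essential within $H$), and to establish a matched pair of modified recursions in which the correction terms on the geometric and polynomial sides coincide.

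The main obstacle is pinning down this correction term. Among the $r(\A'')$ regions of $\A'$ split by $H$, one must separately count how many become two bounded regions of $\A$, how many give one bounded and one unbounded, and how many give two unbounded pieces, and then check that the resulting change in $b$ agrees with the change in $g$ dictated by the characteristic-polynomial recursion together with the rank relation. The cleanest execution is either via a case-analysis on the faces of $\A''$ (classifying each as a boundary face of a bounded or unbounded region of $\A$), or by the indirect route of passing to the arrangement at infinity: unbounded regions of $\A$ correspond to regions of an induced central arrangement, to which the already-proven region formula can be applied, and the resulting identity together with $r(\A) = b(\A) + \#\{\text{unbounded regions of }\A\}$ yields the bounded-region formula after a short algebraic manipulation relating $\chi_\A$ to the characteristic polynomial of this arrangement at infinity.
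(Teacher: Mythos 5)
The paper does not prove this theorem; it records it as a cited result from \cite{zas}, so there is no in-paper proof to compare against, and your proposal has to be judged on its own merits.

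The first half is fine: the region count follows by induction from the pair of recursions $\chi_\A = \chi_{\A'} - \chi_{\A''}$ and $r(\A) = r(\A') + r(\A'')$, exactly as you set it up. The bounded-region half, however, has a real gap. You propose to choose $H \in \A$ in general position with respect to $\A' = \A \setminus \{H\}$, but $H$ is required to already be a hyperplane of $\A$, and for most arrangements of interest no member of $\A$ is in general position relative to the others (any reflection arrangement, or any arrangement with three hyperplanes through a common codimension-two flat, gives a counterexample); so this normalisation step is unavailable. More importantly, you never actually establish the recursion that the induction needs. The clean identity $b(\A) = b(\A') + b(\A'')$ does hold when $\operatorname{rank}(\A') = \operatorname{rank}(\A)$, but it fails when deleting $H$ drops the rank: take $\A = \{x=0,\, x=1,\, y=0\}$ in $\R^2$ with $H = \{y=0\}$, so that $b(\A) = 0$ while $b(\A') = b(\A'') = 1$. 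In that rank-dropping case the parities $(-1)^{\operatorname{rank}}$ also mismatch, and the identity $\chi_\A(1) = \chi_{\A'}(1) - \chi_{\A''}(1)$ translates into
\[
(-1)^{\operatorname{rank}(\A)}\chi_\A(1) \;=\; -(-1)^{\operatorname{rank}(\A')}\chi_{\A'}(1) + (-1)^{\operatorname{rank}(\A'')}\chi_{\A''}(1),
\]
so the induction only closes if one separately proves that $b(\A) = 0$ and $b(\A') = b(\A'')$ whenever $\operatorname{rank}(\A') < \operatorname{rank}(\A)$. That case split is precisely the content of the bounded half of Zaslavsky's theorem; your write-up flags the obstruction but leaves both the face-classification route and the arrangement-at-infinity route as unexecuted plans, so the second equality is not proved as stated.
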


The finite field method,  developed by Athanasiadis \cite{ath_adv}, converts the computation of the characteristic polynomial to a point counting problem. 
A combination of these two results allowed for the computation of the number of regions of several arrangements of interest. 

Another way to count the number of regions is to give a bijective proof. 
This approach involves finding a combinatorially defined set whose elements are in bijection with the regions of the given arrangement and are easier to count. 
For example, the \textit{braid arrangement} in $\R^n$ is given by
\begin{equation*}
    \{x_i-x_j=0 \mid 1 \leq i < j \leq n\}.
\end{equation*}
It is straightforward to verify that its regions correspond to the permutations of $[n]$. 
Hence the number of regions of the braid arrangement in $\R^n$ is $n!$.

The Catalan arrangement of type $A$ in $\R^n$ is given by
\begin{equation*}
    \A_n = \{x_i - x_j = -1, 0, 1 \mid 1 \leq i < j \leq n\}.
\end{equation*}
This arrangement and its sub-arrangements have been studied in great detail (for example, see \cite{ber}). 
It is well-known that the number of regions of $\A_n$ where $x_1 < x_2 < \cdots < x_n$ (also known as the dominant regions) is given by the Catalan number $\frac{1}{n + 1}\binom{2n}{n}$. 

Using this, it is easy to see that \[r(\A_n)=\frac{n!}{n + 1}\binom{2n}{n}.\]

Let $\Phi$ be a (not necessarily reduced) crystallographic root system and let $\Phi^+$ be a choice of positive roots. 
The reflection (or Coxeter) arrangement $\A(\Phi)$ corresponding to $\Phi$ consists of hyperplanes with the defining equations 
\[(\alpha, x) = 0\quad \hbox{for~} \alpha\in\Phi^+.  \]
Note that these are the same hyperplanes that are fixed by the Weyl group of $\Phi$. 
A deformation of a reflection arrangement is an arrangement each of whose hyperplanes is parallel to some hyperplane in $\A(\Phi)$. 
Our main focus in the present paper is the Catalan deformation; for brevity we sometimes write Catalan arrangement of type $\Phi$.
%We have already defined Catalan arrangement of type $A$ above. 
The defining equations of Catalan arrangements of other types are as follows: 
\begin{itemize}
    \item The Catalan arrangement of type $B$ in $\R^n$ is given by
    \begin{equation*}
        \{x_i = -1, 0, 1 \mid i \in [n]\} \cup \{x_i + x_j = -1, 0, 1 \mid 1 \leq i < j \leq n\} \cup \A_n.
    \end{equation*}
    
    \item The Catalan arrangement of type $C$ in $\R^n$ is given by
    \begin{equation*}
        \{2x_i = -1, 0, 1 \mid i \in [n]\} \cup \{x_i + x_j = -1, 0, 1 \mid 1 \leq i < j \leq n\} \cup \A_n.
    \end{equation*}
    
    \item The Catalan arrangement of type $D$ in $\R^n$ is given by
    \begin{equation*}
        \{x_i + x_j = -1, 0, 1 \mid 1 \leq i < j \leq n\} \cup \A_n.
    \end{equation*}
    
    \item The Catalan arrangement of type $BC$ in $\R^n$ (defined in \cite{ath_lin}) is the union of the type $B$ and type $C$ Catalan arrangements in $\R^n$.
\end{itemize}

In addition to these, we also consider `type $C$ extended Catalan arrangements'; consisting of hyperplanes of the form $(\alpha, x) = k$ for $k = -m, \dots, m$ for a fixed integer $m\geq 1$.  
The characteristic polynomials, and hence the number of regions, of these arrangements are known (for example, see \cite{ath_lin}). 
We provide bijective proofs for the number of regions as well as bounded regions of these arrangements. 
Bijective proofs for the number of regions of the type $C$ Catalan arrangement have already been established in \cite{non_bij} and \cite{typc}. 
However, the proofs we present for the other arrangements seem to be new.

The idea used for the bijections is fairly simple but effective. 
This was used by Bernardi in \cite[Section 8]{ber} to obtain bijections for the regions of several deformations of the braid arrangement. 
This idea, that we call `sketches and moves', is to consider an arrangement $\B$ whose regions we wish to count as a sub-arrangement of an arrangement $\A$. 
This is done in such a way that the regions of $\A$ are well-understood and are usually total orders on certain symbols. 
These total orders are what we call \emph{sketches}. 
Since $\B \subseteq \A$, the regions of $\B$ partition the regions of $\A$ and hence define an equivalence on sketches. 
We define operations called \emph{moves} on sketches to describe the equivalence classes. 
In regions of $\A$, moves correspond to crossing hyperplanes in $\A \setminus \B$.

Apart from Bernardi's results, the results in \cite{ath_shi} and \cite{seo_cat} can also be viewed as applications of the sketches and moves idea to count regions of hyperplane arrangements.

When studying an arrangement, another interesting question is whether the coefficients of its characteristic polynomial can be combinatorially interpreted. 
By \Cref{zaslavsky}, we know that the sum of the absolute values of the coefficients is the number of regions. 
Hence, one could ask if there is a statistic on the regions whose distribution is given by the coefficients of the characteristic polynomial. 
The characteristic polynomial of the braid arrangement in $\R^n$ is 
$t(t-1)\cdots(t-n+1)$ \cite[Corollary 2.2]{sta_hyp}. 
Hence, the coefficients are the Stirling numbers of the first kind. 
Consequently, the distribution of the statistic `number of cycles' on the set of permutations of $[n]$ (which correspond to the regions of the arrangement) is given by the coefficients of the characteristic polynomial.

The paper is structured as follows: 
In \Cref{sketchmove}, we describe the sketches and moves idea mentioned above. 
We also use it to study the regions of some simple arrangements in \Cref{refarrsec}. 
In \Cref{typecsec}, we reprove the results in \cite{typc} about the type $C$ Catalan arrangement with a modification inspired by \cite{ath_non}. 
We then use the sketches and moves idea in \Cref{othertypessec} to obtain bijections for the regions of the Catalan arrangements of other types. 
In \Cref{statsec}, we describe statistics on the regions of the arrangements we have studied whose distribution is given by the corresponding characteristic polynomials. 
Finally, in \Cref{threshsec}, we use similar techniques to study an interesting arrangement called the threshold arrangement as well as some of its deformations.

\section{Sketches, moves and trees: a quick overview of Bernardi's bijection}\label{sketchmove}

In his paper \cite{ber}, Bernardi describes a method to count the regions of any deformation of the braid arrangement using certain objects called \textit{boxed trees}. 
He also obtains explicit bijections with certain trees for several deformations. 
The general strategy to establish the bijection is to consider an arrangement $\B$ whose regions we wish to count as a sub-arrangement of an arrangement $\A$ whose regions are well-understood. 
The regions of $\B$ then define an equivalence on the regions of $\A$. 
This is done by declaring two regions of $\A$ to be equivalent if they lie inside the same region of $\B$. 
Now counting the number of regions of $\B$ is the same as counting the number of equivalence classes of this equivalence on the regions of $\A$. 
This is usually done by choosing a canonical representative for each equivalence class, which also gives a bijection between the regions of $\B$ and certain regions of $\A$.

In particular, a (transitive) deformation of the braid arrangement is a sub-arrangement of the (extended or) $m$-Catalan arrangement (for some large $m$) in $\R^n$, whose hyperplanes are
\begin{equation*}
    \{x_i - x_j = k \mid 1 \leq i < j \leq n, k \in [-m, m]\}.
\end{equation*}
The regions of these arrangements are known to correspond labeled $(m + 1)$-ary trees with $n$ nodes (see \cite[Section 8.1]{ber}). 
Using the idea mentioned above, one can show that the regions a deformation correspond to certain trees. 
We should mention that while he obtains direct combinatorial arguments to describe this bijection for some transitive deformations (see \cite[Section 8.2]{ber}), the proof for the general bijection uses much stronger results (see \cite[Section 8.3]{ber}).

Coming back to the general strategy, which we aim to generalize in order to apply it to deformations of other types. 
It is clear that any two equivalent regions of $\A$ have to be on the same side of each hyperplane of $\B$. 
However, it turns out that this equivalence is the transitive closure of a simpler relation. 
This follows from the fact that one can reach a region in an arrangement from another by crossing exactly one hyperplane at a time with respect to which the regions lie on opposite sides. 
We now prove this result, for which we require the following definition.

\begin{figure}[H]
    \centering
    \begin{tikzpicture}[yscale=1.2]
        \node (a) at (0,2){};
        \node (b) at (-1,1){};
        \node (c) at (1,1){};
        \node (d) at (-2,0){};
        \node (e) at (2,0){};
        \node (f) at (-1,-1){};
        \node (g) at (1,-1){};
        \node (h) at (0,-2){};
        \node [circle,fill=blue,minimum size=0.1cm,inner sep=0pt] (i) at (-1,-1/3){};
        \node [circle,fill=blue,minimum size=0.1cm,inner sep=0pt] (j) at (-2,-2/3){};
        \node [circle,fill=blue,minimum size=0.1cm,inner sep=0pt] (k) at (-2.5,-1.75){};
        \node [circle,fill=blue,minimum size=0.1cm,inner sep=0pt] (l) at (-4,-1.45){};
        \draw [thick][add=0.5 and 1](a) to (e);
        \draw [thick][add=0.75 and 0.75](g) to (b);
        \draw [thick][add=0.75 and 0.75](c) to (f);
        \draw [thick][add=0.5 and 0.5](e) to (d);
        \draw [dashed][add=1 and 0.5](d) to (h);
        \draw [dashed][add=0.5 and 1](a) to (d);
        \draw [thick][add=0.5 and 1](h) to (e);
        \draw [thick][add=1.5 and 1.5](b) to (c);
        \draw [dashed][add=1.75 and 1.5](f) to (g);
        \draw [dashed,blue] (i)--(j)--(k)--(l);
    \end{tikzpicture}
    \caption{Bold lines form $\B$ and the dotted lines form $\A \setminus \B$.
    Equivalent $\A$ regions can be connected by changing one $\A \setminus \B$ inequality at a time.}
\end{figure}
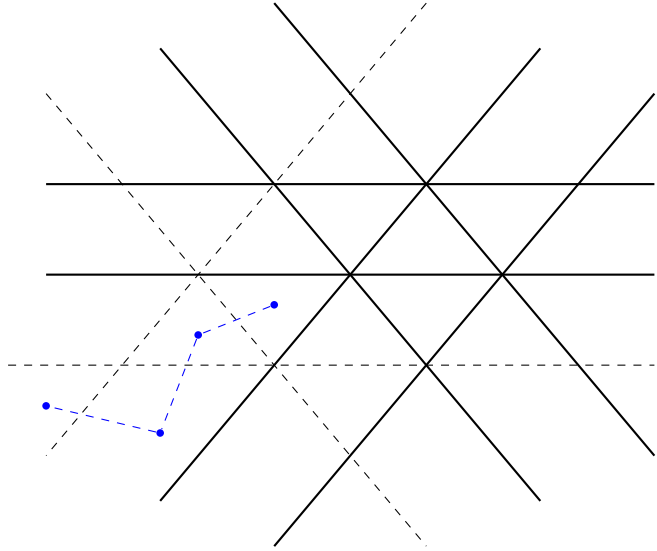

\begin{definition}
Let $R$ be a region of an arrangement $\A$. 
A \emph{determining set} of $R$ is a sub-arrangement $\D \subseteq \A$ such that the region of the arrangement $\D$ containing $R$, denoted $R_{\D}$, is equal to $R$.
\end{definition}

Note that a region of $\A$ always has the entire arrangement $\A$ as a determining set. 
Also, if a region $R'$ is on the same side as a region $R$ for each hyperplane in a determining set of $R$, then we must have $R = R'$.

Before going forward, we explicitly describe regions of an arrangement. 
First note that any hyperplane $H$ in $\R^n$ is a set of the form
\begin{equation*}
    \{\boldsymbol{x} \in \R^n \mid P_H(\boldsymbol{x}) = 0\}
\end{equation*}
where $P_H(\boldsymbol{x}) = a_1x_1 + a_2x_2 + \cdots + a_nx_n + c$ for some constants $a_1, \ldots, a_n, c \in \R$. 
Also, the regions of an arrangement $\A$ are precisely the non-empty intersections of sets of the form
\begin{equation*}
    \{\boldsymbol{x} \in \R^n \mid P_H(\boldsymbol{x}) > 0\}\text{ or }\{\boldsymbol{x} \in \R^n \mid P_H(\boldsymbol{x}) < 0\}
\end{equation*}
where we have one set for each $H \in \A$. 
Hence, crossing exactly one hyperplane $H$ in an arrangement corresponds to changing the inequality chosen for $H$ in this description of the region.

\begin{theorem}\label{thmcohaat}
If $\D$ is a minimal determining set of a region $R$ of an arrangement $\A$, then changing the inequality in the definition of $R$ of exactly one $H \in \D$, and keeping all other inequalities of hyperplanes in $\A$ the same, describes a non-empty region of $\A$.
\end{theorem}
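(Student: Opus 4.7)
My plan is to reduce the statement to the geometric fact that every hyperplane $H$ in a minimal determining set $\D$ is a facet-defining hyperplane of the convex polytope $R$, i.e.\ $\overline{R}\cap H$ has dimension $n-1$. Once this is established, the existence of the claimed region follows from a short perturbation argument: starting from a point in the relative interior of the facet $\overline{R}\cap H$ that avoids every other hyperplane of $\A$, one nudges across $H$ into the opposite open half-space, preserving all other defining inequalities of $R$ by continuity.

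The key step is the reduction. Fix $H\in\D$, set $\D'=\D\setminus\{H\}$, and let $\sigma_{H'}\in\{+1,-1\}$ record the sign of the inequality defining $R$ with respect to each $H'\in\A$. Then $R_{\D'}$ is the open convex set $\bigcap_{H'\in\D'}\{\boldsymbol{x}:\sigma_{H'}P_{H'}(\boldsymbol{x})>0\}$, and $R=R_{\D'}\cap\{\sigma_HP_H>0\}$. Minimality of $\D$ forces $R_{\D'}\neq R$, and a connectedness argument shows $R_{\D'}$ must meet $H$: otherwise $R_{\D'}$, being connected and disjoint from $H$, would lie entirely on the $R$-side of $H$ and coincide with $R$. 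Picking $\boldsymbol{x}_0\in R_{\D'}\cap H$ and using that $R_{\D'}$ is open in $\R^n$, a small ball around $\boldsymbol{x}_0$ sits inside $R_{\D'}$, so its intersection with $H$ is an open $(n-1)$-disc $U\subseteq R_{\D'}\cap H$. Any $\boldsymbol{y}\in U$ may be pushed slightly in the normal direction $\sigma_H\mathbf{n}_H$ to $H$, landing in $R_{\D'}\cap\{\sigma_HP_H>0\}=R$, so $U\subseteq\overline{R}\cap H$. This proves that $H$ is a facet hyperplane of $R$.

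To finish, since $\A$ is finite and each intersection $H'\cap H$ with $H'\neq H$ is an affine subspace of $H$ of dimension at most $n-2$, a dimension count produces a point $\boldsymbol{x}$ in the relative interior of $\overline{R}\cap H$ that lies on no other hyperplane in $\A$. At such an $\boldsymbol{x}$ one has $P_H(\boldsymbol{x})=0$, and since $\boldsymbol{x}\in\overline{R}$ but $\boldsymbol{x}\notin H'$ for $H'\neq H$, also $\sigma_{H'}P_{H'}(\boldsymbol{x})>0$ for every $H'\neq H$. Setting $\boldsymbol{x}_t=\boldsymbol{x}-t\sigma_H\mathbf{n}_H$ for small $t>0$ flips the sign of $P_H$, while continuity preserves the positivity of $\sigma_{H'}P_{H'}$ for every $H'\neq H$; hence $\boldsymbol{x}_t$ lies in the region of $\A$ obtained by flipping only the inequality at $H$, which is therefore non-empty. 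The main technical obstacle is the reduction step: making precise that the failure of $H$ to be a facet would allow us to drop $H$ from $\D$ without changing $R_{\D}$, contradicting minimality; the rest is routine convex geometry.
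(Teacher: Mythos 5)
Your proof is correct and takes essentially the same approach as the paper: minimality of $\D$ gives $R\subsetneq R_{\D'}$, so $H$ must cut through $R_{\D'}$; one then finds a point of $H\cap R_{\D'}$ lying on no other hyperplane of $\A$ and pushes across $H$. The only difference is that you spell out the dimension count (each $H'\cap H$ for $H'\ne H$ has dimension at most $n-2$) that the paper compresses into ``since $\A$ is finite,'' and you phrase the intermediate step in the language of facet-defining hyperplanes, but the underlying argument is the same.
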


Before proving this, we will see how it proves the fact mentioned above.
Start with two distinct regions $R$ and $R'$ of an arrangement $\A$.
We want to get from $R$ to $R'$ by crossing exactly one hyperplane at a time with respect to which the regions lie on opposite sides.
%We want to get from $R$ to $R'$ by changing one inequality by \todo{typo?} which $R$ and $R'$ differ at a time.

\begin{enumerate}
\item Let $\D$ be a minimal determining set of $R$.
\item Since $R \neq R'$ there is some $H \in \D$ for which $R'$ is on the opposite side as $R$.
\item Change the inequality corresponding to $H$ in $R$, call this new region $R''$.
\item The number of hyperplanes in $\A$ for which $R''$ and $R'$ lie on opposite sides is less than that for $R$ and $R'$.
\item Repeat this process to get to $R'$ by changing one inequality at a time.
\end{enumerate}

\begin{proof}[Proof of \Cref{thmcohaat}]
Let $H \in \D$.
Since $\D$ is a minimal determining set, $\E = \D \setminus \{H\}$ is not a determining set.
So $R$ is strictly contained in $R_{\E}$.
This means that the hyperplane $H$ intersects $R_{\E}$ and splits it into two open convex sets, one of which is $R$.
% Note that no other hyperplane of $\A$ intersects the portion of $H$ in $R_{\E}$ since this hyperplane would then split $R_\D$, which contradicts the fact that $R=R_\D$.
% This can be shown as follows:
% Suppose $H'$ intersects $H$ in $R_\E$.
% Consider the set of `diameters' of an $n$-ball centered at a point that the hyperplane $H'$ intersects $H$ in $R_{\E}$.
% Taking the radius of the ball to be small enough and using the fact that at least one of these diameters have to lie on the hyperplane $H'$, we get that $H'$ intersects $R_{\D}$.
% This will mean that $H'$ splits $R_\D$.

So we can choose a point $p \in H$ that lies inside $R_{\E}$ and an $n$-ball centered at $p$ that does not touch any other hyperplanes of $\A$ (since $\A$ is finite).
One half of the ball lies in $R$ and the other half lies in a region $R'$ of $\A$.
Since $R'$ can be reached from $R$ by just crossing the hyperplane $H$, we get the required result.
\end{proof}
% \begin{proof}
%     By the above remark, two Shi equivalent regions lie on the same side of $x_i-x_j=s$ hyperplanes for $i<j$ and $s=0,1$. Hence they lie in the same Shi region. Conversely, let two Catalan regions $R_1,R_2$ lie in the same Shi region. If $R_1=R_2$, we are done. If not, then $R_2$ lies on the opposite side as $R_1$ of at least one of the bounding hyperplanes of $R_1$ (bounding hyperplane is one such that just changing its inequality and keeping the others the same results in a nonempty region). Using this and induction, we can get that $R_2$ can be obtained from $R_1$ by changing exactly one inequality of the type $x_i-x_j>-1$ or $x_i-x_j<-1$ at a time (where $i<j$). This corresponds to a series of Shi moves. Hence $R_1$ and $R_2$ are Shi equivalent.
% \end{proof}

To sum up, we start with an arrangement $\B \subseteq \A$. 
We know the regions of $\A$ and usually represent them by combinatorial objects we call `sketches'. 
We then define `moves' on these sketches that correspond to changing exactly one inequality of a hyperplane in $\A \setminus \B$. 
We define sketches to be equivalent if one can be obtained from another through a series of moves. 
We then count the number of equivalence classes to obtain the number of regions of $\B$.
Before using this method to study the Catalan arrangements of various types, we first look at some simpler arrangements.

\section{Counting regions of reflection arrangements}\label{refarrsec}
In this section, as a warmup exercise, we illustrate the `sketches-moves' idea to study sub-arrangements of the type $C$ arrangement.
Hence, in the spirit of Bernardi \cite{ber}, we will define certain sketches corresponding to the region of the type $C$ arrangement and for any sub-arrangement, we choose a canonical sketch from each region.

\subsection{The type C arrangement}\label{typeC}
This arrangement in $\mathbb{R}^n$ is the set of reflecting hyperplanes of the root system $C_n$.
%Relevant definitions can be found in \cite{MR1066460}.
The defining equations of hyperplanes are
\begin{align*}
    2x_i&=0\\
    x_i+x_j&=0\\
    x_i-x_j&=0
\end{align*}
for $1 \leq i < j \leq n$.
Though we could write $x_i=0$ for the first type of hyperplanes, we think of them as $x_i+x_i=0$ to define sketches.
% Also, when we consider deformations, we will consider hyperplanes $2x_i=s$ for some integer $s$ and not all such hyperplanes are of the form $x_i=s'$ for some integer $s'$.

We can write the hyperplanes of the type $C$ arrangement as follows:
\begin{align*}
    x_i&=x_j, \hspace{0.75cm} 1 \leq i < j \leq n\\
    x_i&=-x_j, \quad i,j \in [n].
\end{align*}
Hence, any region of the arrangement is given by a \emph{valid} total order on
\begin{equation*}
    x_1, \ldots , x_n, -x_1, \ldots , -x_n.
\end{equation*}
A total order is said to be valid if there is some point in $\R^n$ that satisfies it. 
We will represent $x_i$ by $\overset{+}{i}$ and $-x_i$ by $\overset{-}{i}$ for all $i \in [n]$.

\begin{example}
The region $-x_2 < x_3 < x_1 < -x_1 < -x_3 < x_2$ is represented as $\overset{-}{2}\ \overset{+}{3}\ \overset{+}{1}\ \overset{-}{1}\ \overset{-}{3}\ \overset{+}{2}$.
\end{example}

It can be shown that words of the form
% \begin{equation*}
%     i_1\quad i_2 \quad  \dots\quad  i_n\quad  -i_n\quad  \dots \quad -i_2 \quad -i_1
% \end{equation*}
\begin{equation*}
    \overset{w_1}{i_1}\quad \overset{w_2}{i_2} \quad  \cdots\quad  \overset{w_n}{i_n}\quad  \overset{-w_n}{i_n}\quad  \cdots \quad \overset{-w_2}{i_2} \quad \overset{-w_1}{i_1}
\end{equation*}
where $\{i_1,\ldots,i_n\}=[n]$ are the ones that correspond to regions.
Such orders are the only ones that can correspond to regions since negatives reverse order.
Also, choosing $n$ distinct negative numbers, it is easy to construct a point satisfying the inequalities specified by such a word.
Hence the number of regions of the type $C$ arrangement is $2^nn!$.
We will call such words \emph{sketches} (which are basically signed permutations).
We will draw a line after the first $n$ symbols to denote the reflection and call the part of the sketch before the line its first half and similarly define the second half.

\begin{example}
$\overset{+}{3}\ \overset{-}{1}\ \overset{-}{2}\ \overset{+}{4}\ \textcolor{blue}{|}\ \overset{-}{4}\ \overset{+}{2}\ \overset{+}{1}\ \overset{-}{3}$ is a sketch.
\end{example}

We now study some sub-arrangements of the type $C$ arrangement. 
For each such arrangement, we will define the moves that we can apply to the sketches (which represent changing exactly one inequality corresponding to a hyperplane not in the arrangement) and then choose a canonical representative from each equivalence class. 
By \Cref{thmcohaat}, this gives a bijection between these canonical sketches and the regions of the sub-arrangement.

\subsection{The Boolean arrangement}\label{bool}

One of first examples one encounters when studying hyperplane arrangements is the Boolean arrangement.
The Boolean arrangement in $\mathbb{R}^n$ has hyperplanes $x_i = 0$ for all $i \in [n]$.
It is fairly straightforward to see that the number of regions is $2^n$.
We will do this using the idea of moves on sketches.

The hyperplanes missing from the type $C$ arrangement in the Boolean arrangement are
\begin{align*}
    x_i+x_j&=0\\
    x_i-x_j&=0
\end{align*}
for $1 \leq i < j \leq n$. 
Hence, the Boolean moves are as follows:
\begin{enumerate}
    \item Swapping adjacent $\overset{+}{i}$ and $\overset{-}{j}$ as well as $\overset{+}{j}$ and $\overset{-}{i}$ for distinct $i, j \in [n]$.
    \item Swapping adjacent $\overset{+}{i}$ and $\overset{+}{j}$ as well as $\overset{-}{j}$ and $\overset{-}{i}$ for distinct $i, j \in [n]$.
\end{enumerate}

The first kind of move corresponds to changing inequality corresponding to the hyperplane $x_i + x_j = 0$ and keeping all the other inequalities the same. 
Similarly, the second kind of move corresponds to changing only the inequality corresponding to $x_i - x_j = 0$.

\begin{example}
We can use a series of Boolean moves on a sketch as follows:
\begin{equation*}
    \overset{-}{4}\ \overset{+}{1}\ \overset{+}{2}\ \overset{-}{3}\ \textcolor{blue}{|}\ \overset{+}{3}\ \overset{-}{2}\ \overset{-}{1}\ \overset{+}{4}\ \longrightarrow \ \overset{-}{4}\ \overset{+}{2}\ \overset{+}{1}\ \overset{-}{3}\ \textcolor{blue}{|}\ \overset{+}{3}\ \overset{-}{1}\ \overset{-}{2}\ \overset{+}{4}\  \longrightarrow \ \overset{-}{4}\ \overset{+}{2}\ \overset{-}{3}\ \overset{+}{1}\ \textcolor{blue}{|}\ \overset{-}{1}\ \overset{+}{3}\ \overset{-}{2}\ \overset{+}{4}\ \longrightarrow \ \overset{-}{4}\ \overset{-}{3}\ \overset{+}{2}\ \overset{+}{1}\ \textcolor{blue}{|}\ \overset{-}{1}\ \overset{-}{2}\ \overset{+}{3}\ \overset{+}{4}
\end{equation*}
\end{example}

It can be shown that for any sketch, we can use Boolean moves to convert it to a sketch where the order of absolute values in the second half is $1, 2, \ldots, n$ (since adjacent transpositions generate the symmetric group). 
Also, since the signs of the numbers in the second half do not change there is exactly one such sketch in each equivalence class. 
Hence the number of Boolean regions is the number of ways of assigning signs to the numbers $1, 2, \ldots, n$ which is $2^n$.

\subsection{The type D arrangement}\label{typeD}

The type $D$ arrangement in $\mathbb{R}^n$ has the hyperplanes
\begin{align*}
    x_i+x_j&=0\\
    x_i-x_j&=0
\end{align*}
for $1 \leq i < j \leq n$.
The hyperplanes missing from missing from the type $C$ arrangement are
\begin{equation*}
    2x_i=0
\end{equation*}
for all $i \in [n]$.
Hence a type $D$ move, which we call a $D$ move, is swapping adjacent $\overset{+}{i}$ and $\overset{-}{i}$ for any $i \in [n]$.

\begin{example}
$\overset{+}{4}\ \overset{+}{1}\ \overset{-}{3}\ \overset{+}{2}\ \textcolor{blue}{|}\ \overset{-}{2}\ \overset{+}{3}\ \overset{-}{1}\ \overset{-}{4} \xrightarrow{D\ move} \overset{+}{4}\ \overset{+}{1}\ \overset{-}{3}\ \overset{-}{2}\ \textcolor{blue}{|}\ \overset{+}{2}\ \overset{+}{3}\ \overset{-}{1}\ \overset{-}{4}$
\end{example}

In a sketch the only such pair is the last term of the first half and the first term of the second half.
Hence $D$ moves actually define an involution on the sketches.
Hence the number of regions of the type $D$ arrangement is $2^{n-1}n!$.
We could also choose a canonical sketch in each type $D$ region to be the one where the first term of the second half is positive.

\subsection{The braid arrangement}\label{braid}

The braid arrangement in $\R^n$ has hyperplanes
\begin{equation*}
    x_i - x_j = 0
\end{equation*}
for $1 \leq i < j \leq n$. 
The hyperplanes missing from the type $C$ arrangement are
\begin{align*}
    2x_i &= 0\\
    x_i + x_j &= 0
\end{align*}
for all $1 \leq i < j \leq n$. 
Hence the braid moves are as follows:
\begin{enumerate}
    \item ($D$ move) Swapping adjacent $\overset{+}{i}$ and $\overset{-}{i}$ for any $i \in [n]$.
    
    \item Swapping adjacent $\overset{+}{i}$ and $\overset{-}{j}$ as well as $\overset{+}{j}$ and $\overset{-}{i}$ for distinct $i, j \in [n]$.
\end{enumerate}

\begin{example}
We can use a series of braid moves on a sketch as follows:

\begin{equation*}
    \overset{-}{4}\ \overset{-}{1}\ \overset{+}{3}\ \overset{+}{2}\ \textcolor{blue}{|}\ \overset{-}{2}\ \overset{-}{3}\ \overset{+}{1}\ \overset{+}{4} \longrightarrow \overset{-}{4}\ \overset{-}{1}\ \overset{+}{3}\ \overset{-}{2}\ \textcolor{blue}{|}\ \overset{+}{2}\ \overset{-}{3}\ \overset{+}{1}\ \overset{+}{4} \longrightarrow \overset{-}{4}\ \overset{-}{1}\ \overset{-}{2}\ \overset{+}{3}\ \textcolor{blue}{|}\ \overset{-}{3}\ \overset{+}{2}\ \overset{+}{1}\ \overset{+}{4} \longrightarrow \overset{-}{4}\ \overset{-}{1}\ \overset{-}{2}\ \overset{-}{3}\ \textcolor{blue}{|}\ \overset{+}{3}\ \overset{+}{2}\ \overset{+}{1}\ \overset{+}{4}
\end{equation*}
\end{example}

Any sketch is braid equivalent to one where the signs of all the numbers in the second half are positive. 
This can be proved using induction on the number of positive terms in the first half of the sketch. 
Find the rightmost term in the first half that is positive. 
Moves of the second type can be used to take it to the last position in the first half of the sketch. 
Then a $D$ move takes it to the second half of the sketch.

It can also be checked that braid moves do not change the order in which the positive terms appear in a sketch. 
This shows that there is a unique sketch in each braid equivalence class where all the terms in the second half are positive. 
Hence, the number of braid regions is the number of such sketches, which is $n!$.

\begin{remark}
The union of the braid and Boolean arrangements in $\R^n$ is just the essentialization of the braid arrangement in $\R^{n + 1}$. 
However, using the idea of moves on sketches, one can show that the regions of this arrangement are in bijection with sketches where the second half is of the form
\begin{equation*}
    \overset{+}{i_1}\ \overset{+}{i_2}\ \cdots\ \overset{+}{i_k}\ \overset{-}{i_{k + 1}}\ \cdots\ \overset{-}{i_n}
\end{equation*}
for some $k \in [0, n]$. 
This shows that the number of regions is $(n + 1)!$.
\end{remark}

We also study two other interesting sub-arrangements of the type $C$ arrangement in \Cref{threshfubsm}.

\section{Catalan deformation of type C}\label{typecsec}

In this section we reprove, with a modification inspired by \cite{ath_non}, the results of \cite{typc} about the regions of the type $C$ Catalan arrangements.

Fix $n \geq 1$ throughout this section. 
The type $C$ Catalan arrangement in $\mathbb{R}^n$ is the arrangement with hyperplanes
\begin{align*}
    2X_i&=-1,0,1\\
    X_i+X_j&=-1,0,1\\
    X_i-X_j&=-1,0,1
\end{align*}
for all $1 \leq i < j \leq n$. 
In this case, instead of looking at this arrangement directly, we will study the arrangement obtained by performing the translation $X_i=x_i+\frac{1}{2}$ for all $i \in [n]$. 
It is easy to see that this does not change the combinatorics of the arrangement. 
The translated arrangement, which we call $\C_n$, has hyperplanes
\begin{align}\label{Cnhyp}
    \begin{split}
    2x_i&=-2,-1,0\\
    x_i+x_j&=-2,-1,0\\
    x_i-x_j&=-1,0,1
    \end{split}
\end{align}
for all $1 \leq i < j \leq n$. 
The arrangement $\C_n$ consists of all hyperplanes of the form $x_i + s = \pm (x_j + t)$ for $i, j \in [n]$ and $s, t \in \{0, 1\}$. 
% The hyperplanes in this arrangement can be rewritten as
% \begin{align*}
%     &x_i+1=-x_i-1,\quad x_i+1=-x_i,\quad x_i=-x_i\\
%     &x_i+1=-x_j-1,\quad x_i+1=-x_j,\quad x_i=-x_j\\
%     &x_i+1=x_j,\quad x_i=x_j,\quad x_i=x_j+1
% \end{align*}
% for all $1 \leq i<j \leq n$. 
This shows that the regions of $\C_n$ are given by valid total orders on
\begin{equation*}
    \{x_i+s \mid i \in [n],\ s \in \{0,1\}\} \cup \{-x_i-s \mid i \in [n],\ s \in \{0,1\}\}.
\end{equation*}

Such orders will be represented by using the symbol \lt{i}{s} for $x_i+s$ and \lt{-i}{-s} for $-x_i-s$ for all $i \in [n]$ and $s \in \{0,1\}$.
Let $C(n)$ be the set
\begin{equation*}
    \{\lt{i}{s}\mid i \in [n],\ s \in \{0,1\}\} \cup \{\alpha_i^{(s)} \mid -i \in [n],\ s \in \{-1,0\}\}.
\end{equation*}
Hence, we use orders on the letters of $C(n)$ to represent regions of $\C_n$.

\begin{example}
The total order
\begin{equation*}
    x_1<-x_2-1<x_1+1<x_2<-x_2<-x_1-1<x_2+1<-x_1
\end{equation*}
is represented as $\alpha_{1}^{(0)} \ \alpha_{-2}^{(-1)} \ \alpha_{1}^{(1)} \ \alpha_{2}^{(0)} \ \alpha_{-2}^{(0)} \ \alpha_{-1}^{(-1)} \ \alpha_{2}^{(1)} \ \alpha_{-1}^{(0)}$.
\end{example}

Considering $-x_i$ as $x_{-i}$, the letter \lt{i}{s} represents $x_i+s$ for any $\lt{i}{s} \in C(n)$. 
For any $\lt{i}{s} \in C(n)$, we use $\overline{\lt{i}{s}}$ to represent the letter \lt{-i}{-s}, which we call the \emph{conjugate} of \lt{i}{s}.

\begin{definition}\label{symsk}
A \emph{symmetric sketch} is an order on the letters in $C(n)$ such that the following hold for any $\lt{i}{s}, \lt{j}{t} \in C(n)$:
\begin{enumerate}
    \item If $\alpha_i^{(s)}$ appears before $\alpha_j^{(t)}$, then $\overline{\alpha_j^{(t)}}$ appears before $\overline{\alpha_i^{(s)}}$.
    
    \item If $\alpha_i^{(s-1)}$ appears before $\alpha_j^{(t-1)}$, then $\alpha_i^{(s)}$ appears before $\alpha_j^{(t)}$.
    
    \item $\alpha_i^{(s-1)}$ appears before $\alpha_i^{(s)}$.
    % \item Each letter of $C(n)$ appears exactly once.
\end{enumerate}
\end{definition}

\begin{proposition}\label{pointconst}
An order on the letters of $C(n)$ corresponds to a region of $\C_n$ if and only if it is a symmetric sketch.
\end{proposition}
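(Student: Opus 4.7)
The forward direction is a direct verification. A point $\mathbf{x}=(x_1,\dots,x_n)\in\R^n$ lying in some region of $\C_n$ induces a strict total order on $C(n)$ via $\alpha_i^{(s)}\mapsto x_i+s$, using the convention $x_{-i}:=-x_i$ already set up in the paper. The three conditions of \Cref{symsk} follow from elementary properties of $<$ on $\R$: condition (1) from $a<b\iff-b<-a$, condition (2) from $a<b\iff a-1<b-1$, and condition (3) from $c-1<c$.

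For the backward direction, let $\sigma=\beta_1\beta_2\cdots\beta_{4n}$ be a symmetric sketch with position function $p\colon C(n)\to\{1,\dots,4n\}$. Condition (1) forces $p(\overline{\alpha})=4n+1-p(\alpha)$. The plan is to produce real numbers $v_1<v_2<\cdots<v_{4n}$ satisfying the symmetry $v_{4n+1-k}=-v_k$ and the shift equations $v_{p(\alpha_i^{(1)})}-v_{p(\alpha_i^{(0)})}=1$ for every $i\in[n]$, and then to set $x_i:=v_{p(\alpha_i^{(0)})}$. A direct check then shows $x_i+s=v_{p(\alpha_i^{(s)})}$ and $-x_i+s=v_{p(\alpha_{-i}^{(s)})}$, so the strict monotonicity of $(v_k)$ implies $\mathbf{x}$ induces exactly $\sigma$ on $C(n)$.

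The existence of such $(v_k)$ is the main content; I would establish it by induction on $n$. The base case $n=1$ is a finite check: the four symmetric sketches on $C(1)$ correspond to the four regions of $\C_1$. For the inductive step, the extremes $\beta_1$ and $\beta_{4n}=\overline{\beta_1}$ of $\sigma$ have indices in a common set $\{j,-j\}$ for a unique $j\in[n]$. Delete the four letters $\alpha_j^{(0)},\alpha_j^{(1)},\alpha_{-j}^{(-1)},\alpha_{-j}^{(0)}$ from $\sigma$ to obtain an order $\sigma'$ on the remaining $4(n-1)$ letters; conditions (1)--(3) restrict cleanly (the deleted set is closed under conjugation and shift), so after relabeling $\sigma'$ is a symmetric sketch on $C(n-1)$. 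Apply the inductive hypothesis to realize $\sigma'$ by some $(x_i)_{i\neq j}\in\R^{n-1}$, then choose $x_j$ to place the four deleted letters in their prescribed positions in $\sigma$.

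The main obstacle is verifying that a valid $x_j$ exists. The four deleted letters impose open-interval constraints on $x_j$, $x_j+1$, $-x_j$, and $-x_j-1$, obtained from the interlacings with the realized values of the remaining letters. Condition (1) makes the $x_j$- and $(-x_j)$-constraints equivalent; condition (2) ensures the $x_j$- and $(x_j+1)$-constraints are compatible translates of each other (up to the one-sided extension given by the extreme position of $\beta_{4n}$). Splitting into the two cases $\beta_{4n}=\alpha_j^{(1)}$ and $\beta_{4n}=\alpha_{-j}^{(0)}$ and analyzing the interlacings shows the feasibility region for $x_j$ is a nonempty open interval, and any choice from it completes the induction.
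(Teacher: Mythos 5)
Your proof is correct and matches the paper's approach: both directions are handled identically, with the converse established by induction on $n$, deleting the coordinate appearing at the extreme positions of the sketch, realizing the smaller symmetric sketch via the inductive hypothesis, and reinserting that coordinate so as to satisfy the induced interval constraints. The paper streamlines the same argument by assuming WLOG that the first letter is $\alpha_n^{(0)}$, then placing $x_n+1$ explicitly and using condition (2) to check that $x_n$ is automatically smallest; your case split on $\beta_{4n}$ and joint treatment of all four constraints is an equivalent, unrolled presentation of this verification.
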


\begin{proof}
The idea of the proof is the same as that of \cite[Lemma 5.2]{ath_non}. 
It is clear that any order that corresponds to a region must satisfy the properties in \Cref{symsk} and hence be a symmetric sketch. 
For the converse, we show that there is a point in $\mathbb{R}^n$ satisfying the inequalities given by a symmetric sketch.

We prove this using induction on $n$, the case $n = 1$ being clear. 
Let $n \geq 2$ and $w$ be a symmetric sketch. 
Without loss of generality, we can assume that the first letter of $w$ is $\lt{n}{0}$. 
Deleting the letters with subscript $n$ and $-n$ from $w$ gives a symmetric sketch $w'$ in the letters $C(n - 1)$. 
Using the induction hypothesis, we can choose a point $\mathbf{x'}  \in \R^{n-1}$ satisfying the inequalities given by $w'$. 
Suppose the letter before $\lt{n}{1}$ in $w$ is $\lt{i}{s}$ and the letter after it is $\lt{j}{t}$. 
We choose $x_n \neq -1$ such that $x'_i + s < x_n + 1 < x'_j + t$ in such a way that $x_n + 1$ is also in the correct position with respect $0$ specified by $w$. 
This is possible since $\mathbf{x'}$ satisfies $w'$.

We show that $(x'_1, \ldots, x'_{n-1}, x_n)$ satisfies the inequalities given by $w$. 
We only have to check that $x_n$ and $(x_n + 1)$ are in the correct relative position with respect to the other letters since property (1) of \Cref{symsk} will then show that $-x_n$ and $-x_n - 1$ are also in the correct relative position. 
By the choice of $x_n$, we see that $x_n + 1$ in the correct position. 
We have to show that $x_n$ is less than $\pm x'_i$ and $\pm (x'_i + 1)$ for all $i' \in [n-1]$. 
If $x_n > x'_1$, then $x_n + 1 > x'_1 + 1$ and since $x_n + 1$ satisfies the inequalities specified by $w$, $\lt{1}{1}$ must be before $\lt{n}{1}$ in $w$. 
But by property (2) of \Cref{symsk}, this means that $\lt{1}{0}$ must be before $\lt{n}{0}$ in $w$, which is a contradiction. 
The same logic can be used to show that $x_n$ satisfies the other inequalities given by $w$.
\end{proof}

We now derive some properties of symmetric sketches. 
A symmetric sketch has $4n$ letters, so we call the word made by the first $2n$ letters its first half. 
Similarly we define its second half.

\begin{lemma}\label{mir}
The second half of a symmetric sketch is completely specified by its first half.
In fact, it is the `mirror' of the first half, i.e., it is the reverse of the first half with each letter replaced with its conjugate.
\end{lemma}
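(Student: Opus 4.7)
The plan is to show that the mirror structure follows directly from property (1) of \Cref{symsk} alone, using a simple position-counting argument. Properties (2) and (3) will not be needed at all. The key claim to isolate is that the involution $\alpha \mapsto \overline{\alpha}$ on $C(n)$ reverses positions in any symmetric sketch: if $\alpha$ sits in position $k$ of a symmetric sketch $w$, then $\overline{\alpha}$ sits in position $4n + 1 - k$. Once this is established, specialising to $k \in \{1, \ldots, 2n\}$ immediately identifies each letter of the second half as a specific conjugate of a letter in the first half.

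First I would record that conjugation is a fixed-point-free involution on $C(n)$: since $\overline{\alpha_i^{(s)}} = \alpha_{-i}^{(-s)}$ and no index $i \in \{\pm 1, \ldots, \pm n\}$ satisfies $i = -i$, the $4n$ letters of $C(n)$ split into $2n$ disjoint conjugate pairs. Next, I would prove the position-reversal claim. Fix $\alpha$ at position $k$ in $w$. The $k - 1$ letters preceding $\alpha$ are distinct, and by property (1) of \Cref{symsk} each of their conjugates appears strictly after $\overline{\alpha}$; since conjugation is injective, this yields $k - 1$ distinct letters after $\overline{\alpha}$. Symmetrically, each of the $4n - k$ letters after $\alpha$ produces a letter strictly before $\overline{\alpha}$. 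Since the total number of letters other than $\overline{\alpha}$ is $4n - 1$, both inequalities must be equalities, and hence $\overline{\alpha}$ occupies position exactly $4n - k + 1$.

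Finally, I would translate this into the statement of the lemma. Write the first half of $w$ as $\beta_1, \ldots, \beta_{2n}$, so $\beta_k$ is the letter in position $k$ for $k \in \{1, \ldots, 2n\}$. By the reversal claim, $\overline{\beta_k}$ lies at position $4n + 1 - k \in \{2n + 1, \ldots, 4n\}$, so it belongs to the second half. Indexing the second half as $\gamma_1, \ldots, \gamma_{2n}$ with $\gamma_j$ at position $2n + j$, the equation $2n + j = 4n + 1 - k$ gives $k = 2n + 1 - j$, hence $\gamma_j = \overline{\beta_{2n + 1 - j}}$. This is exactly the assertion that the second half is the first half reversed and conjugated letter-by-letter.

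There is no substantive obstacle here; the only place to be careful is bookkeeping the index change from ``position in $w$'' to ``index within the second half,'' and verifying that conjugation has no fixed points so that the pairings used in the counting step are genuine.
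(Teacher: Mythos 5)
Your proof is correct and uses only property (1) of \Cref{symsk}, as does the paper's, but the two arguments are organized differently. The paper proceeds in two steps: it first shows that the set of second-half letters is exactly the set of conjugates of first-half letters (via a pigeonhole argument: if a conjugate pair lay wholly in the first half, the partition of $C(n)$ into $2n$ conjugate pairs would force another pair wholly into the second half, and property (1) applied to the two pairs yields a contradiction), and then reads off the order reversal from property (1) a second time. You instead prove the sharper position-reversal statement directly—$\alpha$ at position $k$ forces $\overline{\alpha}$ at position $4n+1-k$—via a single counting argument: the conjugates of the $k-1$ letters preceding $\alpha$ land strictly after $\overline{\alpha}$, the conjugates of the $4n-k$ letters following $\alpha$ land strictly before $\overline{\alpha}$, and since these $(k-1)+(4n-k)=4n-1$ letters exhaust $C(n)\setminus\{\overline{\alpha}\}$, the two bounds must be exact. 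This packages the ``which letters'' and ``in what order'' statements into one formula, dispenses with the pigeonhole step, and makes transparent exactly how much of property (1) is used. The fixed-point-freeness of conjugation that you flag (no $i$ satisfies $i=-i$ in $\{\pm 1,\dots,\pm n\}$) is indeed the small check that keeps the count of ``other letters'' equal to $4n-1$, so your bookkeeping is sound.
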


\begin{proof}
For any symmetric sketch, the letter \lt{i}{s} is in the first half if and only if the letter $\overline{\lt{i}{s}}$ is in the second half. 
This property can be proved as follows: 
Suppose there is a pair of conjugates in the first half of a symmetric sketch. 
Since conjugate pairs partition $C(n)$, this means that there is a pair of conjugates in the second half as well. 
But this would contradict property (1) of a symmetric sketch in \Cref{symsk}.

Hence, the set of letters in the second half are the conjugates of the letters in the first half.
The order in which they appear is forced by property (1) of \Cref{symsk}, that is, the conjugates appear in the opposite order as the corresponding letters in the first half. 
So if the first half of a symmetric sketch is $a_1 \cdots a_{2n}$ where $a_i \in C(n)$ for all $i \in [2n]$, the sketch is
\begin{equation*}
    a_1\quad a_2\quad \cdots\quad a_{2n}\quad \overline{a_{2n}}\quad \cdots\quad \overline{a_2}\quad \overline{a_1}.
\end{equation*}
\end{proof}

We draw a vertical line between the $2n^{th}$ and $(2n+1)^{th}$ letter in a symmetric sketch to indicate both the mirroring and the change in sign (note that if the $2n^{th}$ letter is \lt{i}{s}, we have $x_i+s < 0 < -x_i-s$ in the corresponding region).

\begin{example}
$\alpha_{-3}^{(-1)} \ \alpha_{-3}^{(0)} \ \alpha_{1}^{(0)} \ \alpha_{-2}^{(-1)} \ \alpha_{1}^{(1)} \ \alpha_{2}^{(0)} \ \textcolor{blue}{|}\ \alpha_{-2}^{(0)} \ \alpha_{-1}^{(-1)} \ \alpha_{2}^{(1)} \ \alpha_{-1}^{(0)} \ \alpha_{3}^{(0)} \ \alpha_{3}^{(1)}$.
\end{example}

A letter in $C(n)$ is called an \emph{$\alpha$-letter} if it is of the form \lt{i}{0} or \lt{-i}{-1} where $i \in [n]$. 
The other letters are called \emph{$\beta$-letters}. 
The $\beta$-letter `corresponding' to an $\alpha$-letter is the one with the same subscript. 
Hence, in a symmetric sketch, an $\alpha$-letter always appears before its corresponding $\beta$-letter by property (3) in \Cref{symsk}. 
The order in which the subscripts of the $\alpha$-letters appear is the same as the order in which the subscripts of the $\beta$-letters appear by property (2) of \Cref{symsk}. 
The proof of the following lemma is very similar to that of the previous lemma.

\begin{lemma}\label{ord}
    The order in which the subscripts of the $\alpha$-letters in a symmetric sketch appear is of the form
    \begin{equation*}
        i_1 \quad i_2 \quad \cdots \quad i_n \quad -i_n \quad \cdots \quad -i_2 \quad -i_1
    \end{equation*}
    where $\{|i_1|,\ldots,|i_n|\}=[n]$.
\end{lemma}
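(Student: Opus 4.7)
The goal is to show that listing the subscripts of the $2n$ $\alpha$-letters of a symmetric sketch in order yields a sequence $\sigma(1),\ldots,\sigma(2n)$ from $\{\pm1,\ldots,\pm n\}$ which satisfies $\sigma(2n+1-k)=-\sigma(k)$, and whose first $n$ entries have distinct absolute values covering $[n]$. The plan is to extract the anti-palindromic property by combining property~(2) of \Cref{symsk} with the mirror description in \Cref{mir}, and then to derive the absolute-value claim as a short combinatorial consequence.

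The first step is to reformulate property~(2). For each subscript $i\in\{\pm1,\ldots,\pm n\}$ the two valid levels form a consecutive pair, with the lower level giving the $\alpha$-letter of subscript $i$ and the higher level giving its corresponding $\beta$-letter. Hence property~(2), applied in both directions (swapping the roles of $i$ and $j$), says exactly that for any two subscripts $i$ and $j$ the $\alpha$-letter of subscript $i$ precedes the $\alpha$-letter of subscript $j$ if and only if the $\beta$-letter of subscript $i$ precedes the $\beta$-letter of subscript $j$. Thus the sequences of subscripts read left to right from the $\alpha$-letters and from the $\beta$-letters coincide; call this common permutation of $\{\pm1,\ldots,\pm n\}$ by $\sigma$.

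Next I apply \Cref{mir}. A direct check shows that conjugation $\alpha_i^{(s)}\mapsto\alpha_{-i}^{(-s)}$ swaps $\alpha$-letters with $\beta$-letters while negating subscripts. Writing the sketch as $a_1\cdots a_{2n}\overline{a_{2n}}\cdots\overline{a_1}$, let $p_1<\cdots<p_m$ and $q_1<\cdots<q_{2n-m}$ be the $\alpha$- and $\beta$-positions in the first half. The second-half $\alpha$-letters are then precisely the conjugates of the first-half $\beta$-letters in reverse order, and similarly for the second-half $\beta$-letters. Reading off each of the two subscript sequences explicitly and equating them with $\sigma$ position by position forces $\sigma(2n+1-k)=-\sigma(k)$ for every $k$. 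The main hazard here is the indexing: one must keep straight how the mirror sends first-half positions to second-half positions, and how it swaps the $\alpha/\beta$ labels in the process.

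Finally, since $\sigma$ is a permutation of $\{\pm1,\ldots,\pm n\}$ satisfying $\sigma(2n+1-k)=-\sigma(k)$, a coincidence $|\sigma(j)|=|\sigma(k)|$ with $j<k\le n$ would force either $\sigma(j)=\sigma(k)$ or $\sigma(j)=-\sigma(k)=\sigma(2n+1-k)$, both contradicting injectivity of $\sigma$. Hence $|\sigma(1)|,\ldots,|\sigma(n)|$ are $n$ distinct elements of $[n]$ and therefore exhaust $[n]$, which gives the required form $i_1,\ldots,i_n,-i_n,\ldots,-i_1$ with $\{|i_1|,\ldots,|i_n|\}=[n]$.
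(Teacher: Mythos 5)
Your proof is correct and follows essentially the same route the paper intends: the paper (just before Lemma~\ref{ord}) already notes that property~(2) forces the $\alpha$- and $\beta$-subscripts to appear in the same order, and then states that Lemma~\ref{ord} is proved ``very similar to that of the previous lemma,'' i.e., by combining that observation with the mirror property of Lemma~\ref{mir} and the fact that conjugation interchanges $\alpha$- and $\beta$-letters while negating subscripts. Your reformulation of property~(2), the anti-palindromic identity $\sigma(2n+1-k)=-\sigma(k)$, and the injectivity argument for the absolute values are exactly the expected steps.
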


Using \Cref{mir,ord}, to specify the sketch, we only need to specify the following:
\begin{enumerate}
    \item The $\alpha,\beta$-word corresponding to the first half.
    \item The signed permutation given by the first $n$ $\alpha$-letters.
\end{enumerate}
The $\alpha,\beta$-word corresponding to the first half is a word of length $2n$ in the letters $\{\alpha, \beta\}$ such that the $i^{th}$ letter is an $\alpha$ if and only if the $i^{th}$ letter of the symmetric sketch is an $\alpha$-letter.

There is at most one sketch corresponding to a pair of an $\alpha,\beta$-word and a signed permutation. 
This is because the signed permutation tells us, by \Cref{ord}, the order in which the subscripts of the $\alpha$-letters (and hence $\beta$-letters) appears. 
Using this and the $\alpha,\beta$-word, we can construct the first half and, by \Cref{mir}, the entire sketch.

\begin{example}\label{sketchexample}
To the symmetric sketch
\begin{equation*}
    \alpha_{-3}^{(-1)} \ \alpha_{-3}^{(0)} \ \alpha_{1}^{(0)} \ \alpha_{-2}^{(-1)} \ \alpha_{1}^{(1)} \ \alpha_{2}^{(0)} \ \textcolor{blue}{|}\ \alpha_{-2}^{(0)} \ \alpha_{-1}^{(-1)} \ \alpha_{2}^{(1)} \ \alpha_{-1}^{(0)} \ \alpha_{3}^{(0)} \ \alpha_{3}^{(1)}
\end{equation*}
we associate the pair consisting of the following:
\begin{enumerate}
    \item $\alpha,\beta$-word: $\alpha\beta\alpha\alpha\beta\alpha$.
    \item Signed permutation: $-3\quad 1\quad -2$.
\end{enumerate}
If we are given the $\alpha,\beta$-word and signed permutation above, the unique sketch corresponding to it is the one given above.
\end{example}

The next proposition characterizes the pairs of $\alpha,\beta$-words and signed permutations that correspond to symmetric sketches.

\begin{proposition}\label{absp}
A pair consisting of
\begin{enumerate}
    \item an $\alpha,\beta$-word of length $2n$ such that any prefix of the word has at least as many $\alpha$-letters as $\beta$-letters and
    \item any signed permutation
\end{enumerate}
corresponds to a symmetric sketch and all symmetric sketches correspond to such pairs.
\end{proposition}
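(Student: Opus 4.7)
The plan is to establish the correspondence in both directions; combined with the uniqueness already noted just before the statement, this yields the claimed bijection. Throughout I rely on \Cref{mir} and \Cref{ord}.

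\emph{Forward direction.} Given a symmetric sketch $S$, I read off the $\alpha,\beta$-word $w$ from the types of the first $2n$ letters of $S$, and the signed permutation $\pi=(i_1,\dots,i_n)$ from the subscripts of the first $n$ $\alpha$-letters of $S$; that $\pi$ is indeed a signed permutation of $[n]$ is precisely \Cref{ord}. To verify the prefix condition on $w$, I combine property (2) of \Cref{symsk} with \Cref{ord}: the subscripts of the $\alpha$-letters of $S$, read in order, agree with the subscripts of the $\beta$-letters read in order, so the $k$-th $\alpha$-letter and the $k$-th $\beta$-letter of $S$ share a common subscript. Property (3) then forces the $k$-th $\alpha$ to precede the $k$-th $\beta$ throughout $S$; restricting to the first half yields the prefix inequality, and in particular forces the number of $\alpha$-letters in the first half to be at least $n$.

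\emph{Backward direction.} Given a pair $(w,\pi)$ satisfying the hypotheses, I construct a candidate sketch as follows. Form the sequence $(j_1,\dots,j_{2n})=(i_1,\dots,i_n,-i_n,\dots,-i_1)$ dictated by \Cref{ord}. In the first half, fill the $k$-th $\alpha$-slot of $w$ (reading left to right) with the $\alpha$-letter of subscript $j_k$, and the $k$-th $\beta$-slot with the $\beta$-letter of subscript $j_k$. The prefix condition forces $w$ to contain at least $n$ $\alpha$'s, so the subscripts $i_1,\dots,i_n$ all fit as $\alpha$-letters in the first half. Finally, apply \Cref{mir} to obtain the second half as the mirror image.

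\emph{Verification.} It remains to check that this candidate satisfies the three properties of \Cref{symsk}. Property (1) is immediate from the mirror construction, and property (2) follows because the $k$-th $\alpha$-letter and the $k$-th $\beta$-letter share subscript $j_k$, so the two subscript orderings coincide. The main obstacle is property (3), which requires that each $\alpha$-letter precede its matching $\beta$-letter. Writing $A_1<\cdots<A_a$ and $B_1<\cdots<B_b$ for the $\alpha$- and $\beta$-positions in the first half, the mirror construction locates all $\alpha$- and $\beta$-positions throughout the full sketch. A short case analysis on $k\leq b$, $b<k\leq a$, and $k>a$ trivializes the middle case and reduces the other two cases to the single inequality $A_\ell<B_\ell$ for all $\ell\leq b$; the prefix condition evaluated at $B_\ell$ gives $a(B_\ell)\geq b(B_\ell)=\ell$, hence $A_\ell<B_\ell$. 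Once the three properties are established, the candidate is a symmetric sketch and, by construction, the forward map sends it back to the original pair $(w,\pi)$.
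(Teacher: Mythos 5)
Your proposal is correct and follows essentially the same route as the paper: construct the candidate word by filling $\alpha$- and $\beta$-slots with subscripts from the extended signed permutation $i_1,\dots,i_n,-i_n,\dots,-i_1$, then verify the three defining properties of a symmetric sketch. The only cosmetic difference lies in the verification of property (3): the paper shows the prefix inequality extends to the entire $4n$-letter word via a telescoping sum (treating $\alpha$ as $+1$, $\beta$ as $-1$), whereas you compare the positions $A_k, B_k$ of the $k$-th $\alpha$ and $\beta$ directly via a three-way case split; both reduce to the prefix condition in the first half and are equivalent.
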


\begin{proof}
By property (3) of \Cref{symsk}, any $\alpha,\beta$-word corresponding to the first half of a sketch should have at least as many $\alpha$-letters as $\beta$-letters in any prefix.

We now prove that given such a pair, there is a symmetric sketch corresponding to it.
If the given $\alpha,\beta$-word is $l_1 l_2 \cdots l_{2n}$ and the given signed permutation is $i_1 i_2 \cdots i_n$, we construct the symmetric sketch as follows:

\begin{enumerate}
    \item Extend the $\alpha,\beta$-word to the one of length $4n$ given by
    \begin{equation*}
        l_1 \quad l_2 \quad \cdots \quad l_{2n} \quad \overline{l_{2n}} \quad \cdots \quad \overline{l_2} \quad \overline{l_1}
    \end{equation*}
    where $\overline{l_i}=\alpha$ if and only if $l_i=\beta$ for all $i \in [2n]$.
    
    \item Extend the signed permutation to the sequence of length $2n$ given by
    \begin{equation*}
    i_1 \quad i_2 \quad  \cdots \quad  i_n \quad  -i_n \quad  \cdots \quad -i_2 \quad -i_1.
    \end{equation*}
    
    \item Label the subscripts of the $\alpha$-letters of the extended $\alpha,\beta$-word in the order given by the extended signed permutation and similarly label the $\beta$-letters.
\end{enumerate}

If we show that the word constructed is a symmetric sketch, it is clear that it will correspond to the given $\alpha,\beta$-word and signed permutation. 
We have to check that the constructed word satisfies the properties in \Cref{symsk}.

The way the word was constructed, we see that it is of the form
\begin{equation*}
    a_1 \quad a_2 \quad \cdots \quad a_{2n} \quad \overline{a_{2n}} \quad \cdots \quad \overline{a_2} \quad \overline{a_1}
\end{equation*}
where $a_i \in C(n)$ for all $i \in [2n]$. 
Since the conjugate of the $i^{th}$ $\alpha$ is the $(2n-i+1)^{th}$ $\beta$ and vice-versa, the first half of the word cannot have a pair of conjugates. 
Hence the word has all letters of $C(n)$. 
This shows that property (1) of \Cref{symsk} holds. 
Property (2) is taken care of since, by construction, the subscripts of the $\alpha$-letters appear in the same order as those of the $\beta$-letters.

To show that property (3) holds, it suffices to show that any prefix of the word has at least as many $\alpha$-letters as $\beta$-letters. 
This is already true for the first half. 
To show that this is true for the entire word, we consider $\alpha$ as $+1$ and $\beta$ as $-1$. 
Hence, the condition is that any prefix has a non-negative sum. 
Since any prefix of size greater than $2n$ is of the form
\begin{equation*}
    l_1 \quad l_2 \quad \cdots \quad l_{2n} \quad \overline{l_{2n}} \quad \cdots \quad \overline{l_k}
\end{equation*}
for some $k \in [2n]$, the sum is $l_1 + \cdots + l_{k-1} \geq 0$. 
So property (3) holds as well and hence the constructed word is a symmetric sketch.
\end{proof}

We use this description to count symmetric sketches.

\begin{lemma}\label{countabtypc}
The number of $\alpha,\beta$-words of length $2n$ having at least as many $\alpha$-letters as $\beta$-letters in any prefix is $\binom{2n}{n}$.
\end{lemma}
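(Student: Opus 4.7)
The plan is to prove this by the reflection principle, stratified by the total number of $\alpha$-letters in the word. Interpret $\alpha$ as $+1$ and $\beta$ as $-1$, so an $\alpha,\beta$-word of length $2n$ becomes a lattice path of length $2n$ starting at $0$ with $\pm 1$ steps; the prefix condition says the path never dips below $0$. For each $a$ with $n \leq a \leq 2n$, let $N(a)$ denote the number of good words using exactly $a$ $\alpha$-letters (and $2n-a$ $\beta$-letters). Note that if a prefix condition is to hold over all $2n$ positions then in particular at the end we need $a \geq 2n-a$, i.e., $a \geq n$, so only these values contribute.

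Next I would compute $N(a)$ by the standard reflection argument. The total number of words with $a$ $\alpha$-letters is $\binom{2n}{a}$. A word violates the prefix condition exactly when its path touches height $-1$; reflecting the portion of the path up to the first such touch interchanges $\alpha$ and $\beta$ there and produces a bijection between bad words and all paths of length $2n$ starting at $-2$ and ending at height $2a-2n$, equivalently all words of length $2n$ with $a+1$ many $\alpha$-letters. This yields
\[
N(a) \;=\; \binom{2n}{a} - \binom{2n}{a+1}.
\]

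Finally I would sum over $a$ and observe that the sum telescopes:
\[
\sum_{a=n}^{2n} N(a) \;=\; \sum_{a=n}^{2n} \left[\binom{2n}{a} - \binom{2n}{a+1}\right] \;=\; \binom{2n}{n} - \binom{2n}{2n+1} \;=\; \binom{2n}{n},
\]
since $\binom{2n}{2n+1}=0$. This gives the claimed count.

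There is no real obstacle; the only thing to be careful about is the bookkeeping in the reflection argument (which endpoint corresponds to which $a$), and making sure the telescoping range $a \in \{n,\ldots,2n\}$ exactly matches the values of $a$ for which good words can exist. A purely bijective proof is also possible (e.g., via the cycle lemma, exhibiting for each word of length $2n$ with $n$ $\alpha$s exactly one good word in its "shift class"), but the reflection/telescoping argument above is the shortest route.
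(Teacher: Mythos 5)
Your proof is correct and takes essentially the same approach as the paper: interpret the words as lattice paths, apply the reflection principle to count those that stay nonnegative and end at a fixed height, and sum the resulting telescoping series. The only cosmetic difference is that you index by the number $a$ of $\alpha$-letters while the paper indexes by the final height $2k$ (with $k = a - n$), which is an immaterial change of variable.
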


\begin{proof}
We consider these $\alpha, \beta$-words as lattice paths. 
Using the step $U = (1, 1)$ for $\alpha$ and the step $D = (1, -1)$ for $\beta$, we have to count those lattice paths with each step $U$ or $D$ that start at the origin, have $2n$ steps, and never fall below the $x$-axis.

Using the reflection principle (for example, see \cite{Hilton1991CatalanNT}), we get that the number of such lattice paths that end at $(2n, 2k)$ for $k \in [0, n]$ is given by
\begin{equation*}
    \binom{2n}{n + k} - \binom{2n}{n + k + 1}.
\end{equation*}
The (telescoping) sum over $k \in [0, n]$ gives the required result.
\end{proof}

The above lemma and \Cref{absp} immediately give the following.

\begin{theorem}\label{thmtypeccatalannumber}
The number of symmetric sketches and hence regions of $\C_n$ is
\begin{equation*}
    2^nn!\binom{2n}{n}.
\end{equation*}
\end{theorem}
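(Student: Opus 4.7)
The plan is to combine \Cref{absp} and \Cref{countabtypc} directly. By \Cref{absp}, the map sending a symmetric sketch to the pair consisting of its $\alpha,\beta$-word together with the signed permutation encoded by its first $n$ $\alpha$-letters is a bijection between symmetric sketches and pairs
\[
(w,\sigma)
\]
where $w$ is an $\alpha,\beta$-word of length $2n$ in which every prefix contains at least as many $\alpha$'s as $\beta$'s, and $\sigma$ is an arbitrary signed permutation of $[n]$.

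By \Cref{countabtypc}, the number of admissible $\alpha,\beta$-words is $\binom{2n}{n}$. The number of signed permutations of $[n]$ is $2^n n!$, since one chooses an ordinary permutation of $[n]$ in $n!$ ways and then independently assigns a sign in $\{+,-\}$ to each of the $n$ entries. Since the two ingredients in the pair are chosen independently, the product principle yields
\[
2^n n! \binom{2n}{n}
\]
symmetric sketches in total. Finally, \Cref{pointconst} identifies symmetric sketches with regions of $\C_n$, completing the count.

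There is no real obstacle here: the combinatorial content has already been extracted by \Cref{pointconst}, \Cref{absp}, and \Cref{countabtypc}. The only thing worth spelling out is the independence of the two coordinates of the pair, which is immediate from the construction in the proof of \Cref{absp}: the $\alpha,\beta$-word determines the pattern of slots for $\alpha$-letters and $\beta$-letters in the first half, while the signed permutation fills those slots with labels, and any choice of signed permutation produces a valid labelling.
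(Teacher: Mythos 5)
Your proof is correct and follows the paper exactly: the paper derives the count as an immediate consequence of Proposition~\ref{absp} (the bijection with pairs of an $\alpha,\beta$-word and a signed permutation) and Lemma~\ref{countabtypc}, with Proposition~\ref{pointconst} supplying the identification with regions. You have simply written out the product-principle step that the paper leaves implicit.
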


In \cite{ath_non}, Athanasiadis obtains bijections between several classes of non-nesting partitions and regions of certain arrangements. 
We will mention the one for the arrangement $\C_n$, which gives a bijection between the $\alpha,\beta$-words associated to symmetric sketches and certain non-nesting partitions.

\begin{definition}\label{symnonnest}
A \emph{symmetric non-nesting partition} is a partition of $[-2n,2n] \setminus \{0\}$ such that the following hold:
\begin{enumerate}
    \item Each block is of size $2$.
    
    \item If $B = \{a, b\}$ is a block, so is $-B = \{-a, -b\}$.
    
    \item If $\{a, b\}$ is a block and $c, d \in [-2n, 2n] \setminus \{0\}$ are such that $a < c < d < b$, then $\{c, d\}$ is not a block.
\end{enumerate}
\end{definition}

Symmetric non-nesting partitions are usually represented using arc-diagrams. 
This is done by using $4n$ dots to represent the numbers in $[-2n, 2n] \setminus \{0\}$ in order and joining dots in the same block using an arc. 
The properties of these partitions imply that there are no nesting arcs and that the diagram is symmetric, which we represent by drawing a line after $2n$ dots.

\begin{example}\label{arcex}
The arc diagram associated to the symmetric non-nesting partition of $[-6, 6] \setminus \{0\}$
\begin{equation*}
    \{-6, -3\}, \{-5, -1\}, \{-4, 2\}, \{-2, 4\}, \{1, 5\}, \{3, 6\}
\end{equation*}
is given in \Cref{arcdiag}.
\end{example}

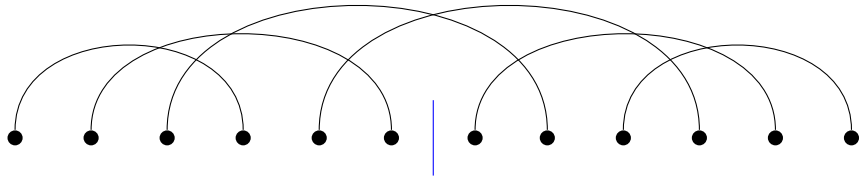
\begin{figure}[H]
    \centering
    \begin{tikzpicture}
        \node [circle, fill = black, inner sep = 2pt] (-6) at (-6+0.5,0) {};
        \node [circle, fill = black, inner sep = 2pt] (-5) at (-5+0.5,0) {};
        \node [circle, fill = black, inner sep = 2pt] (-4) at (-4+0.5,0) {};
        \node [circle, fill = black, inner sep = 2pt] (-3) at (-3+0.5,0) {};
        \node [circle, fill = black, inner sep = 2pt] (-2) at (-2+0.5,0) {};
        \node [circle, fill = black, inner sep = 2pt] (-1) at (-1+0.45,0) {};
        \draw [blue] (0,-0.5)--(0,0.5);
        \node [circle, fill = black, inner sep = 2pt] (1) at (1-0.45,0) {};
        \node [circle, fill = black, inner sep = 2pt] (2) at (2-0.5,0) {};
        \node [circle, fill = black, inner sep = 2pt] (3) at (3-0.5,0) {};
        \node [circle, fill = black, inner sep = 2pt] (4) at (4-0.5,0) {};
        \node [circle, fill = black, inner sep = 2pt] (5) at (5-0.5,0) {};
        \node [circle, fill = black, inner sep = 2pt] (6) at (6-0.5,0) {};
        \draw (-6.north)..controls +(up:15mm) and +(up:15mm)..(-3.north);
        \draw (6.north)..controls +(up:15mm) and +(up:15mm)..(3.north);
        \draw (-4.north)..controls +(up:22mm) and +(up:22mm)..(2.north);
        \draw (-2.north)..controls +(up:22mm) and +(up:22mm)..(4.north);
        \draw (-5.north)..controls +(up:17mm) and +(up:17mm)..(-1.north);
        \draw (5.north)..controls +(up:17mm) and +(up:17mm)..(1.north);
    \end{tikzpicture}
    \caption{The symmetric non-nesting partition of \Cref{arcex}.}
    \label{arcdiag}
\end{figure}

It can also be seen that there are exactly $n$ pairs of blocks of the form $\{B,-B\}$ with no block containing both a number and its negative. 
Also, the first $n$ blocks, with blocks being read in order of the smallest element in it, do not have a pair of the form $\{B,-B\}$. 
Hence, we can label the first $n$ blocks with a signed permutation and label the block $-B$ with the negative of the label of $B$ to obtain a labeling of all blocks. 
We call such objects \emph{labeled symmetric non-nesting partitions}. 
In the arc diagram, the labeling is done by replacing the dots representing the elements in a block with its label.

We can obtain a labeled symmetric non-nesting partition from a symmetric sketch by joining the letters \lt{i}{0} and \lt{i}{1} and similarly \lt{-i}{-1} and \lt{-i}{0} with arcs and replacing each letter in the sketch with its subscript. 
It can be shown that this construction is a bijection between symmetric sketches and labeled symmetric non-nesting partitions. 
In particular, the $\alpha,\beta$-words associated with symmetric sketches are in bijection with symmetric non-nesting partitions.

\begin{example}\label{slnnpex}
To the symmetric sketch
\begin{equation*}
    \lt{3}{0} \lt{2}{0} \lt{-1}{-1} \lt{3}{1} \lt{1}{0} \lt{2}{1} \textcolor{blue}{|} \lt{-2}{-1} \lt{-1}{0} \lt{-3}{-1} \lt{1}{1} \lt{-2}{0} \lt{-3}{0}
\end{equation*}
we associate the labeled symmetric non-nesting partition in \Cref{slnnp}.
\end{example}

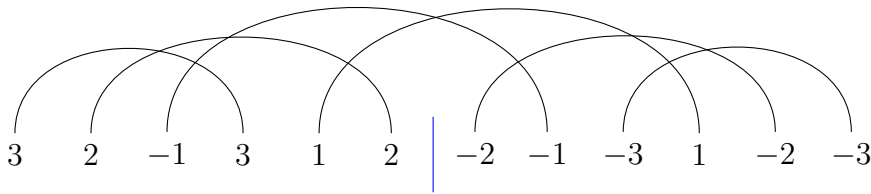
\begin{figure}[H]
    \centering
    \begin{tikzpicture}
        \node (-6) at (-6+0.5,0) {$3$};
        \node (-5) at (-5+0.5,0) {$2$};
        \node (-4) at (-4+0.5,0) {$-1$};
        \node (-3) at (-3+0.5,0) {$3$};
        \node (-2) at (-2+0.5,0) {$1$};
        \node (-1) at (-1+0.45,0) {$2$};
        \draw [blue] (0,-0.5)--(0,0.5);
        \node (1) at (1-0.45,0) {$-2$};
        \node (2) at (2-0.5,0) {$-1$};
        \node (3) at (3-0.5,0) {$-3$};
        \node (4) at (4-0.5,0) {$1$};
        \node (5) at (5-0.5,0) {$-2$};
        \node (6) at (6-0.5,0) {$-3$};
        \draw (-6.north)..controls +(up:15mm) and +(up:15mm)..(-3.north);
        \draw (6.north)..controls +(up:15mm) and +(up:15mm)..(3.north);
        \draw (-4.north)..controls +(up:22mm) and +(up:22mm)..(2.north);
        \draw (-2.north)..controls +(up:22mm) and +(up:22mm)..(4.north);
        \draw (-5.north)..controls +(up:17mm) and +(up:17mm)..(-1.north);
        \draw (5.north)..controls +(up:17mm) and +(up:17mm)..(1.north);
    \end{tikzpicture}
    \caption{Arc diagram associated to the symmetric sketch in \Cref{slnnpex}.}
    \label{slnnp}
\end{figure}

We now describe another way to represent the regions. 
We have already seen that a sketch corresponds to a pair consisting of an $\alpha,\beta$-word and a signed permutation. 
We represent the $\alpha,\beta$-word as a lattice path just as we did in the proof of \Cref{countabtypc}. 
% This is done by starting at the origin, reading the $\alpha,\beta$-word, and moving by an up-step $(1,1)$ if the letter read is $\alpha$ and a down-step $(1,-1)$ if the letter read is $\beta$. 
We specify the signed permutation by labeling the first $n$ up-steps of the lattice path.

\begin{example}\label{sketchtolatticex}
The lattice path associated to the symmetric sketch
\begin{equation*}
    \alpha_{-3}^{(-1)} \ \alpha_{-3}^{(0)} \ \alpha_{1}^{(0)} \ \alpha_{-2}^{(-1)} \ \alpha_{1}^{(1)} \ \alpha_{2}^{(0)} \ \textcolor{blue}{|}\ \alpha_{-2}^{(0)} \ \alpha_{-1}^{(-1)} \ \alpha_{2}^{(1)} \ \alpha_{-1}^{(0)} \ \alpha_{3}^{(0)} \ \alpha_{3}^{(1)}
\end{equation*}
is given in \Cref{latticeexample}.
\end{example}

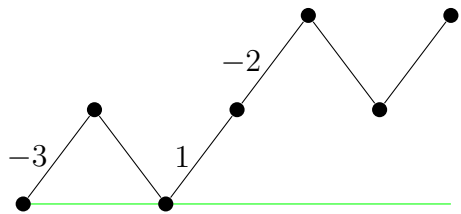
\begin{figure}[H]
    \centering
    \begin{tikzpicture}[xscale=0.75,scale=1.25]
    \draw [thin,green] (0,0)--(6,0);
    \node (0) [circle,inner sep=2pt,fill=black] at (0,0) {};
    \node (1) [circle,inner sep=2pt,fill=black] at (1,1) {};
    \node (2) [circle,inner sep=2pt,fill=black] at (2,0) {};
    \node (3) [circle,inner sep=2pt,fill=black] at (3,1) {};
    \node (4) [circle,inner sep=2pt,fill=black] at (4,2) {};
    \node (5) [circle,inner sep=2pt,fill=black] at (5,1) {};
    \node (6) [circle,inner sep=2pt,fill=black] at (6,2) {};
    \draw (0)--node[left]{$-3$}(1)--(2)--node[left]{$1$}(3)--node[left]{$-2$}(4)--(5)--(6);
    \end{tikzpicture}
    \caption{Lattice path associated to the symmetric sketch in \Cref{sketchtolatticex}.}
    \label{latticeexample}
\end{figure}

These representations for the regions of $\C_n$ also allow us to determine and count which regions are bounded.

\begin{theorem}\label{typeCbdd}
The number of bounded regions of the arrangement $\C_n$ is
\begin{equation*}
    2^nn!\binom{2n - 1}{n}.
\end{equation*}
\end{theorem}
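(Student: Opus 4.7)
My plan is to characterize the bounded regions of $\C_n$ via the lattice-path representation just introduced, and then to count. I claim that a region is bounded if and only if the associated lattice path is strictly above the $x$-axis at each of the positions $1, 2, \ldots, 2n$.

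To reach this characterization, I analyze the recession cone of a region. Extend $v \in \R^n$ to a function on subscripts by $v_{-i} := -v_i$, let $j_\ell$ be the subscript of the $\ell$th letter of the sketch, and set $c_\ell := v_{j_\ell}$. Then $v$ lies in the recession cone of the region iff the sequence $(c_1, \ldots, c_{4n})$ is weakly increasing, since that is the exact condition for $x \mapsto x + tv$ with $t \geq 0$ to preserve every adjacent inequality in the sketch. The mirror property (\Cref{mir}) forces $c_{4n+1-\ell} = -c_\ell$, so $c_{2n} \leq 0 \leq c_{2n+1}$.

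Suppose $v \neq 0$ is a recession direction, and let $k_0$ be the number of positions with $c_\ell < 0$; then $1 \leq k_0 \leq 2n$. The set $T := \{j : v_j < 0\}$ is automatically disjoint from $-T$ (because $v_{-j} = -v_j$), and by monotonicity of $(c_\ell)$ these are precisely the subscripts occurring at positions $1, \ldots, k_0$. Both occurrences of any $j \in T$ must therefore lie in those first $k_0$ positions, giving $|T| = k_0/2$; combined with \Cref{ord}, the first $k_0$ positions contain equal numbers of $\alpha$- and $\beta$-letters, so the path has height $0$ at position $k_0$. Conversely, given height $0$ at some position $k \leq 2n$, the same analysis in reverse produces a set $T$ of size $k/2$ with $T \cap (-T) = \emptyset$; defining $v_i = -1$ if $i \in T$, $v_i = +1$ if $-i \in T$, and $v_i = 0$ otherwise, one verifies directly that under $x \mapsto x + tv$ the letters at the first $k$ positions decrease uniformly by $t$, those at the last $k$ positions increase uniformly by $t$ (by mirror symmetry their subscripts are exactly $-T$), and the middle letters are unchanged. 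Hence the sketch order is preserved, $v \neq 0$ is a recession direction, and the region is unbounded.

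Counting the bounded regions is then straightforward: they are in bijection with pairs consisting of a non-negative lattice path of length $2n$ that is strictly positive at each of positions $1, \ldots, 2n$, together with a signed permutation of length $n$. Such a path must begin with an up-step; removing this step and shifting the height down by one gives a bijection with unrestricted non-negative lattice paths of length $2n-1$, and there are $\binom{2n-1}{n-1} = \binom{2n-1}{n}$ of these. Multiplying by the $2^n n!$ signed permutations yields the claimed count $2^n n! \binom{2n-1}{n}$. The main obstacle I expect is making the recession-cone correspondence precise --- in particular, using the mirror property and \Cref{ord} to translate the combinatorial statement ``height $0$ at some position in $\{1, \ldots, 2n\}$'' into the algebraic statement that the candidate direction $v$ is well-defined (via $T \cap (-T) = \emptyset$) and genuinely compatible with the entire sketch order, not merely with inequalities among letters whose subscripts lie in $T$.
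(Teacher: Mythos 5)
Your proof is correct, and its combinatorial core is the same as the paper's: bounded regions of $\C_n$ correspond to lattice paths that stay strictly above the $x$-axis after the origin, and deleting the forced initial up-step biject these with non-negative paths of length $2n-1$, of which there are $\binom{2n-1}{n}$. Where you diverge is in \emph{justifying} the boundedness criterion. The paper argues via the arc-diagram picture (plotting $x_i$, $x_i+1$, $-x_i-1$, $-x_i$ on the real line and observing that a region is bounded iff the resulting diagram is a single interlinked component), but it does not carry this out in detail. You instead compute the recession cone directly: the monotonicity of $(c_\ell)$ along the sketch order, the mirror relation $c_{4n+1-\ell}=-c_\ell$, and the fact that each subscript occurs twice together force a nonzero recession direction to exist exactly when the path has a return to height $0$ at some position $k\leq 2n$, with the explicit $\{-1,0,+1\}$-valued direction $v$ built from the set $T$ of subscripts in the first $k$ positions serving as a witness in the converse. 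This is a more rigorous and more self-contained argument than the paper's appeal to arc-diagram intuition, and it makes explicit the role of \Cref{mir} in guaranteeing $T\cap(-T)=\emptyset$ so that $v$ is well-defined. One small point you leave implicit is that $\C_n$ is essential (rank $n$), so that ``bounded'' in the sense of \Cref{zaslavsky} coincides with genuine boundedness; the paper notes this at the outset and it is worth stating since your recession-cone criterion detects genuine boundedness.
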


\begin{proof}
First note that the arrangement $\C_n$ has rank $n$ and is hence essential. 
From the bijection defined above, it can be seen that the arc diagram associated to any region $R$ of $\C_n$ can be obtained by plotting a point $(x_1, \ldots, x_n) \in R$ on the real line. 
This is done by marking $x_i$ and $x_i + 1$ on the real line using $i$ for all $i \in [n]$ and then joining them with an arc and similarly marking $-x_i -1$ and $-x_i$ using $-i$ and joining them with an arc.

This can be used to show that a region of $\C_n$ is bounded if and only if the arc diagram is `interlinked'. 
For example, \Cref{slnnp} shows an arc diagram that is interlinked and \Cref{notinterlinked} shows one that is not. 
In terms of lattice paths, the bounded regions are those whose corresponding lattice path never touches the $x$-axis except at the origin.

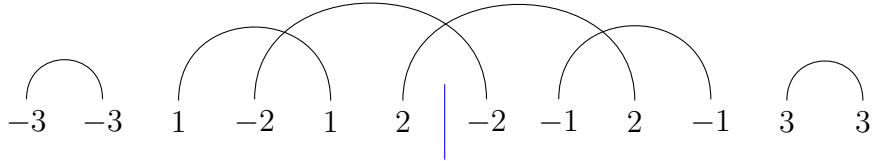
\begin{figure}[H]
    \centering
    \begin{tikzpicture}
        \node (-6) at (-6+0.5,0) {$-3$};
        \node (-5) at (-5+0.5,0) {$-3$};
        \node (-4) at (-4+0.5,0) {$1$};
        \node (-3) at (-3+0.5,0) {$-2$};
        \node (-2) at (-2+0.5,0) {$1$};
        \node (-1) at (-1+0.45,0) {$2$};
        \draw [blue] (0,-0.5)--(0,0.5);
        \node (1) at (1-0.45,0) {$-2$};
        \node (2) at (2-0.5,0) {$-1$};
        \node (3) at (3-0.5,0) {$2$};
        \node (4) at (4-0.5,0) {$-1$};
        \node (5) at (5-0.5,0) {$3$};
        \node (6) at (6-0.5,0) {$3$};
        \draw (-6.north)..controls +(up:7mm) and +(up:7mm)..(-5.north);
        \draw (6.north)..controls +(up:7mm) and +(up:7mm)..(5.north);
        \draw (-4.north)..controls +(up:13mm) and +(up:13mm)..(-2.north);
        \draw (4.north)..controls +(up:13mm) and +(up:13mm)..(2.north);
        \draw (-3.north)..controls +(up:17mm) and +(up:17mm)..(1.north);
        \draw (3.north)..controls +(up:17mm) and +(up:17mm)..(-1.north);
    \end{tikzpicture}
    \caption{Arc diagram associated to the symmetric sketch of \Cref{sketchexample}.}
    \label{notinterlinked}
\end{figure}

This shows that the number of bounded regions of $\C_n$ is $2^nn!$ times the number of unlabeled lattice paths of length $2n$ that never touch the $x$-axis apart from at the origin. 
Deleting the first step (which is necessarily an up-step) gives a bijection between such paths and those of length $2n - 1$ that never fall below the $x$-axis. 
Using the same idea as in the proof of \Cref{countabtypc}, it can be checked that the number of such paths is $\binom{2n - 1}{n}$. 
This proves the required result.
\end{proof}

% \textbf{For type C}: In terms of non-nesting partitions, it is those that are interlinked (`connected'). 
% In terms of the $\alpha,\beta$-words, it is those words whose corresponding path do not touch the $x$-axis except at the beginning.
% These can be counted as done for the statistic and give us the required number of bounded regions.
% \textbf{For type D}: The type D regions that are bounded are exactly those that contain only bounded type C regions (since ranks are same). 
% Such type D regions can also be counted to give the required expression. 
% The only problematic bounded type C regions are those that end with an up step at height $2$ which are in bijection with Dyck paths of semilength $n - 1$. 
% This gives number of bounded type D regions as
% \begin{equation*}
%     2^n n! \left(\frac{3}{8}\left(\binom{2n - 1}{n} + \frac{1}{n} \binom{2n - 2}{n - 1}\right) - \frac{1}{n}\binom{2n - 2}{n - 1}\right).
% \end{equation*}

\begin{remark}
In \cite{typc}, the authors study the type $C$ Catalan arrangement directly, i.e., without using the translation $\C_n$ mentioned above. 
Hence, using the same logic, they use orders on the letters
\begin{equation*}
    \{\lt{i}{s} \mid i \in [-n,n] \setminus \{0\},\ s \in \{0,1\}\}
\end{equation*}
to represent the regions of the type $C$ Catalan arrangement. 
They claim that these orders are those such that the following hold for any $i,j \in [-n,n] \setminus \{0\}$ and $s \in \{0, 1\}$:
\begin{enumerate}
    \item If $\alpha_i^{(0)}$ appear before $\alpha_j^{(0)}$, then $\alpha_i^{(1)}$ appears before $\alpha_j^{(1)}$.
    
    \item $\alpha_i^{(0)}$ appears before $\alpha_i^{(1)}$.
    
    \item If $\alpha_i^{(0)}$ appears before $\alpha_j^{(s)}$, then $\alpha_{-j}^{(0)}$ appears before ${\alpha_{-i}^{(s)}}$.
\end{enumerate}
Though this can be shown to be true, the method used in \cite{typc} to construct a point satisfying the inequalities given by such an order does not seem to work in general. 
We describe their method and then exhibit a case where it does not work.

Let $w=w_1 \cdots w_{4n}$ be an order satisfying the properties given above. 
Then construct $\boldsymbol{x}=(x_1, \ldots, x_n) \in \mathbb{R}^n$ as follows:
Let $z_0=0$ (or pick $z_0$ arbitrarily).
Then define $z_p$ for $p=1, 2, \ldots, 4n$ in order as follows:
If $w_p=\alpha_i^{(0)}$ then set $z_p=z_{p-1}+\frac{1}{2n+1}$ and $x_i=z_p$, and if $w_p=\alpha_i^{(1)}$ then set $z_p=x_i+1$.
Here we consider $x_{-i}=-x_i$ for any $i \in [n]$.
Then $\boldsymbol{x}$ satisfies the inequalities given by $w$.

The following example shows that this method does not always work; in fact $\boldsymbol{x}$ is not always well-defined.
Consider the order $w=\lt{-2}{0}\lt{1}{0}\lt{-2}{1}\lt{1}{1}\lt{-1}{0}\lt{2}{0}\lt{-1}{1}\lt{2}{1}$.
Following the above procedure, we would get that $x_1$ is both $\frac{2}{5}$ as well as $-1-\frac{3}{5}$.
\end{remark}

\subsection{Extended type C Catalan}\label{extc}

Fix $m, n \geq 1$. 
The type $C$ $m$-Catalan arrangement in $\R^n$ has hyperplanes
\begin{align*}
    2X_i &= 0, \pm 1, \pm2, \ldots, \pm m\\
    X_i+X_j &= 0, \pm 1, \pm2, \ldots, \pm m\\
    X_i-X_j &= 0, \pm 1, \pm2, \ldots, \pm m
\end{align*}
for all $1 \leq i < j \leq n$.
We will study the arrangement obtained by performing the translation $X_i=x_i+\frac{m}{2}$ for all $i \in [n]$.
The translated arrangement, which we call $\C_n^{(m)}$, has hyperplanes
\begin{align*}
    2x_i &= -2m, -2m + 1, \ldots, 0\\
    x_i+x_j &= -2m, -2m + 1, \ldots, 0\\
    x_i-x_j &= 0, \pm 1,\pm2, \ldots, \pm m
\end{align*}
for all $1 \leq i < j \leq n$. 
Note that $\C_n = \C_n^{(1)}$. 
The regions of $\C_n^{(m)}$ are given by valid total orders on
\begin{equation*}
    \{x_i+s \mid i \in [n],\ s \in [0,m]\} \cup \{-x_i-s \mid i \in [n],\ s \in [0,m]\}.
\end{equation*}

Just as we did for $\C_n$, such orders will be represented by using the symbol \lt{i}{s} for $x_i+s$ and \lt{-i}{-s} for $-x_i-s$ for all $i \in [n]$ and $s \in [0,m]$. 
Let $C^{(m)}(n)$ be the set
\begin{equation*}
    \{\lt{i}{s}\mid i \in [n],\ s \in [0,m]\} \cup \{\alpha_i^{(s)} \mid -i \in [n],\ s \in [-m,0]\}.
\end{equation*}
For any $\lt{i}{s} \in C^{(m)}(n)$, $\overline{\lt{i}{s}}$ represents \lt{-i}{-s} and is called the conjugate of \lt{s}{i}. 
Letters of the form \lt{i}{0} or \lt{-i}{-m} for any $i \in [n]$ are called $\alpha$-letters. 
The others are called $\beta$-letters.

\begin{definition}
An order on the letters in $C^{(m)}(n)$ is called a \emph{symmetric $m$-sketch} if the following hold for all $\lt{i}{s}, \lt{j}{t} \in C^{(m)}(n)$:
\begin{enumerate}
    \item If $\alpha_i^{(s)}$ appears before $\alpha_j^{(t)}$, then $\overline{\alpha_j^{(t)}}$ appears before $\overline{\alpha_i^{(s)}}$.
    
    \item If $\alpha_i^{(s-1)}$ appears before $\alpha_j^{(t-1)}$, then $\alpha_i^{(s)}$ appears before $\alpha_j^{(t)}$.
    
    \item $\alpha_i^{(s-1)}$ appears before $\alpha_i^{(s)}$.
\end{enumerate}
\end{definition}

The following result can be proved just as \Cref{pointconst}.

\begin{proposition}
An order on the letters in $C^{(m)}(n)$ corresponds to a region of $\C^{(m)}_n$ if and only if it is a symmetric $m$-sketch.
\end{proposition}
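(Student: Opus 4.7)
The forward direction is routine: if $\boldsymbol{x} = (x_1, \ldots, x_n)$ lies in a region of $\C_n^{(m)}$ and induces the order $w$ on $C^{(m)}(n)$, then property (1) follows from negation reversing inequalities, property (2) from adding $1$ to both sides of an inequality, and property (3) from $s > s-1$.

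For the converse, I will mirror the proof of \Cref{pointconst} by induction on $n$. The case $n = 1$ is immediate, since properties (1)--(3) force the sketch up to the location of $0$, and $x_1$ may be chosen accordingly. For the inductive step, let $w$ be a symmetric $m$-sketch on $C^{(m)}(n)$. After relabeling, assume its first letter is $\alpha_n^{(0)}$. Deleting all letters with subscript $\pm n$ yields a symmetric $m$-sketch $w'$ on $C^{(m)}(n-1)$, which is realized by some $\boldsymbol{x'} \in \R^{n-1}$ by the inductive hypothesis.

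I will then pick $x_n$ so that $x_n + m$ lies strictly between the real values (under $\boldsymbol{x'}$) of the two $w$-neighbors of $\alpha_n^{(m)}$, on the side of $0$ dictated by $w$; this is feasible since $\boldsymbol{x'}$ satisfies $w'$. The main work, which I anticipate to be the main obstacle, is to verify that this single choice automatically places $x_n + k$ in its correct slot for every $k \in [0, m-1]$, and on the correct side of $0$. The key tool is property (2): for a letter $\alpha_j^{(t)}$ with $j \neq \pm n$, if $\alpha_j^{(t)}$ appears before $\alpha_n^{(k)}$ in $w$, then applying (2) backwards $k$ times together with the fact that $\alpha_n^{(0)}$ is first forces $t \leq k$, after which a forward shift by $m-k$ derives $x'_j + t < x_n + k$ from the already-ensured inequality $x'_j + (t + m - k) < x_n + m$. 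The ``after'' case with $t \leq k$ is analogous; the ``after'' case with $t > k$ follows from the auxiliary inequality $x_n < x'_j$, obtained by shifting $\alpha_n^{(0)}$ before $\alpha_j^{(0)}$ all the way up via property (2) to $\alpha_n^{(m)}$ before $\alpha_j^{(m)}$.

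The subtlest points are the cases with negative subscripts and the correct-side-of-$0$ condition. The former reduces to the positive case by applying property (1) and conjugating. For the latter, I will use the observation that the relative order in $w$ of $\alpha_n^{(k)}$ and $\alpha_{-n}^{(-l)}$ depends only on the sum $k+l$ (since shifting both indices by the same amount preserves order by property (2) and its reverse), so these orderings define a single threshold for $x_n$ which is automatically matched once $x_n + m$ is placed correctly relative to every $\alpha_{-n}^{(-l)}$. The rest of the argument is bookkeeping parallel to that in \Cref{pointconst}.
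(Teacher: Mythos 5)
Your approach mirrors the paper's, which gives no explicit argument for the $m$-analogue and simply defers to \Cref{pointconst}; you have correctly isolated the genuinely new ingredient for $m > 1$, namely placing $x_n + m$ by hand and then propagating correctness of $x_n + k$ for $k < m$ via iterated applications of property (2), together with the auxiliary bound $x_n < x'_j$ obtained by shifting $\lt{n}{0}$ before $\lt{j}{0}$ all the way up to $\lt{n}{m}$ before $\lt{j}{m}$. The positive-subscript bookkeeping is sound (the downward shift actually forces $t < k$ rather than $t \le k$, but $t \le k$ is all that is needed for the upward shift, so this is harmless).

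The one step that does not go through as written is the claim that comparisons of $x_n + k$ with negative-subscripted values $-x'_j - t'$ (for $j \in [n-1]$, $t' \in [0,m]$) ``reduce to the positive case by applying property (1) and conjugating.'' Property (1) says the relative order of $\lt{n}{k}$ and $\lt{-j}{-t'}$ is the reverse of that of $\lt{j}{t'}$ and $\lt{-n}{-k}$; but the position of $\lt{-n}{-k}$ is exactly what property (1) is supposed to hand you for free once all the $\lt{n}{k}$ are verified, so this is circular rather than a reduction. The negative-subscript comparisons need their own parallel shift argument: if $\lt{-j}{-t'}$ precedes $\lt{n}{k}$, shifting down forces $t' + k > m$ (otherwise one reaches $\lt{-j}{-t'-k}$ before $\lt{n}{0}$, contradicting that $\lt{n}{0}$ is first), and then shifting up by $m - k$ to $\lt{-j}{-t'+m-k}$ before $\lt{n}{m}$ is legal since $-t'+m-k \le 0$; if $\lt{-j}{-t'}$ follows $\lt{n}{k}$ with $t' + k \ge m$, the same upward shift applies; and if $t' + k < m$, use the auxiliary $x_n + m < -x'_j$, which follows by shifting $\lt{n}{0}$ before $\lt{-j}{-m}$ up to $\lt{n}{m}$ before $\lt{-j}{0}$, giving $x_n + k < -x'_j - (m-k) < -x'_j - t'$. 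This is a local repair, structurally identical to your positive-subscript case, but ``conjugating'' alone does not discharge it.
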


Similar to Lemma \ref{ord}, it can be shown that the order in which the subscripts of the $\alpha$-letters appear in a symmetric $m$-sketch is of the form
\begin{equation*}
    i_1\quad i_2 \quad  \cdots\quad  i_n\quad  -i_n\quad  \cdots \quad -i_2 \quad -i_1
\end{equation*}
where $\{|i_1|,\ldots,|i_n|\}=[n]$. 
Just as in the case of symmetric sketches, we associate an $\alpha,\beta$-word and signed permutation to a symmetric $m$-sketch which completely determines it.

\begin{example}\label{sym2ex}
To the symmetric 2-sketch
\begin{equation*}
    \alpha_{2}^{(0)}\alpha_{-1}^{(-2)}\alpha_{2}^{(1)}\alpha_{-1}^{(-1)}\alpha_{1}^{(0)}\alpha_{-2}^{(-2)}\ \textcolor{blue}{|}\ \alpha_{2}^{(2)}\alpha_{-1}^{(0)}\alpha_{1}^{(1)}\alpha_{-2}^{(-1)}\alpha_{1}^{(2)}\alpha_{-2}^{(0)}
\end{equation*}
we associate the pair consisting of the following:
\begin{enumerate}
    \item $\alpha,\beta$-word: $\alpha\alpha\beta\beta\alpha\alpha$.
    \item Signed permutation: $2\ \ -1$.
\end{enumerate}
\end{example}

The set of $\alpha,\beta$-words associated to symmetric $m$-sketches for $m>1$ does not seem to have a simple characterization like those for symmetric sketches (see Proposition \ref{absp}). 
However, looking at symmetric $m$-sketches as labeled non-nesting partitions as done in \cite{ath_non}, we see that such objects have already been counted bijectively (refer \cite{non_bij}).

\begin{definition}
A \emph{symmetric $m$-non-nesting partition} is a partition of $[-(m+1)n,(m+1)n] \setminus \{0\}$ such that the following hold:
\begin{enumerate}
    \item Each block is of size $(m+1)$.
    \item If $B$ is a block, so is $-B$.
    \item If $a,b$ are in some block $B$, $a<b$ and there is no number $a<c<b$ such that $c \in B$, then if $a<c<d<b$, $c$ and $d$ are not in the same block.
\end{enumerate}
\end{definition}

Just as we did for the $m = 1$ case, we can obtain a labeled symmetric $m$-non-nesting partition from a symmetric $m$-sketch by joining the letters $\lt{i}{0}, \lt{i}{1}, \ldots, \lt{i}{m}$ and similarly $\lt{-i}{-m}, \lt{-i}{-m + 1}, \ldots, \lt{-i}{0}$ with arcs and labeling each such chain with the subscript of the letters being joined.

\begin{example}
To the symmetric 2-sketch in \Cref{sym2ex}, we associate the labeled 2-non-nesting partition of Figure \ref{slnnp2}.
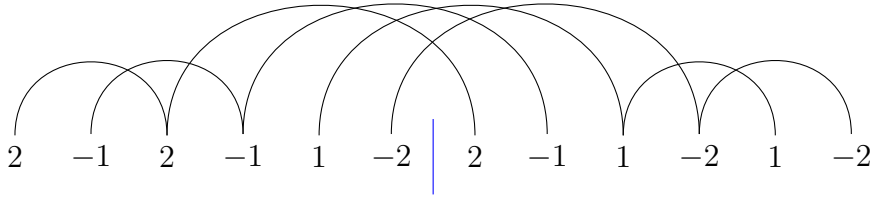
\begin{figure}[H]
    \centering
    \begin{tikzpicture}
        \node (-6) at (-6+0.5,0) {$2$};
        \node (-5) at (-5+0.5,0) {$-1$};
        \node (-4) at (-4+0.5,0) {$2$};
        \node (-3) at (-3+0.5,0) {$-1$};
        \node (-2) at (-2+0.5,0) {$1$};
        \node (-1) at (-1+0.45,0) {$-2$};
        \draw [blue] (0,-0.5)--(0,0.5);
        \node (1) at (1-0.45,0) {$2$};
        \node (2) at (2-0.5,0) {$-1$};
        \node (3) at (3-0.5,0) {$1$};
        \node (4) at (4-0.5,0) {$-2$};
        \node (5) at (5-0.5,0) {$1$};
        \node (6) at (6-0.5,0) {$-2$};
        \draw (-6.north)..controls +(up:13mm) and +(up:13mm)..(-4.north);
        \draw (-4.north)..controls +(up:23mm) and +(up:23mm)..(1.north);
        \draw (6.north)..controls +(up:13mm) and +(up:13mm)..(4.north);
        \draw (4.north)..controls +(up:23mm) and +(up:23mm)..(-1.north);
        \draw (-5.north)..controls +(up:13mm) and +(up:13mm)..(-3.north);
        \draw (-3.north)..controls +(up:23mm) and +(up:23mm)..(2.north);
        \draw (5.north)..controls +(up:13mm) and +(up:13mm)..(3.north);
        \draw (3.north)..controls +(up:23mm) and +(up:23mm)..(-2.north);
    \end{tikzpicture}
    \caption{A labeled 2-non-nesting partition}
    \label{slnnp2}
\end{figure}
\end{example}

The number of various classes of non-nesting partitions have been counted bijectively. 
In terms of \cite{non_bij} or \cite{ath_non}, the symmetric $m$-non-nesting partitions defined above are called type $C$ partitions of size $(m+1)n$ of type $(m+1,\ldots,m+1)$ where this is an $n$-tuple representing the size of the (nonzero) block pairs $\{B,-B\}$.
The number of such partitions is
\begin{equation*}
    \binom{(m+1)n}{n}.
\end{equation*}
Hence we get the following theorem.

\begin{theorem}
The number of symmetric $m$-sketches, which is the number of regions of $\C_n^{(m)}$ is
\begin{equation*}
    2^nn!\binom{(m+1)n}{n}.
\end{equation*}
\end{theorem}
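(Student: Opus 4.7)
The plan is to reduce the count to a product of two already-established quantities: the number of signed permutations of $[n]$ and the number of (unlabeled) symmetric $m$-non-nesting partitions. The bijection sketched just before the statement, which sends a symmetric $m$-sketch to a labeled symmetric $m$-non-nesting partition by joining $\alpha_i^{(0)}, \alpha_i^{(1)}, \ldots, \alpha_i^{(m)}$ (and their conjugates) into a chain labeled by the subscript, is the bridge between these two combinatorial families.

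First, I would verify (mirroring the argument given for the $m=1$ case) that the chain-joining construction is a bijection between symmetric $m$-sketches and labeled symmetric $m$-non-nesting partitions. Forgetting the labels on such a partition yields an unlabeled symmetric $m$-non-nesting partition; conversely, given an unlabeled symmetric $m$-non-nesting partition, its $n$ block pairs $\{B, -B\}$ can be canonically ordered (for instance, by the position of the first occurrence of $B$ in the first half of the arc diagram), so a labeling amounts to specifying a signed permutation $i_1 \, i_2 \, \cdots \, i_n$ with $\{|i_1|,\ldots,|i_n|\}=[n]$: the $k$-th block pair is labeled with $i_k$ on $B$ and $-i_k$ on $-B$. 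This gives exactly $2^n n!$ labelings per unlabeled partition.

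Next I would invoke the enumeration from \cite{non_bij} (equivalently \cite{ath_non}), which states that the symmetric $m$-non-nesting partitions defined above are precisely the type $C$ partitions of size $(m+1)n$ of type $(m+1,\ldots,m+1)$, and that these are counted by $\binom{(m+1)n}{n}$. Multiplying,
\begin{equation*}
    \#\{\text{symmetric } m\text{-sketches}\} \;=\; 2^n n! \binom{(m+1)n}{n}.
\end{equation*}
Combined with the preceding proposition identifying symmetric $m$-sketches with regions of $\C_n^{(m)}$, this yields the desired formula.

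The main obstacle is confirming that the chain-joining map really is a bijection in the extended setting, since when $m > 1$ each chain has $m+1$ elements and property (3) of a symmetric $m$-non-nesting partition (no two elements strictly between consecutive chain members lie in a common block) is exactly what the properties in the definition of a symmetric $m$-sketch are designed to encode. I would check that properties (1)–(3) of a symmetric $m$-sketch translate, chain by chain, into the three defining properties of a symmetric $m$-non-nesting partition, and conversely that any labeled symmetric $m$-non-nesting partition, read left to right in its arc diagram, produces a valid symmetric $m$-sketch. Once this correspondence is verified, the count is immediate.
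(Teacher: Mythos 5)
Your proposal is correct and follows essentially the same route as the paper: pass to labeled symmetric $m$-non-nesting partitions via the chain-joining bijection, observe that the labeling contributes a factor of $2^n n!$, and invoke the known count $\binom{(m+1)n}{n}$ of unlabeled symmetric $m$-non-nesting partitions from \cite{non_bij}/\cite{ath_non}. You spell out the bijectivity check and the labeling factor a bit more explicitly than the paper does, but the underlying argument is the same.
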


\section{Catalan deformations of other types}\label{othertypessec}

We will now use `sketches and moves', as in \cite{ber}, to count the regions of Catalan arrangements of other types. 
Depending on the context, we represent the regions of arrangements using sketches, arc diagrams, or lattice paths and frequently make use of the bijections identifying them. 
We usually use sketches to define moves and use arc diagrams and lattice paths to count regions as well as bounded regions.

\subsection{Type D Catalan}\label{dcatsec}

Fix $n \geq 2$. 
The type $D$ Catalan arrangement in $\mathbb{R}^n$ has hyperplanes
\begin{align*}
    X_i+X_j&=-1,0,1\\
    X_i-X_j&=-1,0,1
\end{align*}
for $1 \leq i < j \leq n$. 
Translating this arrangement by setting $X_i=x_i+\frac{1}{2}$ for all $i \in [n]$, we get the arrangement $\D_n$ with hyperplanes
\begin{align*}
    x_i+x_j&=-2,-1,0\\
    x_i-x_j&=-1,0,1
\end{align*}
for $1 \leq i < j \leq n$. 
\Cref{typdintypc} shows $\D_2$ as a sub-arrangement of $\C_2$. 
It also shows how the regions of $\D_2$ partition the regions of $\C_2$.

\begin{figure}[H]
    \centering
    \begin{tikzpicture}[scale=3]
    \clip (-2,-2) rectangle (2,2);
    %\draw[step=0.25cm,blue,very thin] (-2,-2) grid (6,6);
        \draw[ultra thin] (0,2.5)--(0,-2.5);
        \draw[ultra thin] (0.5,2.5)--(0.5,-2.5);
        \draw[ultra thin] (-0.5,2.5)--(-0.5,-2.5);
        \draw[ultra thin] (3,0)--(-3,0);
        \draw[ultra thin] (3,0.5)--(-3,0.5);
        \draw[ultra thin] (3,-0.5)--(-3,-0.5);
        \draw[ultra thick] (-1.5,-1.5)--(1.5,1.5);
        \draw[ultra thick] (-1.5,1.5)--(1.5,-1.5);
        \draw[ultra thick] (-1.5,2.5)--(3,-2);
        \draw[ultra thick] (-3,2)--(1.5,-2.5);
        \draw[ultra thick] (2-0.5,3-0.5)--(-3.75+0.75,-2.75+0.75);
        \draw[ultra thick] (-2+0.5,-3+0.5)--(3.75-0.75,2.75-0.75);
        
        \draw (-1.30,-0.4)--(-0.7,-0.4);
        \node [inner sep=0pt] (1) at (-1.25,-0.4){};
        \node [inner sep=0pt] (2) at (-1.25+1/14,-0.4){};
        \node [inner sep=0pt] (3) at (-1.25+2/14,-0.4){};
        \node [inner sep=0pt] (4) at (-1.25+3/14,-0.4){};
        \node [inner sep=0pt] (-4) at (-1.25+4/14,-0.4){};
        \node [inner sep=0pt] (-3) at (-1.25+5/14,-0.4){};
        \node [inner sep=0pt] (-2) at (-1.25+6/14,-0.4){};
        \node [inner sep=0pt] (-1) at (-1.25+7/14,-0.4){};
        \filldraw (-1.25,-0.4) circle (0.3pt);
        \filldraw (-1.25+1/14,-0.4) circle (0.3pt);
        \filldraw (-1.25+2/14,-0.4) circle (0.3pt);
        \filldraw (-1.25+3/14,-0.4) circle (0.3pt);
        \filldraw (-1.25+4/14,-0.4) circle (0.3pt);
        \filldraw (-1.25+5/14,-0.4) circle (0.3pt);
        \filldraw (-1.25+6/14,-0.4) circle (0.3pt);
        \filldraw (-1.25+7/14,-0.4) circle (0.3pt);
        \draw (1.north)..controls +(up:0.75mm) and +(up:0.75mm)..(3.north);
        \draw (-1.north)..controls +(up:0.75mm) and +(up:0.75mm)..(-3.north);
        \draw (2.north)..controls +(up:1mm) and +(up:1mm)..(-4.north);
        \draw (-2.north)..controls +(up:1mm) and +(up:1mm)..(4.north);
        \draw (-1.25,-0.45) node {\tiny 1};
        \draw (-1.25+1/14,-0.45) node {\tiny 2};
        \draw (-1.25+2.85/14,-0.45) node {\tiny -2};
        \draw (-1.25+4.85/14,-0.45) node {\tiny -1};
        
        \draw (-1.30-0.25,-0.4+-0.4)--(-0.7-0.25,-0.4+-0.4);
        \node [inner sep=0pt] (1') at (-1.25-0.25,-0.4+-0.4){};
        \node [inner sep=0pt] (2') at (-1.25+1.15/14-0.25,-0.4+-0.4){};
        \node [inner sep=0pt] (3') at (-1.25+2/14-0.25,-0.4+-0.4){};
        \node [inner sep=0pt] (4') at (-1.25+3.15/14-0.25,-0.4+-0.4){};
        \node [inner sep=0pt] (-4') at (-1.25+3.85/14-0.25,-0.4+-0.4){};
        \node [inner sep=0pt] (-3') at (-1.25+5/14-0.25,-0.4+-0.4){};
        \node [inner sep=0pt] (-2') at (-1.25+5.85/14-0.25,-0.4+-0.4){};
        \node [inner sep=0pt] (-1') at (-1.25+7/14-0.25,-0.4+-0.4){};
        \filldraw (-1.25-0.25,-0.4+-0.4) circle (0.3pt);
        \filldraw (-1.25+1.15/14-0.25,-0.4+-0.4) circle (0.3pt);
        \filldraw (-1.25+2/14-0.25,-0.4+-0.4) circle (0.3pt);
        \filldraw (-1.25+3.15/14-0.25,-0.4+-0.4) circle (0.3pt);
        \filldraw (-1.25+3.85/14-0.25,-0.4+-0.4) circle (0.3pt);
        \filldraw (-1.25+5/14-0.25,-0.4+-0.4) circle (0.3pt);
        \filldraw (-1.25+5.85/14-0.25,-0.4+-0.4) circle (0.3pt);
        \filldraw (-1.25+7/14-0.25,-0.4+-0.4) circle (0.3pt);
        \draw (1'.north)..controls +(up:0.75mm) and +(up:0.75mm)..(3'.north);
        \draw (-1'.north)..controls +(up:0.75mm) and +(up:0.75mm)..(-3'.north);
        \draw (2'.north)..controls +(up:0.75mm) and +(up:0.75mm)..(4'.north);
        \draw (-2'.north)..controls +(up:0.75mm) and +(up:0.75mm)..(-4'.north);
        \draw (-1.25-0.25,-0.4-0.45) node {\tiny 1};
        \draw (-1.25+1/14-0.25,-0.4-0.45) node {\tiny 2};
        \draw (-1.25+3.65/14-0.25,-0.4-0.45) node {\tiny -2};
        \draw (-1.25+4.85/14-0.25,-0.4-0.45) node {\tiny -1};
    \end{tikzpicture}
    \caption{The arrangement $\C_2$ with the hyperplanes in $\D_2$ in bold. 
    Two regions of $\C_2$ are labeled with their symmetric labeled non-nesting partition.}
    \label{typdintypc}
\end{figure}
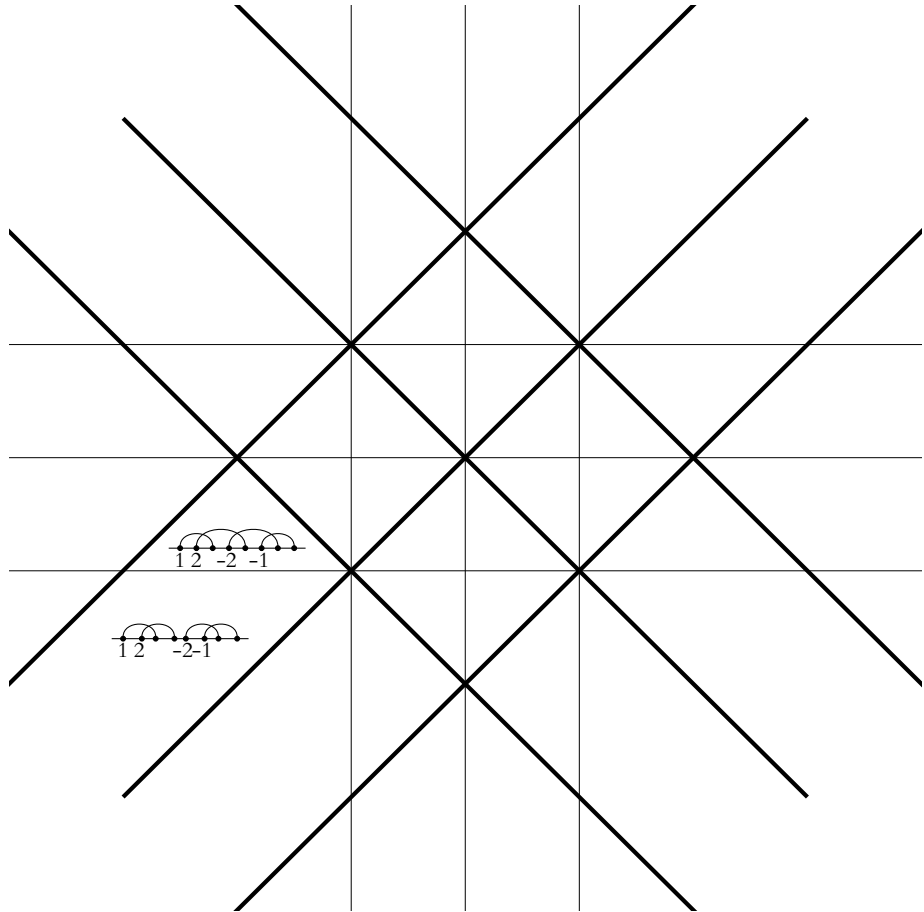

We use the idea of moves to count the regions of $\D_n$ by considering it as a sub-arrangement of $\C_n$. 
The hyperplanes from $\C_n$ that are missing in $\D_n$ are
\begin{equation*}
    2x_i=-2,-1,0
\end{equation*}
for all $i \in [n]$. 
Hence, the type $D$ Catalan moves on symmetric sketches (regions of $\C_n$), which we call $\D$ moves, are as follows:

\begin{enumerate}
    \item Swapping the $2n^{th}$ and $(2n+1)^{th}$ letter.
    \item Swapping the $n^{th}$ and $(n+1)^{th}$ $\alpha$-letters if they are adjacent, along with the $n^{th}$ and $(n+1)^{th}$ $\beta$-letters.
\end{enumerate}

The first move covers the inequalities corresponding to the hyperplanes $x_i+1=-x_i-1$ and $x_i=-x_i$ for all $i \in [n]$ since the only conjugates that are adjacent, by \Cref{mir}, are the $2n^{th}$ and $(2n+1)^{th}$ letter.

The second move covers the inequalities corresponding to the hyperplanes $x_i=-x_i-1$ (equivalently, $x_i+1=-x_i$) for all $i \in [n]$. 
This is due to the fact that the only way \lt{i}{0} and \lt{-i}{-1} as well as \lt{i}{1} and \lt{-i}{0} can be adjacent is, by \Cref{ord}, when the $n^{th}$ and $(n+1)^{th}$ $\alpha$-letters are adjacent. 
Also, by \Cref{mir}, the $n^{th}$ and $(n+1)^{th}$ $\alpha$-letters are adjacent if and only if the $n^{th}$ and $(n+1)^{th}$ $\beta$-letters are adjacent.

\begin{example}
A series of $\D$ moves applied to a symmetric sketch is given below:
\begin{align*}
&\alpha_{-1}^{(-1)}\alpha_{2}^{(0)}\alpha_{-2}^{(-1)}\alpha_{-1}^{(0)}\ \textcolor{blue}{|}\ \alpha_{1}^{(0)}\alpha_{2}^{(1)}\alpha_{-2}^{(0)}\alpha_{1}^{(1)}\\
\xrightarrow{\text{$\D$\ move}}\ &\alpha_{-1}^{(-1)}\alpha_{2}^{(0)}\alpha_{-2}^{(-1)}\alpha_{1}^{(0)}\ \textcolor{blue}{|}\ \alpha_{-1}^{(0)}\alpha_{2}^{(1)}\alpha_{-2}^{(0)}\alpha_{1}^{(1)}\\
\xrightarrow{\text{$\D$\ move}}\ &\alpha_{-1}^{(-1)}\alpha_{-2}^{(-1)}\alpha_{2}^{(0)}\alpha_{1}^{(0)}\ \textcolor{blue}{|}\ \alpha_{-1}^{(0)}\alpha_{-2}^{(0)}\alpha_{2}^{(1)}\alpha_{1}^{(1)}\\
\xrightarrow{\text{$\D$\ move}}\ &\alpha_{-1}^{(-1)}\alpha_{-2}^{(-1)}\alpha_{2}^{(0)}\alpha_{-1}^{(0)}\ \textcolor{blue}{|}\ \alpha_{1}^{(0)}\alpha_{-2}^{(0)}\alpha_{2}^{(1)}\alpha_{1}^{(1)}
\end{align*}
\end{example}

To count the regions of $\D_n$, we have to count the number of equivalence classes of symmetric sketches where two sketches are equivalent if one can be obtained from the other via a series of $\D$ moves. 
In \Cref{typdintypc}, the two labeled regions of $\C_2$ are adjacent and lie in the same region of $\D_2$. 
They are related by swapping of the fourth and fifth letters of their sketches, which is a $\D$ move.

The fact about these moves that will help with the count is that a series of $\D$ moves do not change the sketch too much. 
Hence we can list the sketches that are $\D$ equivalent to a given sketch.

First, consider the case when the $n^{th}$ $\alpha$-letter of the symmetric sketch is not in the $(2n-1)^{th}$ position. 
In this case, the $n^{th}$ $\alpha$-letter is far enough from the $2n^{th}$ letter that a $\D$ move of the first kind (swapping the $2n^{th}$ and $(2n+1)^{th}$ letter) will not affect the letter after the $n^{th}$ $\alpha$-letter. 
Hence it does not change whether the $n^{th}$ and $(n+1)^{th}$ $\alpha$-letters are adjacent.

Let $w$ be a sketch where the $n^{th}$ $\alpha$-letter is not in the $(2n-1)^{th}$ position. 
The number of sketches $\D$ equivalent to $w$ is $4$ when the $n^{th}$ and $(n+1)^{th}$ $\alpha$-letters are adjacent. 
They are illustrated below:
\begin{align*}
\cdots \alpha_{-i}^{(-1)}\alpha_{i}^{(0)} \cdots \alpha_j^{(s)}\ \textcolor{blue}{|}&\ \alpha_{-j}^{(-s)} \cdots \alpha_{-i}^{(0)}\alpha_{i}^{(1)} \cdots\\
\cdots \alpha_{-i}^{(-1)}\alpha_{i}^{(0)} \cdots \alpha_{-j}^{(-s)}\ \textcolor{blue}{|}&\ \alpha_j^{(s)} \cdots \alpha_{-i}^{(0)}\alpha_{i}^{(1)} \cdots\\
\cdots \alpha_{i}^{(0)}\alpha_{-i}^{(-1)} \cdots \alpha_j^{(s)}\ \textcolor{blue}{|}&\ \alpha_{-j}^{(-s)} \cdots \alpha_{i}^{(1)}\alpha_{-i}^{(0)} \cdots\\
\cdots \alpha_{i}^{(0)}\alpha_{-i}^{(-1)} \cdots \alpha_{-j}^{(-s)}\ \textcolor{blue}{|}&\ \alpha_j^{(s)} \cdots \alpha_{i}^{(1)}\alpha_{-i}^{(0)} \cdots
\end{align*}
The number of sketches $\D$ equivalent to $w$ is $2$ when the $n^{th}$ and $(n+1)^{th}$ $\alpha$-letter are not adjacent. 
They are illustrated below:
\begin{equation*}
\cdots \alpha_j^{(s)}\ \textcolor{blue}{|}\ \alpha_{-j}^{(-s)} \cdots \quad \cdots \alpha_{-j}^{(-s)}\ \textcolor{blue}{|}\ \alpha_j^{(s)} \cdots
\end{equation*}
% The number of sketches equivalent to a sketch when the $n^{th}$ $\alpha$-letter is not in the $(2n-1)^{th}$ position
% \begin{enumerate}
%     \item is $4$ when the $n^{th}$ and $(n+1)^{th}$ $\alpha$-letter are adjacent:
%         \begin{align*}
%         \cdots \alpha_{-i}^{(-1)}\alpha_{i}^{(0)} \cdots \alpha_j^{(s)}\ \textcolor{blue}{|}&\ \alpha_{-j}^{(-s)} \cdots \alpha_{-i}^{(0)}\alpha_{i}^{(1)} \cdots\\
%         \cdots \alpha_{-i}^{(-1)}\alpha_{i}^{(0)} \cdots \alpha_{-j}^{(-s)}\ \textcolor{blue}{|}&\ \alpha_j^{(s)} \cdots \alpha_{-i}^{(0)}\alpha_{i}^{(1)} \cdots\\
%         \cdots \alpha_{i}^{(0)}\alpha_{-i}^{(-1)} \cdots \alpha_j^{(s)}\ \textcolor{blue}{|}&\ \alpha_{-j}^{(-s)} \cdots \alpha_{i}^{(1)}\alpha_{-i}^{(0)} \cdots\\
%         \cdots \alpha_{i}^{(0)}\alpha_{-i}^{(-1)} \cdots \alpha_{-j}^{(-s)}\ \textcolor{blue}{|}&\ \alpha_j^{(s)} \cdots \alpha_{i}^{(1)}\alpha_{-i}^{(0)} \cdots
%         \end{align*}
%     \item and is $2$ when the $n^{th}$ and $(n+1)^{th}$ $\alpha$-letter are not adjacent:
%         \begin{equation*}
%         \cdots \alpha_j^{(s)}\ \textcolor{blue}{|}\ \alpha_{-j}^{(-s)} \cdots \quad \cdots \alpha_{-j}^{(-s)}\ \textcolor{blue}{|}\ \alpha_j^{(s)} \cdots
%         \end{equation*}
% \end{enumerate}
Notice also that the equivalent sketches also satisfy the same properties ($n^{th}$ $\alpha$-letter not being in the $(2n-1)^{th}$ position and whether the $n^{th}$ and $(n+1)^{th}$ $\alpha$-letters are adjacent).

In case the $n^{th}$ $\alpha$-letter is in the $(2n-1)^{th}$ position of the symmetric sketch, it can be checked that it has exactly 4 equivalent sketches all of which also have the $n^{th}$ $\alpha$-letter in the $(2n-1)^{th}$ position:
\begin{align*}
    \cdots \alpha_i^{(0)}\alpha_{i}^{(1)}\ \textcolor{blue}{|}&\ \alpha_{-i}^{(-1)}\alpha_{-i}^{(0)} \cdots\\  \cdots\ \alpha_i^{(0)}\alpha_{-i}^{(-1)}\ \textcolor{blue}{|}&\ \alpha_{i}^{(1)}\alpha_{-i}^{(0)} \cdots\\
    \cdots \alpha_{-i}^{(-1)}\alpha_{i}^{(0)}\ \textcolor{blue}{|}&\ \alpha_{-i}^{(0)}\alpha_{i}^{(1)} \cdots\\ \cdots\ \alpha_{-i}^{(-1)}\alpha_{-i}^{(0)}\ \textcolor{blue}{|}&\ \alpha_{i}^{(0)}\alpha_{i}^{(1)} \cdots
\end{align*}

\Cref{typdintypc} shows that each region of $\D_2$ contains exactly 2 or 4 regions of $\C_2$, as expected from the above observations.

\begin{theorem}\label{nooftypeDregions}
The number of $\D$ equivalence classes on symmetric sketches and hence the number of regions of $\D_n$ is
\begin{equation*}
    2^{n-1} \cdot \frac{(2n-2)!}{(n-1)!} \cdot (3n-2).
\end{equation*}
\end{theorem}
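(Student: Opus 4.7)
The plan is to partition the set of symmetric sketches (which, by \Cref{thmtypeccatalannumber}, numbers $N=2^n n!\binom{2n}{n}$) into the three classes already analyzed just before the theorem: Case~2, where the $n$-th $\alpha$-letter sits in position $2n-1$; Case~1a, where it sits in position $p\le 2n-2$ with the $(n+1)$-th $\alpha$-letter adjacent; and Case~1b, where it sits in position $p\le 2n-2$ with the $(n+1)$-th $\alpha$-letter not adjacent. Write $C,A,B$ for the numbers of symmetric sketches in these three classes. The preceding discussion shows that the $\D$-equivalence classes have sizes $4,4,2$ in these three cases respectively, so the number of regions of $\D_n$ equals $C/4+A/4+B/2$.

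To count $C$, I pass to the $\alpha,\beta$-word encoding of \Cref{absp}, viewed as a lattice path of length $2n$ as in \Cref{sketchtolatticex}. Fixing the $n$-th up-step at position $2n-1$ forces positions $1,\dots,2n-2$ to form a Dyck path of length $2n-2$ (count $\tfrac{1}{n}\binom{2n-2}{n-1}$), position $2n-1$ to be an up-step, and position $2n$ to be free. Multiplying by the $2^n n!$ signed permutations that label the first $n$ up-steps yields $C=2^{n+1}\cdot\frac{(2n-2)!}{(n-1)!}$.

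The crux of the argument is to prove $A=B$. Fix $p\in\{n,\dots,2n-2\}$. The prefix of length $p-1$ contains $n-1$ up-steps and ends at height $2n-1-p$, and the up-step at position $p$ brings the path to height $2n-p$. Whatever step is placed at position $p+1$, the height is at least $2n-p-1$, which equals the length of the remaining suffix; a lattice path of length $L$ starting at height at least $L$ cannot dip below $0$, so the suffix contributes $2^{2n-p-1}$ unconstrained choices, independently of whether position $p+1$ is an up-step or a down-step. Summing over $p$ then gives $A=B$; equivalently, flipping the step at position $p+1$ is an involution pairing Case~1a sketches with Case~1b sketches.

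Finally, using $\binom{2n}{n}=\tfrac{2(2n-1)}{n}\binom{2n-2}{n-1}$ one has $A+B=N-C=2^{n+2}(n-1)\cdot\tfrac{(2n-2)!}{(n-1)!}$, so $A=B=2^{n+1}(n-1)\cdot\tfrac{(2n-2)!}{(n-1)!}$. Substituting gives
\[
\frac{C}{4}+\frac{A}{4}+\frac{B}{2} \;=\; 2^{n-1}\cdot\frac{(2n-2)!}{(n-1)!}\bigl(1+(n-1)+2(n-1)\bigr) \;=\; 2^{n-1}\cdot\frac{(2n-2)!}{(n-1)!}(3n-2),
\]
which is the claimed formula. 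The one genuinely tricky step is the suffix-is-unconstrained observation that forces $A=B$; everything else is straightforward bookkeeping on top of the case analysis carried out before the theorem.
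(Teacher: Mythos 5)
Your proof is correct and follows essentially the same route as the paper: partition symmetric sketches into the same three classes based on the position of the $n$-th $\alpha$-letter and the adjacency of the $(n+1)$-th, use that the equivalence classes have sizes $4,4,2$, count the $(2n-1)$-position class via Dyck paths, and establish $A=B$ by the involution that flips the letter after the $n$-th $\alpha$-letter. The only difference is cosmetic: you justify the well-definedness of that involution explicitly via the suffix-starts-at-height-$\ge L$ observation, whereas the paper simply asserts it.
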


\begin{proof}
By the observations made above, the number of sketches equivalent to a given sketch only depends on its $\alpha,\beta$-word (see \Cref{absp}).
So, we need to count the number of $\alpha,\beta$-words of length $2n$ with any prefix having at least as many $\alpha$-letters as $\beta$-letters that are of the following types:
\begin{enumerate}
    \item The $n^{th}$ $\alpha$-letter is not in the $(2n-1)^{th}$ position and
    \begin{enumerate}
        \item the letter after the $n^{th}$ $\alpha$-letter is an $\alpha$.
        \item the letter after the $n^{th}$ $\alpha$-letter is a $\beta$.
    \end{enumerate}
    \item The $n^{th}$ $\alpha$-letter is in the $(2n-1)^{th}$ position.
\end{enumerate}

We first count the second type of $\alpha,\beta$-words.
If the $n^{th}$ $\alpha$-letter is in the $(2n-1)^{th}$ position, the first $(2n-2)$ letters have $(n-1)$ $\alpha$-letters and $(n-1)$ $\beta$-letters and hence form a ballot sequence. 
This means that there is no restriction on the $2n^{th}$ letter; it can be $\alpha$ or $\beta$. 
So, the total number of such $\alpha,\beta$-words is
\begin{equation*}
    2 \cdot \frac{1}{n}\binom{2n-2}{n-1}.
\end{equation*}

The number of both the types 1(a) and 1(b) of $\alpha,\beta$-words mentioned above are the same.
This is because changing the letter after the $n^{th}$ $\alpha$-letter is an involution on the set of $\alpha,\beta$-word of length $2n$ with any prefix having at least as many $\alpha$-letters as $\beta$-letters.
We have just counted such words that have the $n^{th}$ $\alpha$-letter in the $(2n-1)^{th}$ position. 
Hence, using \Cref{countabtypc}, we get that the number of words of type 1(a) and 1(b) are both equal to
\begin{equation*}
    \frac{1}{2} \cdot \left[\binom{2n}{n}-\frac{2}{n}\binom{2n-2}{n-1}\right].
\end{equation*}
Combining the observations made above, we get that the number of regions of $\D_n$ is
\begin{equation*}
    2^nn!\cdot \Bigg{(}\frac{1}{4} \cdot \Bigg{[}\frac{2}{n}\dbinom{2n-2}{n-1}+\frac{1}{2} \cdot \bigg{[}\dbinom{2n}{n}-\frac{2}{n}\dbinom{2n-2}{n-1}\bigg{]}\Bigg{]}
    + \frac{1}{2} \cdot \Bigg{[}\frac{1}{2} \cdot \bigg{[}\dbinom{2n}{n}-\frac{2}{n}\dbinom{2n-2}{n-1}\bigg{]}\Bigg{]}\Bigg{)}
\end{equation*}
which simplifies to the required formula.
\end{proof}

Just as we did for $\C_n$, we can describe and count which regions of $\D_n$ are bounded.

\begin{theorem}\label{typDbdd}
The number of bounded regions of $\D_n$ is
\begin{equation*}
    2^{n - 1} \cdot \frac{(2n - 3)!}{(n - 2)!} \cdot (3n - 4).
\end{equation*}
\end{theorem}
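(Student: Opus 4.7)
The plan is to parallel the proof of \Cref{typeCbdd} using the classification of $\D$-equivalence classes from the proof of \Cref{nooftypeDregions}. Since $\D_n$ is a sub-arrangement of $\C_n$, each $\D_n$-region is a union of $\C_n$-regions and is bounded if and only if every constituent $\C_n$-region is; by \Cref{typeCbdd}, the latter means each such sketch's lattice path avoids the $x$-axis except at the origin, i.e.\ is a strict ballot. Thus bounded $\D_n$-regions correspond exactly to $\D$-equivalence classes in which every sketch is a strict ballot.

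The first $\D$-move swaps positions $2n$ and $2n+1$, flipping the $\alpha/\beta$-type of the $2n$-th letter, so each class contains precisely two $\alpha,\beta$-words differing only at position $2n$. Both of these are strict ballot iff their common first $2n-1$ letters form a strict ballot ending at height $\geq 3$ (otherwise the choice $\beta$ at position $2n$ would make the path touch the axis there). In particular, Type~2 classes are excluded since $k_{2n-2}=n-1$ forces the path to touch the axis at position $2n-2$, and only Type~1(a) and Type~1(b) pairs contribute. Removing the mandatory first up-step bijects these length-$(2n-1)$ strict ballots of end height $\geq 3$ with ballot paths of length $2n-2$ ending at height $\geq 2$, which the same telescoping reflection argument as in \Cref{typeCbdd} counts as $\binom{2n-2}{n}$. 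Writing $A$ and $B$ for the counts of such paths whose $(n-1)$-th up-step is followed by U (Type~1(a)) and D (Type~1(b)) respectively, we have $A+B=\binom{2n-2}{n}$. Each Type~1(a) pair of words yields $2^{n-1}n!$ classes and each Type~1(b) pair yields $2^{n}n!$ classes, so the total bounded region count is
\[
2^{n-1}n!\,(A+2B) \;=\; 2^{n-1}n!\bigl[\tbinom{2n-2}{n}+B\bigr].
\]

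The main obstacle is evaluating $B$. Decomposing such a Type~1(b) path at the UD pair immediately following its $(n-1)$-th up-step --- at position $p'$, say --- yields, for each admissible $p'\in[n-1,2n-4]$, a prefix of length $p'-1$ (a non-negative path to height $2n-3-p'$ with $n-2$ up-steps, counted by $\binom{p'-1}{n-2}-\binom{p'-1}{n-1}$) together with an arbitrary binary suffix of length $2n-3-p'$ containing at least one up-step (counted by $2^{2n-3-p'}-1$); alternatively, viewing such a path as a length-$(2n-4)$ ballot $P$ (end height $\geq 2$) together with a choice of one of the $q_{n-1}-q_{n-2}$ positions in the gap between its $(n-2)$-th and $(n-1)$-th up-steps at which to insert a UD pair expresses $B$ as $\sum_P(q_{n-1}-q_{n-2})$. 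Either route evaluates to $B=\binom{2n-3}{n-3}$. The final algebra is routine:
\[
\tbinom{2n-2}{n}+\tbinom{2n-3}{n-3} \;=\; \frac{(2n-3)!\bigl[(2n-2)+(n-2)\bigr]}{n!(n-2)!} \;=\; \frac{(3n-4)(2n-3)!}{n!(n-2)!},
\]
so the count of bounded $\D_n$-regions simplifies to $2^{n-1}\cdot\frac{(2n-3)!}{(n-2)!}\cdot(3n-4)$, as required.
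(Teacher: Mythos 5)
Your proof is correct, and it takes a genuinely different route from the paper's. Both arguments share the same setup: a region of $\D_n$ is bounded exactly when every $\C_n$-region it contains has a lattice path touching the $x$-axis only at the origin; each $\D$-equivalence class contains exactly two $\alpha,\beta$-words differing only in the $2n$-th step; and Type~2 words (the $n$-th $\alpha$-letter in position $2n-1$) are automatically excluded. The divergence is in the final count. The paper avoids ever splitting into Types~1(a) and 1(b) by enlarging the target to the set $S$ of paths that may also touch at $(2n,0)$: $S$ is closed both under $\D$ moves and under the involution flipping the step after the $n$-th $\alpha$-letter, so Types~1(a) and 1(b) contribute equally within $S$; the paper then computes $|S_\D|$ by the same arithmetic as in \Cref{nooftypeDregions} and subtracts the unbounded classes in $S_\D$, which correspond to Dyck paths of length $2n-2$. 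You instead enumerate directly, at the cost of needing the auxiliary count $B$ of Type~1(b) pairs. Your identity $B=\binom{2n-3}{n-3}$ is correct but only sketched; a clean way to nail it is the bijection that deletes the down-step immediately after the $(n-1)$-th up-step in a length-$(2n-2)$ ballot path of end height $\geq 2$, landing on length-$(2n-3)$ ballot paths of end height $\geq 3$ (counted by $\binom{2n-3}{n}=\binom{2n-3}{n-3}$ via the same telescoping), the key point for the inverse (re-insert a D after the $(n-1)$-th U) being that a length-$(2n-3)$ ballot path cannot return to height $0$ at or after its $(n-1)$-th up-step, since that would require at least $2(n-1)>2n-3$ steps. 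In short: your route is more elementary but requires an extra combinatorial lemma; the paper's enlarge-and-subtract trick with the involution sidesteps it.
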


\begin{proof}
For $n \geq 2$, both $\C_n$ and $\D_n$ have rank $n$. 
Hence, a region of $\D_n$ is bounded exactly when all the regions of $\C_n$ it contains are bounded.

We have already seen in \Cref{typeCbdd} that a region of $\C_n$ is bounded exactly when its corresponding lattice path does not touch the $x$-axis except at the origin. 
Such regions are not closed under $\D$ moves. 
However, if we include regions whose corresponding lattice paths touch the $x$-axis only at the origin and $(2n,0)$, this set of regions, which we call $S$, is closed under the action of $\D$ moves because such lattice paths are closed under the action of changing the $2n^{th}$ step. 
Denote by $S_\D$ the set of equivalence classes that $\D$ moves partition $S$ into, i.e., $S_\D$ is the set of regions of $\D_n$ that contain regions of $S$.

Just as in the proof of \Cref{nooftypeDregions}, one can check that the set $S$ is closed under the action of changing the letter after the $n^{th}$ $\alpha$-letter. 
Also, note that the lattice paths in $S$ do not touch the $x$-axis at $(2n - 2,0)$, and hence the $n^{th}$ $\alpha$-letter cannot be in the $(2n - 1)^{th}$ position. 
Using the above observations and the same method to count regions of $\D_n$ as in the proof of \Cref{nooftypeDregions}, we get the number of regions in $S_\D$ is
\begin{equation*}
    2^n n! \cdot \frac{3}{8}\left(\binom{2n - 1}{n} + \frac{1}{n} \binom{2n - 2}{n - 1}\right).
\end{equation*}

It can also be checked that each unbounded region in $S$ is $\D$ equivalent to exactly one other region of $S$, and this region is bounded. 
This is because the lattice paths corresponding to these unbounded regions touch the $x$-axis at $(2n, 0)$. 
Hence, they cannot have the $n^{th}$ and $(n + 1)^{th}$ $\alpha$-letters being adjacent and changing the $2n^{th}$ letter to an $\alpha$ gives a bounded region. 
Since the unbounded regions in $S$ correspond to Dyck paths of length $(2n - 2)$ (by deleting the first and last step), we get that the number of unbounded regions in $S_\D$ is
\begin{equation*}
    2^n n! \cdot \frac{1}{n}\binom{2n - 2}{n - 1}.
\end{equation*}

Combining the above results, we get that the number of bounded regions of $\D_n$ is
\begin{equation*}
    2^n n! \left(\frac{3}{8}\left(\binom{2n - 1}{n} + \frac{1}{n} \binom{2n - 2}{n - 1}\right) - \frac{1}{n}\binom{2n - 2}{n - 1}\right).
\end{equation*}
This simplifies to give our required result.
\end{proof}

As mentioned earlier, we can choose a specific sketch from each $\D$ equivalence class to represent the regions of $\D_n$. 
It can be checked that symmetric sketches that satisfy the following are in bijection with regions of $\D_n$:
\begin{enumerate}
    \item The last letter is a $\beta$-letter.
    \item The $n^{th}$ $\alpha$-letter must have a negative label if the letter following it is an $\alpha$-letter or the $n^{th}$ $\beta$-letter.
\end{enumerate}
We will call such sketches type $D$ sketches. 
They will be used in \Cref{statsec} to interpret the coefficients of $\chi_{\D_n}$. 
Note that the type $D$ sketches that correspond to bounded regions of $\D_n$ are those, when converted to a lattice path, do not touch the $x$-axis apart from at the origin.

\subsection{Pointed type C Catalan}

The type $B$ and type $BC$ Catalan arrangements we are going to consider now are not sub-arrangements of the type $C$ Catalan arrangement. 
While it is possible to consider these arrangements as sub-arrangements of the type $C$ $2$-Catalan arrangement (see \Cref{extc}), this would add many extra hyperplanes. 
This would make defining moves and counting equivalence classes difficult. 
Also, we do not have a simple characterization of $\alpha, \beta$-words associated to symmetric $2$-sketches, as we do for symmetric sketches (see \Cref{absp}).

We instead consider them as a sub-arrangements of the arrangement $\Po_n$ in $\mathbb{R}^n$ that has hyperplanes
\begin{equation*}
    \begin{aligned}
    x_i&=-\frac{5}{2},-\frac{3}{2},-1,-\frac{1}{2},0,\frac{1}{2},\frac{3}{2}\\
    x_i+x_j&=-2,-1,0\\
    x_i-x_j&=-1,0,1
\end{aligned}
\end{equation*}
for all $1 \leq i< j \leq n$. 
% Rewriting the hyperplanes of the arrangement as
% \begin{align*}
%     &x_i+1=-\frac{3}{2},\ x_i=-\frac{3}{2},\ x_i+1=-x_i-1,\ x_i=-\frac{1}{2},\ x_i=-x_i,\ x_i=\frac{1}{2},\ x_i=\frac{3}{2}\\
%     &x_i+1=-x_j-1,\ x_i+1=-x_j,\ x_i=-x_j\\
%     &x_i+1=x_j,\ x_i=x_j,\ x_i=x_j+1
% \end{align*}
% for all $1 \leq i< j \leq n$, we can see that a region of this arrangement is given by a valid total order on
It can be checked that the regions of $\Po_n$ are given by valid total orders on
\begin{equation*}
    \{x_i+s \mid i \in [n],\ s\in \{0,1\}\} \cup \{-x_i-s \mid i \in [n],\ s\in \{0,1\}\} \cup \{-\frac{3}{2},-\frac{1}{2},\frac{1}{2},\frac{3}{2}\}.
\end{equation*}

\begin{remark}
The arrangement $\Po_n$ is the arrangement $\C_n(\lambda)$ defined in \cite[Equation (4)]{ath_non} with $\lambda_i = 2$ for all $i \in [n]$ and $m = 2$.
\end{remark}

We now define sketches that represent such orders. 
Just as beofre, we represent $x_i+s$ as \lt{i}{s} and $-x_i-s$ as \lt{-i}{-s} for any $i \in [n]$ and $s \in \{0,1\}$. 
The numbers $-\frac{3}{2},-\frac{1}{2},\frac{1}{2},\frac{3}{2}$ will be represented as \lt{-}{-1.5}, \lt{-}{-0.5}, \lt{+}{0.5}, \lt{+}{1.5} respectively.

\begin{example}
The total order
\begin{equation*}
    -\frac{3}{2}<x_2<-x_1-1<-\frac{1}{2}<x_1<x_2+1<-x_2-1<-x_1<\frac{1}{2}<x_1+1<-x_2<\frac{3}{2}
\end{equation*}
is represented as \lt{-}{-1.5} \lt{2}{0} \lt{-1}{-1} \lt{-}{-0.5} \lt{1}{0} \lt{2}{1} \lt{-2}{-1} \lt{-1}{0} \lt{+}{0.5} \lt{1}{1} \lt{-2}{0} \lt{+}{1.5}.
\end{example}

Set $B(n)$ to be the set
\begin{equation*}
    \{\lt{i}{s} \mid i\in[n],\ s\in\{0,1\}\} \cup \{\lt{i}{s} \mid -i\in[n],\ s\in\{-1,0\}\}  \cup \{\lt{-}{-1.5},\lt{-}{-0.5},\lt{+}{0.5},\lt{+}{1.5}\}.
\end{equation*}

We define \emph{pointed symmetric sketches} to be the words in $B(n)$ that correspond to regions of $\Po_n$ (this terminology will become clear soon). 
Denote by $\overline{\lt{x}{s}}$ the letter $\lt{-x}{-s}$ for any $\lt{x}{s} \in B(n)$. 
We have the following characterization of pointed symmetric sketches:

\begin{proposition}
A word in the letters $B(n)$ is a pointed symmetric sketch if and only if the following hold for any $\lt{x}{s}, \lt{y}{t} \in B(n)$:
\begin{enumerate}
    \item If $\alpha_x^{(s)}$ appears before $\alpha_y^{(t)}$ then $\overline{\alpha_y^{(t)}}$ appears before $\overline{\alpha_x^{(s)}}$.
    \item If $\alpha_x^{(s-1)}$ appears before $\alpha_y^{(t-1)}$ then $\alpha_x^{(s)}$ appears before $\alpha_y^{(t)}$.
    \item $\alpha_x^{(s-1)}$ appears before $\alpha_x^{(s)}$.
    \item Each letter of $B(n)$ appears exactly once.
\end{enumerate}
\end{proposition}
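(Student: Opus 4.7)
My plan is to follow the same template as \Cref{pointconst}. The forward direction is a routine verification: if $w$ is the total order determined by a point $(x_1, \ldots, x_n) \in \R^n$ lying in some region of $\Po_n$ (together with the four fixed values $\pm\tfrac{1}{2}, \pm\tfrac{3}{2}$), then property (1) expresses that negation reverses order, property (2) expresses that adding $1$ to both sides preserves order, property (3) expresses $r < r+1$, and property (4) is automatic from $w$ being a total order on all of $B(n)$.

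For the converse, I would induct on $n$. The base case $n = 0$ amounts to noting that the unique total order on the four constants $\lt{-}{-1.5}, \lt{-}{-0.5}, \lt{+}{0.5}, \lt{+}{1.5}$ satisfying (1)--(4) is the numerically sorted one, and this is realized by the unique region of $\Po_0 \subseteq \R^0$. For the inductive step, given $w$ on $B(n)$ satisfying (1)--(4), delete the four letters $\lt{n}{0}, \lt{n}{1}, \lt{-n}{-1}, \lt{-n}{0}$ to obtain a shorter word $w'$ on $B(n-1)$. Properties (1)--(4) are inherited by $w'$ immediately via restriction, since the four deleted letters constitute precisely $B(n) \setminus B(n-1)$. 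By the induction hypothesis, $w'$ is realized by some $\mathbf{x}' = (x'_1, \ldots, x'_{n-1}) \in \R^{n-1}$. It then suffices, by property (1), to choose $x_n \in \R$ such that $x_n$ and $x_n + 1$ occupy the correct positions in $w$ relative to the coordinates of $\mathbf{x}'$ and the four constants.

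The main obstacle is showing such an $x_n$ exists. Letting $(u, v)$ denote the open interval strictly between the neighbors of $\lt{n}{0}$ in $w$ and $(u', v')$ the analogous interval around $\lt{n}{1}$, we need $x_n \in (u, v) \cap (u' - 1, v' - 1)$. The argument that this intersection is nonempty mirrors \Cref{pointconst} exactly: if it were empty, one could exhibit a letter $\lt{i}{s}$ (or one of the constants) whose value lies after $\lt{n}{0}$ in $w$ while $\lt{i}{s+1}$ lies before $\lt{n}{1}$, directly contradicting property (2) applied to the pairs $(\lt{n}{0}, \lt{i}{s})$ and $(\lt{n}{1}, \lt{i}{s+1})$. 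The four constants contribute no new difficulty, as $\{\lt{+}{0.5}, \lt{+}{1.5}\}$ and $\{\lt{-}{-1.5}, \lt{-}{-0.5}\}$ behave as a ``phantom variable'' $x = \tfrac{1}{2}$ and its negation, themselves satisfying (1)--(3). A final small perturbation places $x_n$ off the finitely many forbidden values where it would coincide with a neighbor or lie on a hyperplane of $\Po_n$.
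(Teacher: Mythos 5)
Your plan follows the paper's own approach: the paper's proof of this proposition consists of the single remark that one can inductively construct a point ``just as was done in the proof of \Cref{pointconst},'' and you carry out exactly that induction. However, your base-case claim is false, and this exposes a gap that the proposition as stated already has: properties (1)--(4) do \emph{not} determine the relative order of $\lt{-}{-0.5}$ and $\lt{+}{0.5}$. Nothing among (1)--(4) forces $-\tfrac12$ to precede $\tfrac12$. For $n=0$ the order $\lt{-}{-1.5}\ \lt{+}{0.5}\ \lt{-}{-0.5}\ \lt{+}{1.5}$ is mirror-symmetric (so (1) holds), satisfies (3) and (4), and makes every applicable instance of (2) hold, yet it is not realizable; and for $n=1$ the word
\[
\lt{1}{0}\ \lt{+}{0.5}\ \lt{1}{1}\ \lt{+}{1.5}\ \lt{-}{-1.5}\ \lt{-1}{-1}\ \lt{-}{-0.5}\ \lt{-1}{0}
\]
satisfies all four properties while asserting $\tfrac32 < -\tfrac32$.

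This is precisely where your ``phantom variable'' intuition breaks down. For a genuine coordinate $x_i$ the properties never need to commit to the order of $\lt{i}{0}$ versus $\lt{-i}{0}$, because the sign of $x_i$ is chosen after the fact; but the constant ``variable'' is pinned at $\tfrac12$, so the ordering $\lt{-}{-0.5}$ before $\lt{+}{0.5}$ is a genuine extra constraint that (1)--(4) fail to encode. Adding the clause ``$\lt{-}{-0.5}$ appears before $\lt{+}{0.5}$'' (equivalently, $\lt{-}{-0.5}$ lies in the first half) repairs the statement, and with that fix the rest of your inductive argument --- deleting the $\pm n$ letters, realizing $w'$, and placing $x_n$ in the intersection of the two intervals via property (2) --- goes through essentially as you outline.
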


Just as was done in the proof of \Cref{pointconst}, we can inductively construct a point in $\R^n$ satisfying the inequalities specified by a pointed sketch. 
Also, just as for type $C$ sketches, it can be shown that these sketches are symmetric about the center. 
We also represent such sketches using arc diagrams in a similar manner. 
Note that in this case we also inlcude an arc between \lt{-}{-0.5} and \lt{+}{0.5}.

\begin{example}\label{valsketch}
To the pointed sketch given below, we associate the arc diagram in \Cref{symdiag}.
\begin{equation*}
    \lt{-}{-1.5}\ \lt{2}{0}\ \lt{-1}{-1}\ \lt{-}{-0.5}\ \lt{1}{0}\ \lt{2}{1}\ \textcolor{blue}{|}\ \lt{-2}{-1}\ \lt{-1}{0}\ \lt{+}{0.5}\ \lt{1}{1}\ \lt{-2}{0}\ \lt{+}{1.5}
\end{equation*}
\begin{figure}[H]
    \centering
    \begin{tikzpicture}
        \node (-6) at (-6+0.5,0) {$-$};
        \node (-5) at (-5+0.5,0) {$2$};
        \node (-4) at (-4+0.5,0) {$-1$};
        \node (-3) at (-3+0.5,0) {$-$};
        \node (-2) at (-2+0.5,0) {$1$};
        \node (-1) at (-1+0.45,0) {$2$};
        \draw [blue] (0,-0.5)--(0,0.5);
        \node (1) at (1-0.45,0) {$-2$};
        \node (2) at (2-0.5,0) {$-1$};
        \node (3) at (3-0.5,0) {$+$};
        \node (4) at (4-0.5,0) {$1$};
        \node (5) at (5-0.5,0) {$-2$};
        \node (6) at (6-0.5,0) {$+$};
        \draw (-6.north)..controls +(up:13mm) and +(up:13mm)..(-3.north);
        \draw (3.north)..controls +(up:25mm) and +(up:25mm)..(-3.north);
        \draw (6.north)..controls +(up:13mm) and +(up:13mm)..(3.north);
        \draw (-4.north)..controls +(up:20mm) and +(up:20mm)..(2.north);
        \draw (-2.north)..controls +(up:20mm) and +(up:20mm)..(4.north);
        \draw (-5.north)..controls +(up:15mm) and +(up:15mm)..(-1.north);
        \draw (5.north)..controls +(up:15mm) and +(up:15mm)..(1.north);
    \end{tikzpicture}
    \caption{Arc diagram associated to the pointed symmetric sketch in \Cref{valsketch}.}
    \label{symdiag}
\end{figure}
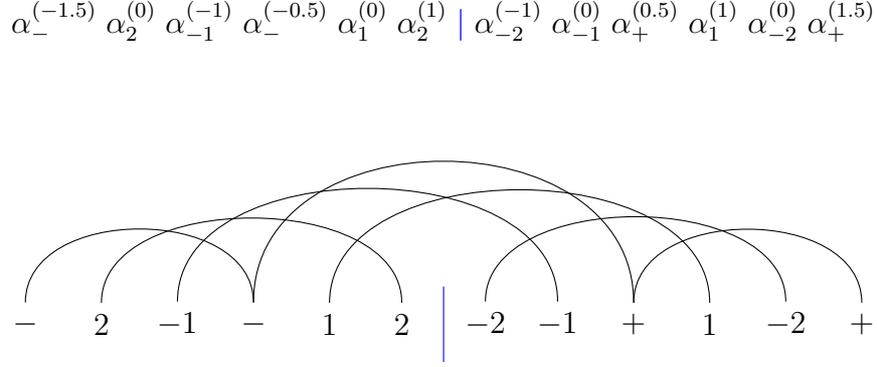
\end{example}

To a pointed symmetric sketch, we can associate a pointed $\alpha,\beta$-word of length $(2n+2)$ and a signed permutation as follows:
\begin{enumerate}
    \item For the letters in the first half of the pointed sketch of the form \lt{i}{0}, \lt{-i}{-1} or \lt{-}{-1.5}, we write $\alpha$ and for the others we write $\beta$ ($\alpha$ corresponds to `openers' in the arc diagram and $\beta$ to `closers'). 
    The $\beta$ corresponding to \lt{-}{0.5} is pointed to.
    
    \item The subscripts of the first $n$ $\alpha$-letters other than \lt{-}{-1.5} gives us the signed permutation.
\end{enumerate}

\begin{example}
To the pointed sketch in \Cref{valsketch}, we associate the following pair:
\begin{enumerate}
    \item Pointed $\alpha,\beta$-word: $\alpha\alpha\alpha\textcolor{red}{\beta}\alpha\beta$.
    \item Signed permutation: $2 \ \ -1$.
\end{enumerate}
\end{example}

As was done for symmetric sketches, we can see that the method given above to get a signed permutation does actually give a signed permutation. 
Also, such a pair has at most one pointed sketch associated to it. 
We now characterize the pointed $\alpha,\beta$-words and signed permutations associated to pointed sketches.

\begin{proposition}\label{pospoi}
A pair consisting of
\begin{enumerate}
    \item a pointed $\alpha,\beta$-word of length $(2n + 2)$ satisfying the property that in any prefix, there are at least as many $\alpha$-letters as $\beta$-letters and that the number of $\alpha$-letters before the pointed $\beta$ is $(n+1)$, and
    \item any signed permutation
\end{enumerate}
corresponds to a pointed symmetric sketch and all pointed sketches correspond to such pairs.
\end{proposition}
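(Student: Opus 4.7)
The plan is to follow the structure of the proof of \Cref{absp}, adapted to the pointed setting. The forward direction is a verification of the two conditions on the pointed $\alpha,\beta$-word; the backward direction is an explicit construction of the sketch.

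In the forward direction, given a pointed symmetric sketch and its associated pair $(w,\pi)$, I would verify the conditions on $w$. The prefix condition follows immediately from property~(3) of pointed sketches: each $\beta$-letter in the first half (of the form \lt{i}{1}, \lt{-i}{0}, or \lt{-}{-0.5}) is preceded by its paired $\alpha$-letter (\lt{i}{0}, \lt{-i}{-1}, or \lt{-}{-1.5} respectively), giving an injective matching of $\beta$'s to earlier $\alpha$'s in any prefix. For the count of $n+1$ $\alpha$-letters before the pointed $\beta$, the letter \lt{-}{-1.5} always contributes one. For each $i \in [n]$, the two $\alpha$-letters with absolute subscript value $i$ are \lt{i}{0} and \lt{-i}{-1}. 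By analyzing which pair of letters from the quartet $\{\lt{i}{0}, \lt{i}{1}, \lt{-i}{-1}, \lt{-i}{0}\}$ lies in the first half (determined by the conjugate-pair constraint together with property~(3)), one checks case by case that exactly one of \lt{i}{0}, \lt{-i}{-1} lies before \lt{-}{-0.5} in the first half; summing over $i$ gives $n+1$.

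In the backward direction, given a pair $(w,\pi)$ satisfying the conditions, I would extend $w$ to a word of length $4n+4$ by the mirror-and-swap operation analogous to step~(1) in the proof of \Cref{absp}, placing the four forced letters \lt{-}{-1.5}, \lt{-}{-0.5}, \lt{+}{0.5}, \lt{+}{1.5} at their determined positions. The $n$ $\alpha$-letters strictly between position~$1$ and the pointed $\beta$ would receive subscripts $i_1,\ldots,i_n$ from $\pi$. The key intermediate observation is: if $q$ denotes the number of $\beta$'s in the first half of $w$, then by property~(2) applied to the whole sketch together with property~(3) (which forces every $\beta$ in the first half to have its paired $\alpha$ also in the first half), the $\beta$-subscripts in the first half in position order must be the first $q$ $\alpha$-subscripts in position order, namely $-, i_1, \ldots, i_{q-1}$. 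Using property~(2) again together with the mirror structure of the second half, the remaining $\alpha$-subscripts in the first half (those at positions after the pointed $\beta$) would be forced to be $-i_n, -i_{n-1}, \ldots, -i_q$ in that order. Mirroring then yields the second half and completes the sketch.

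The main obstacle will be verifying that the constructed word is a valid pointed sketch, i.e., satisfies properties~(1)--(4). Property~(4) (each letter of $B(n)$ appears exactly once) and property~(3) follow from the construction and the forced placement of pairs. Property~(1) is guaranteed by the mirror-and-swap extension. Property~(2) requires verifying that the globally matched $\alpha$- and $\beta$-orders are consistent across both halves; this reduces to a direct verification from the explicit $\alpha$- and $\beta$-subscript sequences constructed above, using that the $\beta$-position pairing takes the first $q$ indices of the $\alpha$-sequence.
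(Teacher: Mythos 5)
Your forward direction is a valid alternative to the paper's: the paper derives the count $n+1$ from a nesting analysis of the arc joining the pointed $\beta$ to its mirror image (via the counts $N_{\alpha,b}$, $N_{\alpha,a}$, $N_{\beta,b}$, $N_{\beta,a}$), whereas you derive it by observing that for each $i\in[n]$ exactly one of the two openers \lt{i}{0}, \lt{-i}{-1} precedes the pointed $\beta$. Both routes are sound, although you leave the promised case analysis unexecuted.

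The backward direction contains a genuine gap. You position the special opener \lt{-}{-1.5} at the first $\alpha$-slot of the first half, so the $\alpha$-subscripts begin $-, i_1, \ldots$ and the first $q$ $\beta$-subscripts read $-, i_1, \ldots, i_{q-1}$. This is false in general: the opener matched to the pointed $\beta$ under the non-nesting pairing is the $(c+1)$-st $\alpha$, where $c$ is the number of $\beta$'s preceding the pointed $\beta$ in the first half, and $c$ need not vanish. For instance, with $n=2$ and $w=\alpha\alpha\beta\alpha\beta\beta$ with the pointed $\beta$ at position $5$, the pointed $\beta$ is the second closer and matches the second opener, so \lt{-}{-1.5} occupies position~$2$, not position~$1$, and the $\beta$-subscripts in the first half run $i_1,-,i_2$ rather than $-,i_1,i_2$. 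Your ``key intermediate observation'' therefore fails, and with it the labeling scheme (``the $n$ $\alpha$-letters strictly between position~$1$ and the pointed $\beta$'' would then include \lt{-}{-1.5} while omitting the $\alpha$ at position~$1$). Fixing this requires first locating \lt{-}{-1.5} via the non-nesting matching and interleaving the signed-permutation subscripts around it; this is precisely where the paper's argument earns its keep, showing that the pointed $\beta$'s arc introduces no forbidden nesting exactly when $N_{\alpha,b}=n+1$ --- a step your proposal does not address.
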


\begin{proof}
Most of the proof is the same as that for type $C$ sketches.
The main difference is pointing to the $\beta$-letter corresponding to \lt{-}{-0.5}.
The property we have to take care of is that there is no nesting in the arc joining \lt{-}{0.5} to \lt{+}{0.5}.
This is the same as specifying when an arc drawn from a $\beta$-letter in the first half to its mirror image in the second half does not cause any nesting.

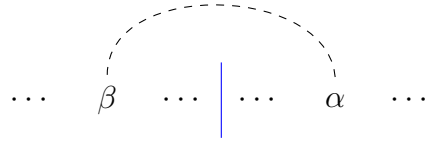
\begin{figure}[H]
    \centering
    \begin{tikzpicture}
        \node at (-2,0) {$\cdots$};
        \node (b) at (-1,0) {$\beta$};
        \node at (0,0) {$\cdots$};
        \draw [blue] (0.5,-0.5)--(0.5,0.5);
        \node at (1,0) {$\cdots$};
        \node (a) at (2,0) {$\alpha$};
        \node at (3,0) {$\cdots$};
        \draw [dashed](b.north)..controls +(up:13mm) and +(up:13mm)..(a.north);
    \end{tikzpicture}
    \caption{Arc from $\beta$ to its mirror image.}
\end{figure}

Denote by $N_{\alpha,b}$ the number of $\alpha$-letters before the $\beta$ under consideration, $N_{\alpha,a}$ the number of $\alpha$-letters in the first half after the $\beta$ and similarly define $N_{\beta,b}$ and $N_{\beta,a}$.
The condition that we do not want an arc inside the one joining the $\beta$ to its mirror is given by
\begin{equation*}
    N_{\alpha,b} \geq N_{\beta,b} + 1 + N_{\beta,a} + N_{\alpha,a}.
\end{equation*}
This is because of the symmetry of the arc diagram and the fact that we want any $\beta$-letter between the pointed $\beta$ and its mirror to have its corresponding $\alpha$ before the pointed $\beta$.
Similarly, the condition that we do not want the arc joining the $\beta$ to its mirror to be contained in any arc is given by
\begin{equation*}
    N_{\alpha,b} \leq N_{\beta,b} + 1 + N_{\beta,a} + N_{\alpha,a}.
\end{equation*}
This is because of the symmetry of the arc diagram and the fact that we want any $\alpha$-letter before the pointed $\beta$ to have its corresponding $\beta$ before the mirror of the pointed $\beta$.

Combining the above observations, we get
\begin{equation*}
    N_{\alpha,b} = N_{\beta,b} + 1 + N_{\beta,a} + N_{\alpha,a}.
\end{equation*}
But this says that the number of $\alpha$-letters before the pointed $\beta$ should be equal to the number of remaining letters in the first half.
Since the total number of letters in the first half is $(2n+2)$, we get that the arc joining a $\beta$ in the first half to its mirror does not cause nesting problems if and only if the number of $\alpha$-letters before it is $(n+1)$.
\end{proof}

Just as we used lattice paths for symmetric sketches, we use pointed lattice paths to represent pointed symmetric sketches. 
The one corresponding to the sketch in \Cref{valsketch} is given in \Cref{pointedlattice}.

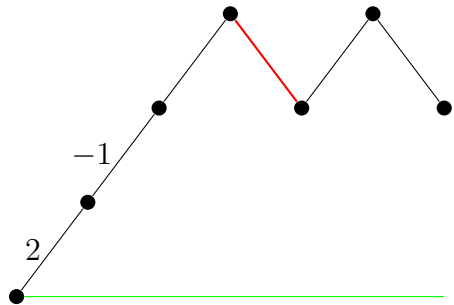
\begin{figure}[H]
    \centering
    \begin{tikzpicture}[xscale=0.75,scale=1.25]
    \draw [thin,green] (0,0)--(6,0);
    \node (0) [circle,inner sep=2pt,fill=black] at (0,0) {};
    \node (1) [circle,inner sep=2pt,fill=black] at (1,1) {};
    \node (2) [circle,inner sep=2pt,fill=black] at (2,2) {};
    \node (3) [circle,inner sep=2pt,fill=black] at (3,3) {};
    \node (4) [circle,inner sep=2pt,fill=black] at (4,2) {};
    \node (5) [circle,inner sep=2pt,fill=black] at (5,3) {};
    \node (6) [circle,inner sep=2pt,fill=black] at (6,2) {};
    \draw (0)--node[left]{$2$}(1)--node[left]{$-1$}(2)--(3);
    \draw[thick, color = red] (3)--(4);
    \draw (4)--(5)--(6);
    \end{tikzpicture}
    \caption{Pointed lattice path corresponding to the pointed sketch in \Cref{valsketch}.}
    \label{pointedlattice}
\end{figure}

\begin{theorem}\label{pointedsymcount}
The number of pointed symmetric sketches, which is the number of regions of $\Po_n$, is
\begin{equation*}
    2^n n! \binom{2n + 2}{n}.
\end{equation*}
\end{theorem}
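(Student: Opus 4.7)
The plan is to invoke \Cref{pospoi}: each pointed symmetric sketch is uniquely determined by a pair consisting of an admissible pointed $\alpha, \beta$-word of length $2n+2$ and a signed permutation of $[n]$. Since there are $2^n n!$ signed permutations, it suffices to show that the number $P_n$ of admissible pointed $\alpha, \beta$-words equals $\binom{2n+2}{n}$.

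For $P_n$, I would decompose each pointed word at its distinguished $\beta$, which sits at some position $k = n+2+j$ with $0 \leq j \leq n$. The prefix (of length $n+1+j$) is a ballot sequence with $n+1$ $\alpha$'s and $j$ $\beta$'s, while the suffix (of length $n-j$) is an arbitrary $\alpha, \beta$-word. The suffix inherits no further constraint since the height immediately after the pointed $\beta$ is $n-j$, so the remaining $n-j$ steps cannot drop below $0$. Applying the reflection principle to count the ballot prefixes and noting that there are $2^{n-j}$ suffixes yields
\begin{equation*}
    P_n = \sum_{j=0}^{n} \left[\binom{n+1+j}{j} - \binom{n+1+j}{j-1}\right] 2^{n-j}.
\end{equation*}

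The remaining step, and the main obstacle, is the algebraic identity $P_n = \binom{2n+2}{n}$. I plan to prove it by induction on $n$; the base case $n=0$ gives $P_0 = 1 = \binom{2}{0}$. For the inductive step, I would exploit the ballot-number recursion
\begin{equation*}
    \binom{n+1+j}{j} - \binom{n+1+j}{j-1} = \bigl[\tbinom{n+j}{j} - \tbinom{n+j}{j-1}\bigr] + \bigl[\tbinom{n+j}{j-1} - \tbinom{n+j}{j-2}\bigr],
\end{equation*}
which is a direct application of Pascal's identity, to split the summand and derive $P_n = 4P_{n-1} + 2B(n,n) - B(n+1,n)$, where $B(p,q) := \binom{p+q}{q} - \binom{p+q}{q-1}$. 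Substituting the inductive hypothesis $P_{n-1} = \binom{2n}{n-1}$ and repeatedly applying Pascal's identity, the right-hand side simplifies to $\binom{2n}{n} + 2\binom{2n}{n-1} + \binom{2n}{n-2}$, which is exactly $\binom{2n+2}{n}$, closing the induction. A purely bijective proof with the $\binom{2n+2}{n}$ lattice paths from $(0,0)$ to $(2n+2, 2)$ would be more elegant, but since the total number of $\beta$'s in a pointed ballot word varies with $j$ while the target paths have exactly $n$ down-steps, no naive bijection presents itself, and the algebraic route by induction seems most efficient.
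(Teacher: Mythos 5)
Your proof is correct, and it takes a genuinely different route from the paper. You decompose each pointed $\alpha,\beta$-word at the distinguished $\beta$ into a ballot prefix (with $n+1$ $\alpha$'s and $j$ $\beta$'s) followed by an unconstrained suffix of length $n-j$, which yields the sum $P_n = \sum_{j=0}^{n}\bigl[\binom{n+1+j}{j} - \binom{n+1+j}{j-1}\bigr]2^{n-j}$, and you then verify $P_n = \binom{2n+2}{n}$ by induction. The paper instead sets up a single clean bijection: replacing the pointed $\beta$ by an $\alpha$ matches pointed words with ballot sequences of length $2n+2$ having at least $n+2$ $\alpha$-letters (equivalently, lattice paths that do not return to the $x$-axis at the end), and then the count falls out immediately as $\binom{2n+2}{n+1} - \frac{1}{n+2}\binom{2n+2}{n+1} = \binom{2n+2}{n}$. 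Both are valid; the paper's bijection is shorter and requires no algebraic identity, while your decomposition is more explicit about the structure around the pointed $\beta$ but pushes the work into a binomial recursion. Note that your concluding remark sells the bijective route short: the paper's "replace the pointed $\beta$ by an $\alpha$" is exactly the naive bijection you suspected might not exist, and you could substitute it for the inductive identity to streamline your argument.
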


\begin{proof}
Since there is no condition on the signed permutations, we just have to count the $\alpha,\beta$-words of the form mentioned in \Cref{pospoi}. 
We show that these words are in bijection with $\alpha,\beta$-words of length $(2n + 2)$ with any prefix having at least as many $\alpha$-letters as $\beta$-letters that have at least $(n + 2)$ $\alpha$-letters. 
This means that their corresponding lattice paths do not end on the $x$-axis. 
This will prove the required result since the number of such words, using \Cref{countabtypc} and the fact that Catalan numbers count Dyck paths, is
\begin{equation*}
    \binom{2n + 2}{n + 1} - \frac{1}{n + 2}\binom{2n + 2}{n + 1} = \binom{2n + 2}{n}.
\end{equation*}

Given a pointed $\alpha,\beta$-word, we replace the pointed $\beta$-letter with an $\alpha$-letter to obtain an $\alpha,\beta$-word of the type described above. 
Starting with an $\alpha,\beta$-word with at least $(n + 2)$ $\alpha$-letters, changing the $(n + 2)^{th}$ $\alpha$-letter to a $\beta$ and pointing to it gives a pointed $\alpha,\beta$-word. 
This gives us the required bijection.
\end{proof}

\begin{theorem}\label{bhabdd}
The number of bounded regions of $\Po_n$ is
\begin{equation*}
    2^n n! \binom{2n + 1}{n + 1}.
\end{equation*}
\end{theorem}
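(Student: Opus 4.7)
The plan is to adapt the proof of \Cref{typeCbdd} to the pointed setting. First, since $\Po_n$ contains the Boolean sub-arrangement $\{x_i = 0 : i \in [n]\}$, it is essential of rank $n$, so boundedness of a region is equivalent to boundedness as a subset of $\R^n$.

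The key claim is that a region $R$ of $\Po_n$ is bounded if and only if its associated unpointed lattice path of length $2n+2$ (from the bijection in the proof of \Cref{pointedsymcount}) touches the $x$-axis only at the origin. For the forward direction, suppose the path returns to the $x$-axis at some position $2k$ with $0 < k \leq n$. Then the first $2k$ letters of the pointed sketch form a balanced block. Since the pointed $\beta$-letter (namely $\alpha_-^{(-0.5)}$) cannot lie in this prefix without violating the ballot property of the pointed word, neither $-1/2$ nor its fixed partner $-3/2$ lies in the block, so all $k$ matched pairs inside are variable pairs of the form $(x_i, x_i + 1)$ with $x_i < -5/2$, or $(-x_j - 1, -x_j)$ with $x_j > 3/2$. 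Translating these coordinates uniformly toward $\mp \infty$ stays within $R$ (the sums $x_i + x_j$ are already past every hyperplane $x_i + x_j = -2, -1, 0$ involving pairs in the block, and the remaining hyperplanes are unaffected), so $R$ is unbounded. Conversely, if the path touches the $x$-axis only at the origin, every variable pair is linked with the fixed pair through the arc diagram, so each coordinate is bounded and hence $R$ is bounded.

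Counting such lattice paths is a standard reflection-principle calculation. The first step must be an up-step, and after removing it we obtain a bijection with lattice paths of length $2n+1$ starting at the origin that stay weakly above the $x$-axis. By parity, such paths automatically end at strictly positive height, so the ``at least $n+2$ up-steps'' condition from \Cref{pointedsymcount} comes for free. The telescoping sum of differences $\binom{2n+1}{n+j+1} - \binom{2n+1}{n+j+2}$ over the possible ending heights (as in the proof of \Cref{countabtypc}) collapses to $\binom{2n+1}{n+1}$. Multiplying by the factor $2^n n!$ for the signed-permutation labelings yields the desired count $2^n n! \binom{2n+1}{n+1}$.

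The main obstacle is the boundedness characterization, particularly the converse direction: showing that any region whose unpointed lattice path touches the $x$-axis only at the origin is indeed bounded in every coordinate. The delicate part is using the fixed arcs $(-3/2, -1/2)$ and $(+1/2, +3/2)$ to propagate boundedness to all variable arcs through the nested/crossing structure of the arc diagram, especially given the asymmetry between the hyperplane $x_i = -5/2$ being present while $x_i = 5/2$ is absent in $\Po_n$.
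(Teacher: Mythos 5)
Your proof is correct and follows essentially the same route as the paper: boundedness is characterized by the unpointed lattice path of length $2n+2$ touching the $x$-axis only at the origin, and such paths are counted by the reflection principle, then multiplied by $2^n n!$. The paper phrases the characterization first for pointed paths (touching the axis only at $0$ and possibly at $(2n+2,0)$) via the interlinked arc-diagram criterion, handles the borderline case where the pointed arc $(-1/2,1/2)$ bridges two mirror pieces, and then passes to the unpointed criterion through the bijection of \Cref{pointedsymcount}, leaving the verifications as ``it can be checked''. Your forward direction actually adds something the paper omits: you construct an explicit unbounded ray from the initial balanced block (decrease the positively-subscripted block coordinates, increase the negatively-subscripted ones). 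To make that check complete you should also verify the hyperplanes $x_i\pm x_k$ with exactly one index in the block and the single-variable hyperplanes $x_i=c$ for block indices, which \emph{are} affected by the translation but stay strictly satisfied because every block letter precedes $-3/2$ in the sketch — your parenthetical only addresses the $x_i+x_j$ hyperplanes within the block. Your converse direction is stated at roughly the paper's level of detail and is correct: since the arc diagram is a connected chain of unit-length arcs containing the fixed arcs $(-3/2,-1/2)$, $(-1/2,1/2)$, $(1/2,3/2)$, every coordinate is trapped in a window of width at most $2n+3$. One cosmetic slip: the clean reason the pointed $\beta$ cannot lie in the first $2k$ positions is that it has exactly $n+1$ $\alpha$-letters before it, while the balanced prefix of length $2k\le 2n$ contains only $k\le n$; the ballot-property argument you cite is valid but more indirect.
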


\begin{proof}
Just as for type $C$ regions, the region corresponding to a pointed sketch is bounded if and only if its arc diagram is interlinked. 
Also, the signed permutation does not play a role in determining if a region is bounded. 
Note that in this case, there is an arc joining a $\beta$-letter between the $(n + 1)^{th}$ and $(n + 2)^{th}$ $\alpha$-letter to its mirror image. 
If the arc diagram obtained by deleting this arc from the pointed $\beta$-letter is interlinked, then clearly so was the initial arc diagram. 
However, even if the arc diagram consists of two interlinked pieces when the arc from the pointed $\beta$-letter is removed (one on either side of the reflecting line), the corresponding region would still be bounded. 
Examining the bijection between arc diagrams and lattice paths, it can be checked that this means that pointed lattice paths corresponding to bounded regions are those that never touch the $x$-axis after the origin except maybe at $(2n + 2, 0)$.

Using the bijection mentioned in the proof of \Cref{pointedsymcount}, we can see that the pointed $\alpha,\beta$-words corresponding to bounded regions are in bijection $\alpha,\beta$-words whose lattice paths never touch the $x$-axis after the origin. 
We have already counted such paths in \Cref{typeCbdd} and their number is
\begin{equation*}
    \binom{2n + 1}{n + 1}.
\end{equation*}
This gives the required result.
\end{proof}

\subsection{Type B Catalan}

Fix $n \geq 1$. 
The type $B$ Catalan arrangement in $\mathbb{R}^n$ has the hyperplanes
\begin{align*}
    X_i&=-1,0,1\\
    X_i+X_j&=-1,0,1\\
    X_i-X_j&=-1,0,1
\end{align*}
for all $1 \leq i < j \leq n$. 
Translating this arrangement by setting $X_i=x_i+\frac{1}{2}$, we get the arrangement $\B_n$ with hyperplanes
\begin{align*}
    x_i&=-\frac{3}{2},-\frac{1}{2},\frac{1}{2}\\
    x_i+x_j&=-2,-1,0\\
    x_i-x_j&=-1,0,1
\end{align*}
for all $1 \leq i< j \leq n$. 
We consider $\B_n$ as a sub-arrangement of $\Po_n$. 
The hyperplanes missing from $\Po_n$ are
\begin{align*}
    x_i=-\frac{5}{2},-1,0,\frac{3}{2}
\end{align*}
for all $i \in [n]$.
Hence the moves on pointed sketches corresponding to changing one of the inequalities associated to these hyperplanes are as follows:
\begin{enumerate}
    \item Corresponding to $x_i=0$, $x_i=-1$: Swapping to $(2n+2)^{th}$ and $(2n+3)^{th}$ letter if they are not \lt{-}{-0.5} and \lt{+}{0.5}.
    
    \item Corresponding to $x_i=-\frac{5}{2}$, $x_i=\frac{3}{2}$: Swapping the pointed $\beta$, that is, \lt{-}{-0.5} and a $\beta$-letter immediately before or after it (and making the corresponding change in the second half).
\end{enumerate}

We can see that such moves change the pointed $\alpha,\beta$-word associated to a sketch by at most changing the last letter or changing which of the $\beta$-letters between the $(n+1)^{th}$ and $(n+2)^{th}$ $\alpha$-letter is pointed to.
So if we force that the last letter of the sketch has to be a $\beta$-letter and that the $\beta$-letter immediately after the $(n+1)^{th}$ $\alpha$-letter has to be pointed to, we get a canonical sketch in each equivalence class.
We will call such sketches type $B$ sketches.

\begin{theorem}
The number of type $B$ sketches, which is the number of regions of $\B_n$, is
\begin{equation*}
    2^nn!\binom{2n}{n}.
\end{equation*}
\end{theorem}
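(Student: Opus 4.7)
The plan is to count the pointed $\alpha,\beta$-words underlying type $B$ sketches and then multiply by the $2^n n!$ signed permutations (which by \Cref{pospoi} remain unconstrained). I claim these $\alpha,\beta$-words are in bijection with ballot $\alpha,\beta$-words of length $2n$, of which there are $\binom{2n}{n}$ by the same reflection-principle argument as in \Cref{countabtypc}. Combining these two factors immediately yields the desired formula $2^n n!\binom{2n}{n}$.

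To construct the bijection $\Phi$, take a type $B$ pointed $\alpha,\beta$-word $w$ of length $2n+2$ and let $p$ denote the position of the $(n+1)$th $\alpha$-letter, so the pointed $\beta$ sits at position $p+1$. If $p+1<2n+2$, delete positions $p+1$ and $2n+2$ (both are $\beta$-letters by the type $B$ condition that the last letter is a $\beta$-letter) to obtain $\Phi(w)$. If $p+1=2n+2$, the $(n+1)$th $\alpha$-letter is at position $2n+1$; in this case I delete positions $2n+1$ and $2n+2$ instead. A short lattice-path height computation shows $\Phi(w)$ is a ballot path of length $2n$ in either case. Moreover, in the first case $\Phi(w)$ retains all the $\alpha$-letters of $w$ (at least $n+1$ of them), hence is non-Dyck; in the second case $\Phi(w)$ has exactly $n$ $\alpha$-letters, hence is a Dyck path.

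For the inverse, given a ballot path $P$ of length $2n$, I split by whether $P$ is Dyck. If $P$ is a Dyck path, set $\Phi^{-1}(P)=P\alpha\beta$ with the final $\beta$ marked as the pointed one. If $P$ is not Dyck, its $(n+1)$th $\alpha$-letter exists at some position $p\leq 2n$; insert a pointed $\beta$-letter immediately after position $p$ and append a $\beta$-letter at the end. The main obstacle is verifying that this insertion preserves the ballot condition: I would use the observation that from the $(n+1)$th $\alpha$-letter of $P$ (occurring at height $2n+2-p$) to the end of $P$, the height of $P$ remains at least $2$, because the remaining $2n-p$ steps cannot drop the height by more than $2n-p$. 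Granted this, the inserted pointed $\beta$ drops each subsequent height by exactly $1$ and the ballot condition is maintained. Checking that the two branches of $\Phi^{-1}$ exactly match the Dyck/non-Dyck dichotomy on ballot paths, and that the resulting words satisfy all conditions of a type $B$ sketch (pointed $\beta$ right after the $(n+1)$th $\alpha$-letter, last letter a $\beta$-letter, ballot condition), completes the proof.
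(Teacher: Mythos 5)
Your proposal is correct and uses essentially the same bijection as the paper: delete the pointed $\beta$ and the last letter to obtain a ballot word of length $2n$ (with the special case where the $(n+1)$th $\alpha$-letter sits at position $2n+1$, in which case the pointed $\beta$ coincides with the last letter, so one instead deletes the last two letters), and observe that the two cases land exactly on the non-Dyck and Dyck ballot words of length $2n$, respectively. The only difference is that you explicitly spell out the inverse map and verify the ballot condition under insertion, which the paper leaves implicit.
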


\begin{proof}
Since there is no condition on the signed permutation, we count the $\alpha,\beta$-words associated to type $B$ sketches.
From \Cref{pospoi}, we can see that the $\alpha,\beta$-words we need to count are those that satisfy the following properties:

\begin{enumerate}
    \item Length of the word is $(2n+2)$.
    \item In any prefix, there are at least as many $\alpha$-letters as $\beta$-letters.
    \item The letter immediately after the $(n+1)^{th}$ $\alpha$-letter is a $\beta$ (pointed $\beta$).
    \item The last letter is a $\beta$.
\end{enumerate}

We exhibit a bijection between these words and $\alpha,\beta$-words of length $2n$ that satisfy property 2. 
We already know, from \Cref{countabtypc}, that the number of such words is $\binom{2n}{n}$ and so this will prove the required result.

If the $(n+1)^{th}$ $\alpha$-letter is at the $(2n+1)^{th}$ position, deleting the last two letters gives us an $\alpha,\beta$-word of length $2n$ with $n$ $\alpha$-letters that satisfies property 2.
If the $(n+1)^{th}$ $\alpha$-letter is not at the $(2n+1)^{th}$ position, we delete the $\beta$-letter after it as well as the last letter of the word. 
This gives us an $\alpha,\beta$-word of length $2n$ with more than $n$ $\alpha$-letters that satisfies property 2.
The process described gives us the required bijection.
\end{proof}

\begin{theorem}\label{typBbdd}
The number of bounded regions of $\B_n$ is
\begin{equation*}
    2^nn!\binom{2n - 1}{n}.
\end{equation*}
\end{theorem}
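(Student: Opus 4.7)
I plan to follow the strategy of the earlier bounded-region results (\Cref{typeCbdd}, \Cref{typDbdd}, \Cref{bhabdd}): first characterize bounded $\B_n$ regions by a walk condition on their canonical type $B$ sketches, then restrict the bijection from the preceding theorem (counting all regions of $\B_n$) to that bounded subfamily.

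For the first step, since $\B_n$ has rank $n$ and is essential, a $\B_n$ region is bounded iff every $\Po_n$ region in its equivalence class under the $\B_n$-moves is bounded. The two kinds of $\B_n$-moves only alter the last letter of the first half of a pointed sketch and the position of the pointed $\beta$, and neither of these operations changes the heights $h_0, h_1, \ldots, h_{2n+1}$ of the underlying lattice path. Combined with the criterion from \Cref{bhabdd} for bounded $\Po_n$ regions (lattice path avoids the $x$-axis after the origin, except possibly at $(2n+2,0)$), I conclude that a $\B_n$ region is bounded iff its canonical type $B$ sketch has $h_i \geq 1$ for all $i \in \{1, \ldots, 2n+1\}$.

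Next I restrict the bijection from the preceding theorem (canonical type $B$ sketches $\leftrightarrow$ ballot $\alpha,\beta$-words of length $2n$) to this bounded subfamily. A bounded sketch cannot fall in case A of that bijection's case split, since case A forces $h_{2n}=0$. Writing $\tilde{h}$ for the walk of the reduced word and $p$ for the position of the $(n+1)$-th $\alpha$, a direct computation yields $\tilde{h}_i = h_i$ for $i \leq p$ and $\tilde{h}_i = h_{i+1}+1$ for $i > p$; both are $\geq 1$ whenever the original walk is bounded, so the reduced walk stays $\geq 1$ after the origin. Conversely, if the reduced walk stays $\geq 1$ after the origin, the reconstructed original is bounded: for $i \leq p$ this is immediate, while for $i > p$ the crucial observation is $\tilde{h}_i = 2a_i - i \geq 2(n+1) - 2n = 2$ (using $a_i \geq n+1$ and $i \leq 2n$), so $h_{i+1} = \tilde{h}_i - 1 \geq 1$.

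To finish, I will apply the count from the proof of \Cref{typeCbdd}: the number of ballot lattice paths of length $2n$ that never touch the $x$-axis after the origin is $\binom{2n-1}{n}$, obtained by deleting the forced first up-step and reducing to a ballot walk of length $2n-1$. Multiplying by $2^n n!$ for the freely chosen signed permutation then yields the claimed formula $2^n n! \binom{2n-1}{n}$. The main obstacle is the reverse direction of the restricted bijection: it is not obvious that ``$\tilde{h}_i \geq 1$ for all $i > 0$'' suffices for the reconstruction to be bounded, since a naive bound would require $\tilde{h}_i \geq 2$ beyond position $p$. The saving grace is that this stronger inequality is automatic from $a_i \geq n+1$ at all $i \geq p$, which is precisely where the placement of the $(n+1)$-th $\alpha$ at position $p$ enters the argument.
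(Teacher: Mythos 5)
Your argument is correct and takes a route genuinely different from the paper's. The paper instead counts pointed lattice paths of length $2n+2$ satisfying the boundedness criterion, i.e.\ $\binom{2n+1}{n+1} + \frac{1}{n+1}\binom{2n}{n}$ of them, and divides by four — using the two involutions that flip the step after the $(n+1)^{\text{th}}$ up-step and flip the last step — to impose the two defining constraints of a type $B$ sketch, then simplifies that expression algebraically to $\binom{2n-1}{n}$. You instead restrict the explicit deletion bijection from the preceding theorem to the bounded subfamily, track the walk heights through it via $\tilde h_i = h_i$ for $i \le p$ and $\tilde h_i = h_{i+1}+1$ for $i > p$, and land directly on $\binom{2n-1}{n}$ as the count of ballot walks of length $2n$ staying strictly positive after the origin (already computed in the proof of \Cref{typeCbdd}). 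The payoff is a cleaner proof that produces the closed form without an algebraic simplification step; the paper's divide-by-four trick is more mechanical but carries over to the $\BC_n$ case with fewer changes.

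One small indexing slip in your converse direction: the inequality $\tilde h_i = 2a_i - i \ge 2$ is stated for $i > p$, which controls $h_{p+2},\dots,h_{2n+1}$ but leaves $h_{p+1}$ (the height just after the inserted pointed $\beta$) unaccounted for. The fix is immediate — the same bound holds at $i=p$, since $a_p = n+1$ and $p \le 2n$ in case B, giving $\tilde h_p \ge 2$ and hence $h_{p+1} = \tilde h_p - 1 \ge 1$ — so widening the range to $i \ge p$ closes the gap.
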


\begin{proof}
Both $\B_n$ and $\Po_n$ have rank $n$. 
Hence a region of $\B_n$ if bounded if and only if all regions of $\Po_n$ that it contains are bounded.

In the proof of \Cref{bhabdd} we have characterized the pointed $\alpha,\beta$-words associated to bounded regions of $\Po_n$. 
These are the pointed lattice paths of length $(2n + 2)$ that satisfy the following properties (irrespective of the position of the pointed $\beta$):
\begin{enumerate}
    \item The step after the $(n + 1)^{th}$ up-step is a down step (for there to exist a pointed $\beta$).
    \item The path never touches the $x$-axis after the origin expect maybe at $(2n + 2, 0)$.
\end{enumerate}

We noted in \Cref{typDbdd} that lattice paths satisfying property 2 are closed under action of changing the letter after the $(n + 1)^{th}$ up-step as well as the action of changing the last step. 
This shows that the regions of $\Po_n$ that lie inside a region of $\B_n$ are either all bounded or all unbounded. 
Hence the number of bounded regions of $\B_n$ is just the number of type $B$ sketches whose corresponding lattice path satify property 1 and 2, which is
\begin{equation*}
    2^n n! \cdot \frac{1}{4} \cdot \left( \binom{2n + 1}{n + 1} + \frac{1}{n + 1} \binom{2n}{n} \right).
\end{equation*}
This simplifies to give the required result.
% \textcolor{red}{K: Consider the full total order arrangement. 
% Look at arc diagram. 
% Break up into interlinked parts ignoring the rigid arc (between $\pm 0.5$). 
% If only one part, already bounded. 
% If not, then it can have at most one (total 2 because of reflection) with the rigid arc causing the boundedness. 
% So we can see that all arcs corresponding to a given type B region are either bounded or all unbounded. 
% So we just have to count the number of $\alpha, \beta$-words of length $(2n + 2)$ where
% \begin{enumerate}
%     \item the letter after $(n + 1)^{th}$ $\alpha$ is a $\beta$,
%     \item last letter is $\beta$, and
%     \item the path either never touches the $x$-axis after the origin or only at $(2n + 2, 0)$.
% \end{enumerate}
% We have already seen in type $D$ bounded count that changing the last letter (respectively, the letter after $(n + 1)^{th}$ $\alpha$) in words that satisfy property 3 is an involution. 
% Hence the number of bounded regions is
% \begin{equation*}
%     2^n n! \cdot \frac{1}{4} \cdot \left( \binom{2n + 1}{n + 1} + \frac{1}{n + 1} \binom{2n}{n} \right)
% \end{equation*}
% which simplifies to give the required result.
% }
\end{proof}

\subsection{Type BC Catalan}

The type $BC$ Catalan arrangement in $\mathbb{R}^n$ has hyperplanes
\begin{align*}
    X_i&=-1,0,1\\
    2X_i&=-1,0,1\\
    X_i+X_j&=-1,0,1\\
    X_i-X_j&=-1,0,1
\end{align*}
for all $1 \leq i < j \leq n$. 
Translating this arrangement by setting $X_i=x_i+\frac{1}{2}$, we get the arrangement $\BC_n$ with hyperplanes
\begin{align*}
    x_i&=-\frac{3}{2},-1,-\frac{1}{2},0,\frac{1}{2}\\
    x_i+x_j&=-2,-1,0\\
    x_i-x_j&=-1,0,1
\end{align*}
for all $1 \leq i< j \leq n$.
Again, we consider this arrangement as a sub-arrangement of $\Po_n$.
To define moves on pointed sketches, note that the hyperplanes missing from $\Po_n$ are
\begin{align*}
    x_i=-\frac{5}{2},\frac{3}{2}
\end{align*}
for all $i \in [n]$.
Hence, the moves on pointed sketches corresponding to changing the inequalities associated to these hyperplanes are of the following form:
Swapping the pointed $\beta$, that is, \lt{-}{-0.5} and a $\beta$-letter immediately before or after it (and making the corresponding change in the second half).

We can see that such moves change the pointed $\alpha,\beta$-word associated to a sketch by at most changing which of the $\beta$-letters between the $(n+1)^{th}$ and $(n+2)^{th}$ $\alpha$-letter is pointed to.
So if we force that the $\beta$-letter immediately after the $(n+1)^{th}$ $\alpha$-letter has to be pointed to, we get a canonical sketch in each equivalence class.
We will call such sketches type $BC$ sketches.

\begin{theorem}
The number of type $BC$ sketches, which is the number of regions $\BC_n$ is
\begin{equation*}
    2^{n-1}n!\binom{2n+2}{n+1}.
\end{equation*}
\end{theorem}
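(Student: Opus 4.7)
The plan is to mirror the proof of the preceding theorem on $\B_n$: the signed permutation factor of $2^n n!$ is free, so it suffices to count the pointed $\alpha,\beta$-words that correspond to type $BC$ sketches. By \Cref{pospoi} together with our canonical choice of pointed $\beta$, these are exactly the pointed $\alpha,\beta$-words of length $2n+2$ in which every prefix has at least as many $\alpha$-letters as $\beta$-letters and whose pointed $\beta$ is the letter immediately after the $(n+1)^{th}$ $\alpha$-letter. Unlike in the $\B_n$ case, no condition is placed on the last letter of the word.

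I would then set up a bijection between these words and the ordinary $\alpha,\beta$-words of length $2n+1$ satisfying only the ballot condition. In one direction, delete the pointed $\beta$; in the other, given such a length-$(2n+1)$ ballot sequence $Q$, locate its $(n+1)^{th}$ $\alpha$-letter (which exists because every length-$(2n+1)$ ballot sequence contains at least $n+1$ $\alpha$-letters) and insert a $\beta$ immediately after it, declaring the inserted letter to be the point. Deletion preserves the ballot condition trivially, so the work is in the other direction.

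The hard part will be verifying that the insertion preserves the ballot condition on prefixes extending past the inserted $\beta$. Writing $q$ for the position of the $(n+1)^{th}$ $\alpha$-letter of $Q$, the height of $Q$ at position $q$ (the excess of $\alpha$'s over $\beta$'s) equals $(n+1)-(q-n-1)=2n+2-q$. If this height were to drop to $0$ at some later position $i\leq 2n+1$, then at least $2n+2-q$ of the letters at positions $q+1,\ldots,i$ would need to be $\beta$'s, forcing $i-q\geq 2n+2-q$ and hence $i\geq 2n+2$, contradicting $i\leq 2n+1$. Thus the height of $Q$ stays at least $1$ throughout $[q+1,2n+1]$, so inserting a $\beta$ yields a valid length-$(2n+2)$ ballot sequence whose pointed $\beta$ sits immediately after the $(n+1)^{th}$ $\alpha$-letter, as required.

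To finish, the number of length-$(2n+1)$ ballot sequences over $\{\alpha,\beta\}$ equals $\binom{2n+1}{n}$, by the same reflection-principle argument as in \Cref{countabtypc} (the telescoping sum now runs over odd ending heights). Multiplying by the $2^n n!$ contribution from signed permutations and using the identity $\binom{2n+2}{n+1}=2\binom{2n+1}{n}$ yields
\[
2^n n!\,\binom{2n+1}{n}\;=\;2^{n-1}n!\,\binom{2n+2}{n+1},
\]
which is the claimed count of regions of $\BC_n$.
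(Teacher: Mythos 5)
Your proof is correct, and it differs in a small but genuine way from the paper's argument. The paper counts the same set of words (length-$(2n+2)$ ballot words whose letter immediately after the $(n+1)^{\mathrm{th}}$ $\alpha$ is a $\beta$) by exhibiting an involution on all $\binom{2n+2}{n+1}$ length-$(2n+2)$ ballot words, namely flipping the letter just after the $(n+1)^{\mathrm{th}}$ $\alpha$; this pairs the desired words with the complementary ones, giving $\tfrac{1}{2}\binom{2n+2}{n+1}$ directly. You instead delete the pointed $\beta$ to get a bijection with length-$(2n+1)$ ballot words, of which there are $\binom{2n+1}{n}$, and then invoke $\binom{2n+2}{n+1}=2\binom{2n+1}{n}$. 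The two routes are essentially equivalent in content, but your deletion/insertion bijection has one advantage: you explicitly verify that the insertion preserves the ballot condition (the height computation showing the path stays strictly positive beyond the insertion point), whereas the paper's involution also needs a check that flipping the letter at position $q+1$ from $\alpha$ to $\beta$ preserves the ballot condition, and this is stated without justification. (It is true, and for the same reason your height estimate works, but it is not entirely trivial: one must also use that the $(n+1)^{\mathrm{th}}$ $\alpha$ cannot occupy the final position of a length-$(2n+2)$ ballot word, so there is always a letter to flip.) Your version makes that bookkeeping explicit, which is a mild improvement in rigor; the paper's version is slightly more economical.
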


\begin{proof}
Since there is no condition on the signed permutation for type $BC$ sketches, we count the number of $\alpha,\beta$-words that satisfy the following properties:
\begin{enumerate}
    \item Length of the word is $(2n+2)$.
    \item In any prefix, there are at least as many $\alpha$-letters as $\beta$-letters.
    \item The letter immediately after the $(n+1)^{th}$ $\alpha$-letter is a $\beta$ (pointed $\beta$).
\end{enumerate}

Using the involution on the set of words satisfying properties 1 and 2 of changing the letter immediately after the $(n+1)^{th}$ $\alpha$-letter and the fact that there are $\binom{2n+2}{n+1}$ words satisfying properties 1 and 2, we get that the number of words satisfying the required properties is
\begin{equation*}
    \frac{1}{2} \cdot \binom{2n+2}{n+1}.
\end{equation*}
This gives the required result.
\end{proof}

\begin{theorem}
The number of bounded regions $\BC_n$ is
\begin{equation*}
    2^nn!\binom{2n}{n}.
\end{equation*}
\end{theorem}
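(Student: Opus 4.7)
The plan is to follow the template of \Cref{typBbdd}. Since both $\BC_n$ and $\Po_n$ are essential of rank $n$, a region of $\BC_n$ is bounded if and only if every region of $\Po_n$ inside it is bounded. The $\BC$-moves on pointed sketches only relocate the pointed $\beta$ within its block of $\beta$-letters between the $(n+1)^{\text{th}}$ and $(n+2)^{\text{th}}$ $\alpha$-letters, so they preserve the underlying (unpointed) lattice path. By \Cref{bhabdd}, boundedness of a region of $\Po_n$ depends only on this unpointed path (it must avoid the $x$-axis after the origin, apart from possibly the terminal point $(2n+2,0)$); hence all regions of $\Po_n$ inside one region of $\BC_n$ are simultaneously bounded or unbounded. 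The number of bounded regions of $\BC_n$ is therefore $2^n n!$ times the number of $\alpha,\beta$-words $w$ of length $2n+2$ satisfying (I) the prefix property, (II) the letter immediately after the $(n+1)^{\text{th}}$ $\alpha$-letter of $w$ is a $\beta$, and (III) the lattice path of $w$ touches the $x$-axis at none of the positions $1,\ldots,2n+1$.

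First I would count the words satisfying (I) and (III), temporarily ignoring (II). Such a path either ends at $(2n+2,0)$, in which case it is an indecomposable Dyck path of length $2n+2$ and there are $C_n=\frac{1}{n+1}\binom{2n}{n}$ of them, or it ends at positive height, in which case removing the forced initial up-step and shifting down by one produces an arbitrary ballot path of length $2n+1$ staying $\geq 0$, and there are $\binom{2n+1}{n+1}$ of these. The total is $\binom{2n+1}{n+1}+\frac{1}{n+1}\binom{2n}{n}$.

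Next I would introduce the involution $\sigma$ that swaps the letter at position $p+1$, where $p$ is the position of the $(n+1)^{\text{th}}$ $\alpha$-letter of $w$. Since the modification occurs strictly after position $p$, the position $p$ of the $(n+1)^{\text{th}}$ $\alpha$-letter is unchanged, and $\sigma$ pairs words satisfying (II) with words violating (II). The main technical step is to verify that $\sigma$ preserves (I) and (III). Property (I) is easy since $\sigma$ modifies the path height by $\pm 2$ only at positions $\geq p+1$. For (III) the nontrivial direction is $w_{p+1}=\alpha\mapsto w'_{p+1}=\beta$, where heights drop by $2$; the key observation is that if $w_{p+1}=\alpha$, then $w$ has at least $n+2$ $\alpha$-letters, so for every $j\in[p+1,2n+1]$ the first $j$ letters of $w$ contain at least $n+2$ $\alpha$'s and at most $j-n-2$ $\beta$'s, giving height at least $2n+4-j\geq 3$, and the endpoint has height at least $2$. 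Subtracting $2$ leaves the new heights $\geq 1$ at interior positions and $\geq 0$ at the endpoint, so $w'$ still satisfies (III). The reverse direction only raises heights and is immediate. Thus $\sigma$ is a fixed-point-free involution on words satisfying (I) and (III), and the number satisfying (I), (II), (III) is exactly half the total.

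Finally, a brief algebraic simplification over the common denominator $(n+1)!\,n!$ gives
\[
\frac{1}{2}\left(\binom{2n+1}{n+1}+\frac{1}{n+1}\binom{2n}{n}\right)
=\frac{(2n)!\,(2n+2)}{2\,(n+1)!\,n!}=\binom{2n}{n},
\]
and multiplying by $2^n n!$ yields $2^n n!\binom{2n}{n}$, as claimed. I expect the only subtle step to be the height lower bound in the verification that $\sigma$ preserves (III); once that is in place, the enumeration runs parallel to the proof of \Cref{typBbdd} (which is just one further halving by the involution on the last letter).
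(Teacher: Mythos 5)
Your proof is correct and follows the same route as the paper: both treat $\BC_n$ as a sub-arrangement of $\Po_n$, reduce boundedness to the lattice-path condition of \Cref{bhabdd}, and halve the resulting count via the involution that toggles the letter immediately after the $(n+1)^{\text{th}}$ $\alpha$-letter, yielding $\tfrac{1}{2}\bigl(\binom{2n+1}{n+1}+\tfrac{1}{n+1}\binom{2n}{n}\bigr)=\binom{2n}{n}$. The only substantive thing you add is the explicit height estimate (interior heights $\geq 3$ when the toggled letter is an $\alpha$) verifying that the avoidance condition is preserved by the involution, a point the paper delegates to ``one can check'' in the proof of \Cref{typDbdd}.
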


\begin{proof}
The proof of this result is very similar to that of \Cref{typBbdd}. 
Since type $BC$ sketches don't have the condition that the $2n^{th}$ letter should be a $\beta$-letter, the number of bounded regions of $\BC_n$ is
\begin{equation*}
    2^n n! \cdot \frac{1}{2} \cdot \left( \binom{2n + 1}{n + 1} + \frac{1}{n + 1} \binom{2n}{n} \right).
\end{equation*}
This simplifies to give the required result.
\end{proof}

\section{Statistics on regions via generating functions}\label{statsec}

As mentioned in \Cref{intro}, the characteristic polynomial of an arrangement $\A$ in $\R^n$ is of the form
\begin{equation*}
    \chi_\A(t) = \sum_{i=0}^n (-1)^{n-i} c_i t^i
\end{equation*}
where $c_i$ is a non-negative integer for all $0 \leq i \leq n$ and Zaslavsky's theorem tells us that
\begin{align*}
   r(\A) &= (-1)^n \chi_{\A}(-1) \\
         &= \sum_{i=0}^n c_i. 
\end{align*}
In this section, we interpret the coefficients of the characteristic polynomials of the arrangements we have studied. 
More precisely, for each arrangement we have studied, we first define a statistic on the objects that we have seen correspond to its regions. 
We then show that the distribution of this statistic is given by the coefficients of the characteristic polynomial.

We do this by giving combinatorial meaning to the exponential generating functions for the characteristic polynomials of the arrangements we have studied. 
To obtain these generating functions, we use \cite[Exercise 5.10]{sta_hyp}, which we state and prove for convenience.

\begin{definition}
A sequence of arrangements $(\A_1,\A_2,\ldots)$ is called a \emph{Generalized Exponential Sequence of Arrangements} (GESA) if
\begin{itemize}
    \item $\A_n$ is an arrangement in $\R^n$ such that every hyperplane is parallel to one of the form $x_i = cx_j$ for some $c \in \R$.
    \item For any $k$-subset $I$ of $[n]$, the arrangement
    \begin{equation*}
        \A_n^I = \{H \in \A_n \mid H \text{ is parallel to $x_i = cx_j$ for some $i,j \in I$ and some $c \in \R$}\}
    \end{equation*}
    satisfies $\ipa(\A_n^I) \cong \ipa(\A_k)$ (isomorphic as posets).
\end{itemize}
\end{definition}

Note that all the arrangements we have studied are GESAs.

\begin{proposition}\label{gesa}
Let $(\A_1,\A_2,\ldots)$ be a GESA, and define
\begin{equation*}
\begin{split}
    F(x) &= \sum_{n \geq 0} (-1)^n r(\A_n) \frac{x^n}{n!}\\
    G(x) &= \sum_{n \geq 0} (-1)^{\operatorname{rank}(\A_n)} b(\A_n) \frac{x^n}{n!}.
\end{split}
\end{equation*}
Then, we have
\begin{equation*}
    \sum_{n \geq 0} \chi_{\A_n}(t) \frac{x^n}{n!} = \frac{G(x)^{(t + 1) / 2}}{F(x)^{(t - 1) / 2}}.
\end{equation*}
\end{proposition}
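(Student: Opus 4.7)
The plan is to apply the exponential formula to the characteristic polynomials $\chi_{\A_n}(t)$ and show that the logarithm of their exponential generating function is an \emph{affine} function of $t$. Once this is established, the two coefficient series in front of $1$ and $t$ are pinned down by specializing to $t=-1$ and $t=1$ and invoking \Cref{zaslavsky}, which gives $\log F(x)$ and $\log G(x)$ respectively. Reassembling the affine exponent then produces the claimed product formula.

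To set this up, I would decompose the intersection poset by ``connected type''. For a flat $X$, define $\pi(X)$ to be the partition of $[n]$ in which $i\sim j$ precisely when the projection of $X$ to the $(x_i,x_j)$-plane is a proper subset of $\R^2$, and call $X$ \emph{connected} if $\pi(X)=\{[n]\}$. The GESA hypothesis forces the intersection poset to decompose as a product over the blocks of $\pi(X)$: flats with $\pi(X)=\{B_1,\dots,B_k\}$ are in bijection with tuples of connected flats in the sub-arrangements $\A_n^{B_i}\cong \A_{|B_i|}$, the M\"obius function factors across blocks, and dimensions add. Setting
\[
\chi^*_{\A_n}(t) := \sum_{X \text{ connected}} \mu(\hat 0, X)\, t^{\dim X},
\]
the decomposition reads $\chi_{\A_n}(t) = \sum_{\pi\vdash[n]}\prod_{B\in\pi}\chi^*_{\A_{|B|}}(t)$, so the exponential formula gives
\[
\sum_{n\geq 0}\chi_{\A_n}(t)\frac{x^n}{n!} \;=\; \exp\!\Bigl(\sum_{n\geq 1}\chi^*_{\A_n}(t)\frac{x^n}{n!}\Bigr).
\]

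The crucial observation is that every connected flat of $\A_n$ has dimension $0$ or $1$. Since every hyperplane of $\A_n$ has a defining equation involving at most two coordinates (either $x_i=cx_j+d$ or $x_i=d$), a flat amounts to a graph on $[n]$ whose edges carry affine relations; on each connected component, either all coordinates are determined up to one common parameter (dimension $1$), or at least one ``one-variable'' equation $x_i=d$ is present and then connectedness forces every coordinate in that component to be fixed (dimension $0$). Hence $\chi^*_{\A_n}(t) = \alpha_n + \beta_n\, t$ is affine in $t$, so $\sum_{n\geq 1}\chi^*_{\A_n}(t)\,x^n/n! = A(x)+tB(x)$ for some $A,B\in \Q[[x]]$. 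Evaluating the identity above at $t=-1$ and $t=1$ and applying \Cref{zaslavsky} gives $A-B=\log F$ and $A+B=\log G$, whence
\[
A(x)+tB(x) \;=\; \tfrac{t+1}{2}\log G(x)-\tfrac{t-1}{2}\log F(x),
\]
and exponentiating yields the desired equation.

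The main obstacle will be the structural step: verifying that the GESA axioms really do give the factorization $[\hat 0, X]\cong \prod_i [\hat 0, X_i]$ of intervals, so that the M\"obius function and the dimension both behave multiplicatively over the blocks of $\pi(X)$. Once this is in hand, the dimension count that makes $\chi^*_{\A_n}$ affine in $t$ and the final algebraic bookkeeping via Zaslavsky are routine.
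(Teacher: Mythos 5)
Your high-level plan coincides with the paper's: apply the exponential formula, argue that the logarithm of the generating function is affine in $t$, and pin down the two coefficient series by specializing at $t=\pm 1$ via Zaslavsky. The final bookkeeping is fine. However, the structural step — the one you yourself flag as the main obstacle — contains a genuine gap, and it is not fixable as stated.

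The problem is your definition of $\pi(X)$. Declaring $i\sim j$ whenever the projection of $X$ to the $(x_i,x_j)$-plane is a proper subset of $\R^2$ does not distinguish between ``$x_i$ and $x_j$ are linked by a two-variable relation'' and ``$x_i$ and $x_j$ are each \emph{separately} constrained.'' For example, the flat $X=\{x_1=0\}\cap\{x_2=0\}$ has projection $\{(0,0)\}$ to the $(x_1,x_2)$-plane, so your relation gives $1\sim 2$, even though $X$ factors as $\{0\}\times\{0\}\times\R^{n-2}$ and ought to be counted as a product over singletons. Two things break as a result. First, the bijection ``flats $\leftrightarrow$ (partition, tuple of connected flats)'' fails: the product construction applied to $(\{0\},\{0\})$ over the partition $\{\{1\},\{2\}\}$ produces the same flat that your criterion already labeled connected, so the decomposition identity $\chi_{\A_n}(t)=\sum_{\pi}\prod_{B\in\pi}\chi^*_{\A_{|B|}}(t)$ double-counts. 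Second, a ``connected'' flat in your sense can have any dimension — $\{0\}\times\{0\}\times\R^{n-2}$ has dimension $n-2$ and is connected by your criterion since, taking transitive closures, every pair gets linked through coordinates $1$ and $2$ — so the claim that $\chi^*_{\A_n}(t)$ is affine in $t$ also fails.

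A concrete counterexample: take $\A_2=\{x_1=0,\,x_2=0,\,x_1=x_2\}$ (and the corresponding GESA with $\A_1=\{x_1=0\}$). One computes $\chi_{\A_2}(t)=t^2-3t+2$ and $\chi_{\A_1}(t)=t-1$. With your definition every flat of $\A_2$ except $\R^2$ is ``connected,'' giving $\chi^*_{\A_2}(t)=-3t+2$ and $\chi^*_{\A_1}(t)=t-1$. Your claimed decomposition would then force
\[
\chi_{\A_2}(t)\;=\;\chi^*_{\A_2}(t)+\bigl(\chi^*_{\A_1}(t)\bigr)^2\;=\;(-3t+2)+(t-1)^2\;=\;t^2-5t+3,
\]
which is not $t^2-3t+2$.

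The deeper reason for the failure is that a flat does \emph{not} canonically carry a graph: the same flat can arise as the intersection of sub-arrangements whose ``relation graphs'' are connected or disconnected (in the example, $X=\{(0,0)\}$ is $\bigcap\{x_1=0,x_2=0\}$ with a disconnected graph, and also $\bigcap\{x_1=0,x_2=0,x_1=x_2\}$ with a connected graph). The paper sidesteps this by not working with flats at all: it starts from Whitney's theorem, $\chi_{\A_n}(t)=\sum_{\B\subseteq\A_n,\ \bigcap\B\neq\emptyset}(-1)^{\#\B}t^{n-\operatorname{rank}\B}$, and attaches a graph $G(\B)$ to each sub-arrangement $\B$, not to the flat $\bigcap\B$. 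Summing over sub-arrangements and grouping by the connected components of $G(\B)$ makes the exponential formula and the GESA isomorphism apply cleanly, and the rank of a sub-arrangement with connected graph is honestly $n$ or $n-1$. If you replace your flat decomposition with this Whitney-level decomposition, the rest of your argument goes through verbatim.
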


\begin{proof}
The idea of the proof is the same as that of \cite[Theorem 5.17]{sta_hyp}. 
By Whitney's Theorem \cite[Theorem 2.4]{sta_hyp}, we have for all $n$,
\begin{equation*}
    \chi_{\A_n}(t) = \sum_{\B \subseteq \A,\ \bigcap \B \neq \phi} (-1)^{\# \B} t^{n - \operatorname{rank}(\B)}.
\end{equation*}
To each $\B \subseteq \A_n$, such that $\bigcap \B \neq \phi$, we associate a graph $G(\B)$ on the vertex set $[n]$ where there is an edge between the vertices $i$ and $j$ if there is a hyperplane in $\B$ parallel to a hyperplane of the form $x_i = cx_j$ for some $c \in \R$.

Using \cite[Corollary 5.1.6]{sta_ec2}, we get
\begin{equation*}
    \sum_{n \geq 0} \chi_{\A_n}(t) \frac{x^n}{n!} = \operatorname{exp} \sum_{n \geq 1} \tilde{\chi}_{\A_n}(t) \frac{x^n}{n!}
\end{equation*}
where for any $n$ we define
\begin{equation*}
    \tilde{\chi}_{\A_n}(t) = \sum_{\substack{\B \subseteq \A,\ \bigcap \B \neq \phi\\G(\B)\text{ connected}}} (-1)^{\# \B} t^{n - \operatorname{rank}(\B)}.
\end{equation*}

Note that if $G(\B)$ is connected, then any point in $\bigcap \B$ is determined by any one of its coordinates, say $x_1$. 
This is because any path from the vertex $1$ to a vertex $i$ in $G(\B)$ can be used to determine $x_i$. 
This shows us that $\operatorname{rank}(\B)$ is either $n$ or $n - 1$. 
Hence, $\tilde{\chi_{\A_n}}(t) = c_nt + d_n$ for some $c_n, d_n \in \Z$. 
Setting
\begin{equation*}
\begin{split}
    \operatorname{exp}\sum_{n \geq 1} c_n \frac{x^n}{n!} &= \sum_{n \geq 0} b_n \frac{x^n}{n!}\\
    \operatorname{exp}\sum_{n \geq 1} d_n \frac{x^n}{n!} &= \sum_{n \geq 0} a_n \frac{x^n}{n!}\\
\end{split}
\end{equation*}
we get
\begin{equation*}
    \sum_{n \geq 0} \chi_{\A_n}(t) \frac{x^n}{n!} = \left(\sum_{n \geq 0} b_n \frac{x^n}{n!}\right)^{t}\left(\sum_{n \geq 0} a_n \frac{x^n}{n!}\right).
\end{equation*}
% \textcolor{red}{K: the above expression looks more statistic-oriented than the final form. Maybe try and see if this can be used instead?}
Substituting $t = 1$ and $t = -1$ and using \Cref{zaslavsky}, we obtain expressions for the exponential generating functions of $\{b_n\}$ and $\{c_n\}$ and this gives us the required result.
\end{proof}

Before looking at the characteristic polynomials of these arrangements, we recall a few results from \cite{sta_ec2}. 
Suppose that $c: \N \rightarrow \N$ is a function and for each $n,j \in \N$, we define
\begin{equation*}
    c_j(n)=\sum_{\{B_1,\ldots,B_j\} \in \Pi_n} c(|B_1|)\cdots c(|B_j|)
\end{equation*}
where $\Pi_n$ is the set of partitions of $[n]$.
Define for each $n \in \N$,
\begin{equation*}
    h(n) = \sum_{j=0}^{n}c_j(n).
\end{equation*}
From \cite[Example 5.2.2]{sta_ec2}, we know that in such a situation,
\begin{equation*}
    \sum_{n,j\geq 0}c_j(n)t^j\frac{x^n}{n!} = \left(\sum_{n\geq 0}h(n)\frac{x^n}{n!}\right)^t.
\end{equation*}
Informally, we consider $h(n)$ to be the number of ``structures'' that can be placed on an $n$-set where each structure can be uniquely broken up into a disjoint union of ``connected sub-structures''. 
Here $c(n)$ denotes the number of connected structures on an $n$-set and $c_j(n)$ denotes the number of structures on an $n$-set with exactly $j$ connected sub-structures. 
We will call such structures \emph{exponential structures}.

In fact, in most of the computations below, we will be dealing with generating functions of the form
\begin{equation}\label{genfunint}
    \left(\sum_{n\geq 0}h(n)\frac{x^n}{n!}\right)^{\frac{t + 1}{2}}.
\end{equation}
We can interpret such a generating function as follows. 
Suppose that there are two types of connected structures, say positive and negative connected structures. 
Also, suppose that the number of positive connected structures on $[n]$ is the same as the number of negative ones, i.e., $c(n)/2$. 
Then the coefficient of $t^j\frac{x^n}{n!}$ in the generating function given above is the number of structures on $[n]$ that have $j$ positive connected sub-structures.

Also, note that since the coefficients of the characteristic polynomial alternate in sign, the distribution of any appropriate statistic we define would be
\begin{equation*}
    \sum_{n \geq 0} \chi_{\A_n}(-t) \frac{(-x)^n}{n!}.
\end{equation*}

\subsection{Reflection arrangements}

Before defining statistics for the Catalan arrangements, we first do so for the reflection arrangements we studied in \Cref{refarrsec}. 
As we will see, the same statistic we define for sketches (regions of the type $C$ arrangement) works for the canonical sketches we have chosen for the other arrangements as well.
% We also note that the following results can be proved by directly looking at the coefficients of the characteristic polynomials as in \cite{thresh}.

\subsubsection{The type C arrangement}\label{typeCstat}

We have seen that the regions of the type $C$ arrangement in $\R^n$ correspond to sketches (\Cref{typeC}) of length $2n$. 
We use the second half of the sketch to represent the regions, and call them signed permutations on $[n]$.

A statistic on signed permutations whose distribution is given by the coefficients of the characteristic polynomial is given in \cite[Section 2]{thresh}. 
We define a similar statistic. 
First break the signed permutation into \emph{compartments} using right-to-left minima as follows: 
Ignoring the signs, draw a line before the permutation and then repeatedly draw a line immediately following the least number after the last line drawn. 
This is repeated until a line is drawn at the end of the permutation. 
It can be checked that compartments give signed permutations an exponential structure. 
A \emph{positive compartment} of a signed permutations is one where the last term is positive.

\begin{example}
The signed permutation given by
\begin{equation*}
    \overset{+} 3 \  \overset{+} 1 \  \overset{-} 6 \  \overset{-} 7 \  \overset{-} 5 \  \overset{+} 2 \  \overset{-} 4
\end{equation*}
is split into compartments as
\begin{equation*}
    \textcolor{blue}{|} \  \overset{+} 3 \  \overset{+} 1 \  \textcolor{blue}{|} \  \overset{-} 6 \  \overset{-} 7 \  \overset{-} 5 \  \overset{+} 2 \  \textcolor{blue}{|} \  \overset{-} 4 \  \textcolor{blue}{|}
\end{equation*}
and hence has $3$ compartments, $2$ of which are positive.
\end{example}

By the combinatorial interpretation of \eqref{genfunint}, the distribution of the statistic `number of positive compartments' on signed permutations is given by
\begin{equation*}
    \left(\frac{1}{1 - 2x}\right)^{\frac{t + 1}{2}}.
\end{equation*}
Note that for the type $C$ arrangement, in terms of \Cref{gesa}, we have
\begin{align*}
    F(x) &= \left(\frac{1}{1 + 2x}\right),\\
    G(x) &= 1.
\end{align*}
Hence, we get that the distribution of the statistic `number of positive compartments' on signed permutations is given by the coefficients of the characteristic polynomial.

% Setting $c(n, j)$ to be the number of signed permutations of $[n]$ with $j$ odd compartments, we have
% \begin{equation*}
%     c(n, j) = (2n - 1) \cdot c(n - 1, j) + c(n - 1, j - 1)
% \end{equation*}
% for all $n, j \geq 0$ with initial condition $c(0, 0) = 1$. 
% This can be shown by deleting the first element of the signed permutation. 
% This same recurrence is satisfied by the absolute values of the coefficients of the characteristic polynomial.

For the arrangements that follow, we have described canonical sketches and hence signed permutations that correspond to regions in \Cref{refarrsec}. 
For each arrangement, we will show that the distribution of the statistic `number of positive compartments' on these canonical signed permutations is given by the characteristic polynomial.

\subsubsection{The Boolean arrangement}

The signed permutations that correspond to the Boolean arrangement in $\R^n$ (\Cref{bool}) are those that have all compartments of size $1$, i.e., the underlying permutation is $1\ 2\ \cdots\ n$. 
Just as before, it can be seen that the distribution of the statistic `number of positive compartments' on such signed permutations is given by
\begin{equation*}
    (e^{2x})^{\frac{t + 1}{2}}.
\end{equation*}
This agrees with the generating function for the characteristic polynomial we get from \Cref{gesa} using $F(x) = e^{-2x}$ and $G(x) = 1$.

% The regions of the boolean arrangement in $\R^n$ correspond to the subsets of $[n]$ (\Cref{bool}). 
% Its characteristic polynomial is $(t - 1)^n$ \cite[Proposition 1.2]{sta_hyp}. 
% Hence the absolute value of the coefficient of $t^j$ is $\binom{n}{j}$. 
% This shows that the distribution of the statistic `number of elements' on the subsets of $[n]$ is given by the coefficients of the characteristic polynomial.

\subsubsection{The type D arrangement}

From \Cref{typeD}, we can see that the regions of the type $D$ arrangement in $\R^n$ correspond to signed permutations on $[n]$ where the first sign is positive. 
Given $i \in [n]$ and a signed permutation $\sigma$ of $[n] \setminus \{i\}$, the signed permutation of $[n]$ obtained by appending $\overset{-}{i}$ to the start of $\sigma$ has the same number of positive compartments as $\sigma$. 
This shows that the distribution of the statistic on signed permutations whose first term is positive is
\begin{equation*}
    (1 - x) \left(\frac{1}{1 - 2x}\right)^{\frac{t + 1}{2}}.
\end{equation*}
This agrees with the generating function for the characteristic polynomial we get from \Cref{gesa} since we have
\begin{align*}
    F(x) &= \left(\frac{1 + x}{1 + 2x}\right),\\
    G(x) &= 1 + x.
\end{align*}
Note that the expression for $G(x)$ is due to the fact that the type $D$ arrangement in $\R^1$ is empty.
% The characteristic polynomial \cite[Section 5.1]{sta_hyp} is
% \begin{equation*}
%     (t-1)(t-3) \cdots (t-(2n-3)) \cdot (t - (n - 1)).
% \end{equation*}
% Comparing this with the characteristic polynomial of the type $C$ arrangement, we get that the absolute value of the coefficient of $t^j$ is
% \begin{equation*}
%     c(n - 1, j - 1) + (n - 1) \cdot c(n - 1, j).
% \end{equation*}
% The number of signed permutations with $j$ odd compartments where the first term is $\overset{-}{1}$ is $c(n - 1, j - 1)$. 
% The number of remaining signed permutations with $j$ odd compartments whose first term is negative is $(n - 1) \cdot c(n - 1, j)$. 
% This shows that the distribution of the statistic `number of odd compartments' on signed permutations where the first sign is negative is given by the coefficients of the characteristic polynomial.

\subsubsection{The braid arrangement}

From \Cref{braid}, we get that the regions of the brain arrangement in $\R^n$ corresponds to the signed permutations on $[n]$ where all terms are positive. 
Hence, the number of positive compartments is just the number of compartments in the underlying permutation. 
Since compartments give permutations an exponential structure, the distribution of this statistic is
\begin{equation*}
    \left(\frac{1}{1 - x}\right)^t.
\end{equation*}
This agrees with the generating function for the characteristic polynomial we get from \Cref{gesa} since $F(x) = \frac{1}{1 + x}$ and $G(x) = 1 + x$.
% The regions of the braid arrangement in $\R^n$ (\Cref{braid}) correspond to the $n!$ permutations of $[n]$. 
% As mentioned in \Cref{intro}, the distribution of the statistic `number of cycles' on the set of permutations is given by the coefficients of the characteristic polynomial.

We summarize the results of this section as follows. 
For any reflection arrangement $\A$, we use $\A$-signed permutation to mean those described above to represent the regions of $\A$.

\begin{theorem}
    For any reflection arrangement $\A$, the absolute value of the coefficient of $t^j$ in $\chi_\A(t)$ is the number of $\A$-signed permutations that have $j$ positive compartments.
\end{theorem}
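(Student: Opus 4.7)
The plan is to prove the theorem case by case, handling each of the four reflection arrangements (type $C$, Boolean, type $D$, braid) separately but using a single unifying framework: \Cref{gesa} produces the exponential generating function of $\chi_{\A_n}(t)$ from the $F(x)$ and $G(x)$ associated to the sequence, while the combinatorial interpretation of \eqref{genfunint} produces the exponential generating function of the distribution of ``number of positive compartments'' on the relevant canonical sketches. Since the coefficients of $\chi_{\A_n}(t)$ alternate in sign (see \eqref{charform}), the claim is equivalent to the equality
\begin{equation*}
    \sum_{n \geq 0} \chi_{\A_n}(-t) \frac{(-x)^n}{n!} \;=\; \sum_{n \geq 0}\Biggl(\sum_{\sigma}\, t^{\,\operatorname{posc}(\sigma)}\Biggr)\frac{x^n}{n!},
\end{equation*}
where $\sigma$ runs over $\A$-signed permutations of $[n]$ and $\operatorname{posc}$ counts positive compartments.

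First I would collect the four generating-function identifications that were already carried out in the subsubsections. For the type $C$ arrangement we have $F(x)=(1+2x)^{-1}$ and $G(x)=1$, giving $\sum \chi_{\A_n}(t)x^n/n!=(1+2x)^{(t-1)/2}$; for the Boolean arrangement $F(x)=e^{-2x}$, $G(x)=1$, giving $e^{(1-t)x}$; for the type $D$ arrangement $F(x)=(1+x)/(1+2x)$ and $G(x)=1+x$, giving $(1+x)(1+2x)^{(t-1)/2}$; and for the braid arrangement $F(x)=(1+x)^{-1}$, $G(x)=1+x$, giving $(1+x)^t$. Substituting $t\mapsto -t$ and $x\mapsto -x$ in each expression yields respectively $(1-2x)^{-(t+1)/2}$, $e^{(t+1)x}$, $(1-x)(1-2x)^{-(t+1)/2}$, and $(1-x)^{-t}$.

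Next I would show that each of these transformed generating functions is exactly the distribution generating function for positive compartments on the corresponding canonical $\A$-signed permutations. The key tool is the interpretation of \eqref{genfunint}: if $\A$-signed permutations admit an exponential structure whose connected sub-structures are compartments, and if on each compartment positive and negative ``colorings'' are equinumerous, then the distribution of positive compartments is governed by $(\sum h(n)x^n/n!)^{(t+1)/2}$. For type $C$, compartments of length $n$ have $2\cdot n!/n = 2(n-1)!$ positive ones and the same number of negative ones (ignoring nothing), so $h(n)=2\cdot n!$, $\sum h(n)x^n/n!=(1-2x)^{-1}$, matching type $C$. For Boolean, compartments are forced to have size $1$, so $h(n)=\mathbf{1}[n\le 1]\cdot 2$ in the appropriate sense, giving $e^{2x}$. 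For type $D$, removing the leading negative letter (which does not affect $\operatorname{posc}$) identifies $\A$-signed permutations with signed permutations whose first term is positive, and the prefactor $(1-x)$ accounts for this restriction. For braid, all letters are positive so positive compartments equal all compartments, yielding $(1-x)^{-t}$.

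The main obstacle is the type $D$ case, because the canonical $\A$-signed permutations are constrained (the first term is positive) and one must argue carefully that appending a negative first letter to an arbitrary shorter signed permutation gives a bijection preserving $\operatorname{posc}$, in order to justify the factor $(1-x)$ cleanly; this matches the anomaly that the type $D$ arrangement in $\R^1$ is empty, which is exactly what forces the numerator $1+x$ in $G(x)$. Once this bookkeeping is done, the four exponential-generating-function identities line up termwise with the distributions, and extracting the coefficient of $t^j x^n/n!$ gives the claim.
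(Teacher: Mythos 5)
Your proposal follows the paper's argument essentially verbatim: the same $F(x)$, $G(x)$ pairs fed into \Cref{gesa}, the same sign substitutions $t\mapsto -t$, $x\mapsto -x$, the same use of the $(\sum h(n)x^n/n!)^{(t+1)/2}$ interpretation with compartments as connected sub-structures, and the same $(1-x)$-prefactor bijection for type $D$. One small slip: in the type $C$ and Boolean paragraphs you conflate $h(n)$ (all $\A$-signed permutations, e.g.\ $2^n n!$ for type $C$ and $2^n$ for Boolean) with $c(n)$ (connected ones), and the intermediate counts such as ``$h(n)=2\cdot n!$'' are off by factors of $2^{n-1}$, though the resulting EGFs $(1-2x)^{-1}$ and $e^{2x}$ that you actually use are correct.
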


\subsection{Catalan deformations}

We start with defining a statistic for the extended type $C$ Catalan arrangements. 
Using \Cref{gesa}, we then show that the generating function for the statistic and the characteristic polynomials match.

Fix $m \geq 1$. 
We define a statistic on labeled symmetric non-nesting partitions and show that its distribution is given by the characteristic polynomial. 
To do this, we first recall some definitions and results about the type $A$ extended Catalan arrangement.

\begin{definition}
An \emph{$m$-non-nesting partition of size $n$} is a partition of $[(m + 1)n]$ such that the following hold:
\begin{enumerate}
    \item Each block is of size $(m + 1)$.
    
    \item If $a, b$ are in the same block $B$ and $[a, b] \cap B = \{a, b\}$, then for any $c, d$ such that $a < c < d < b$, $c$ and $d$ are not in the same block.
\end{enumerate}
\end{definition}

Just as before, such partitions can be represented using arc diagrams.

\begin{example}\label{nonnestex}
The arc diagram corresponding to the $2$-non-nesting partition of size $3$
\begin{equation*}
    \{1, 2, 4\}, \{3, 5, 6\}, \{7, 8, 9\}
\end{equation*}
is given in \Cref{nonnestfig}.
\end{example}

\begin{figure}[H]
    \centering
    \begin{tikzpicture}
        \node [circle,inner sep=2pt,fill=black] (-6) at (-6+0.5,0) {};
        \node [circle,inner sep=2pt,fill=black] (-5) at (-5+0.5,0) {};
        \node [circle,inner sep=2pt,fill=black] (-4) at (-4+0.5,0) {};
        \node [circle,inner sep=2pt,fill=black] (-3) at (-3+0.5,0) {};
        \node [circle,inner sep=2pt,fill=black] (-2) at (-2+0.5,0) {};
        \node [circle,inner sep=2pt,fill=black] (-1) at (-1+0.5,0) {};
        \node [circle,inner sep=2pt,fill=black] (1) at (0+0.5,0) {};
        \node [circle,inner sep=2pt,fill=black] (2) at (1+0.5,0) {};
        \node [circle,inner sep=2pt,fill=black] (3) at (2+0.45,0) {};
        
        \draw (-6.north)..controls +(up:7mm) and +(up:7mm)..(-5.north);
        \draw (-5.north)..controls +(up:13mm) and +(up:13mm)..(-3.north);
        \draw (-4.north)..controls +(up:13mm) and +(up:13mm)..(-2.north);
        \draw (-2.north)..controls +(up:7mm) and +(up:7mm)..(-1.north);
        
        \draw (1.north)..controls +(up:7mm) and +(up:7mm)..(2.north);
        \draw (2.north)..controls +(up:7mm) and +(up:7mm)..(3.north);
    \end{tikzpicture}
    \caption{Arc diagram corresponding to the $2$-non-nesting partition in \Cref{nonnestex}.}
    \label{nonnestfig}
\end{figure}
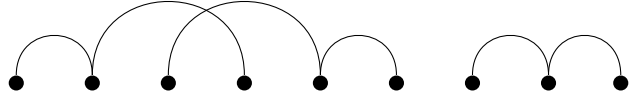

It is known (for example, see \cite[Theorem 2.2]{ath_non}) that the number of $m$-non-nesting partitions of size $n$ is
\begin{equation*}
    \frac{1}{mn + 1} \binom{(m + 1)n}{n}.
\end{equation*}
These numbers are called the Fuss-Catalan numbers or generalized Catalan numbers. 
Setting $m = 1$ gives us the usual Catalan numbers. 
Labeling the $n$ blocks distinctly using $[n]$ gives us labeled $m$-non-nesting partitions. 
These objects correspond to the regions of the type $A$ $m$-Catalan arrangement in $\R^n$ whose hyperplanes are
\begin{equation*}
    x_i - x_j = 0, \pm 1, \pm 2, \ldots, \pm m
\end{equation*}
for all $1 \leq i < j \leq n$ (for example, see \cite[Section 8.1]{ber}).

We now define a statistic on labeled non-nesting partitions similar to the one defined in \cite[Section 4]{branch}. 
The statistic defined in \cite{branch} is for labeled $m$-Dyck paths but these objects are in bijection with labeled $m$-non-nesting partitions.

A labeled non-nesting partition can be broken up into interlinked pieces, say $P_1, P_2, \ldots, P_k$. 
We group these pieces into \emph{compartments} as follows. 
If the label $1$ is in the $r^{th}$ interlinked piece, then the interlinked pieces $P_1, P_2, \ldots, P_r$ form the first compartment. 
Let $j$ be the smallest number in $[n] \setminus A$ where $A$ is the set of labels in first compartment. 
If $j$ is in the $s^{th}$ interlinked piece then interlinked pieces $P_{r + 1}, P_{r + 2}, \ldots, P_s$ form the second compartment. 
Continuing this way, we break up a labeled non-nesting partition into compartments.

\begin{example}
The labeled non-nesting partition in \Cref{compex} has $3$ interlinked pieces. 
The first compartment consists of just the first interlinked piece since it contains the label $1$. 
The smallest label in the rest of the diagram is $3$ which is in the last interlinked piece. 
Hence, this labeled non-nesting partition has $2$ compartments.
\end{example}

\begin{figure}[H]
    \centering
    \begin{tikzpicture}
        \node (-4) at (-4+0.5-2,0) {$1$};
        \node (-3) at (-3+0.5-2,0) {$4$};
        \node (-2) at (-2+0.5-2,0) {$1$};
        \node (-1) at (-1+0.5-2,0) {$2$};
        \node (1) at (1-0.5-2,0) {$4$};
        \node (2) at (2-0.5-2,0) {$5$};
        \node (3) at (3-0.5-2,0) {$2$};
        \node (4) at (4-0.5-2,0) {$5$};
        
        \node (-6) at (3-0.5,0) {$6$};
        \node (-5) at (4-0.5,0) {$6$};
        
        \node (5) at (5-0.5,0) {$3$};
        \node (6) at (6-0.5,0) {$3$};
        
        \draw (-6.north)..controls +(up:7mm) and +(up:7mm)..(-5.north);
        \draw (6.north)..controls +(up:7mm) and +(up:7mm)..(5.north);
        \draw (-4.north)..controls +(up:13mm) and +(up:13mm)..(-2.north);
        \draw (4.north)..controls +(up:13mm) and +(up:13mm)..(2.north);
        \draw (-3.north)..controls +(up:17mm) and +(up:17mm)..(1.north);
        \draw (3.north)..controls +(up:17mm) and +(up:17mm)..(-1.north);
    \end{tikzpicture}
    \caption{A labeled non-nesting partition with $3$ interlinked pieces and $2$ compartments.}
    \label{compex}
\end{figure}
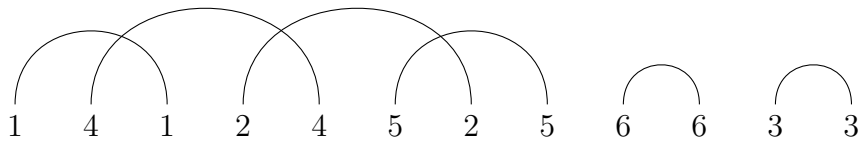

A non-nesting partition labeled with distinct integers (not necessarily of the form $[n]$) can be broken up into compartments in the same way. 
Here the first compartment consists of the interlinked pieces up to the one containing the smallest label.

It can be checked that compartments give labeled non-nesting partitions an exponential structure. 
This is because the order in which they appear can be determined by their labels. 
A labeled non-nesting partition is said to be \emph{connected} if it has only one compartment.

We now define a similar statistic for labeled symmetric non-nesting partitions. 
To a symmetric non-nesting partition we can associate a pair consisting of
\begin{enumerate}
    \item an interlinked symmetric non-nesting partition, which we call the \emph{bounded part} and
    \item a non-nesting partition, which we call the \emph{unbounded part}.
\end{enumerate}
This is easy to do using arc diagrams, as illustrated in the following example. 
The terminology becomes clear when one considers the boundedness of the coordinates in the region corresponding to a labeled symmetric non-nesting partition.

\begin{example}
To the symmetric $2$-non-nesting partition in \Cref{symnonfull} we associate
\begin{enumerate}
    \item the interlinked symmetric $2$-non-nesting partition marked $A$ and
    \item the $2$-non-nesting partition marked $B$.
\end{enumerate}
Here $A$ is the bounded part and $B$ is the unbounded part. 
We can obtain the original arc diagram back from $A$ and $B$ by placing a copy of $B$ on either side of $A$.
\end{example}

\begin{figure}[H]
    \centering
    \begin{tikzpicture}
        \node [circle,inner sep=2pt,fill=black] (-9) at (-9+0.5,0) {};
        \node [circle,inner sep=2pt,fill=black] (-8) at (-8+0.5,0) {};
        \node [circle,inner sep=2pt,fill=black] (-7) at (-7+0.5,0) {};
        
        \node [circle,inner sep=2pt,fill=black] (-6) at (-6+0.5,0) {};
        \node [circle,inner sep=2pt,fill=black] (-5) at (-5+0.5,0) {};
        \node [circle,inner sep=2pt,fill=black] (-4) at (-4+0.5,0) {};
        
        \draw [blue] (-3,-0.35)--(-3,0.35);
        
        \node [circle,inner sep=2pt,fill=black] (-3) at (-3+0.5,0) {};
        \node [circle,inner sep=2pt,fill=black] (-2) at (-2+0.5,0) {};
        \node [circle,inner sep=2pt,fill=black] (-1) at (-1+0.5,0) {};
        
        \draw [decorate,decoration={brace,amplitude=10pt,mirror},xshift=-4pt,yshift=0pt] (-5.5,-0.4)--(-0.25,-0.4) node [xshift=-2.6cm, yshift=-0.7cm]{$A$};
        
        \node [circle,inner sep=2pt,fill=black] (1) at (0+0.5,0) {};
        \node [circle,inner sep=2pt,fill=black] (2) at (1+0.5,0) {};
        \node [circle,inner sep=2pt,fill=black] (3) at (2+0.45,0) {};
        
        \draw [decorate,decoration={brace,amplitude=10pt,mirror},xshift=-4pt,yshift=0pt] (-8.5+9,-0.4)--(-6.25+9,-0.4) node [xshift=-1.15cm, yshift=-0.7cm]{$B$};
        
        \draw (-9.north)..controls +(up:7mm) and +(up:7mm)..(-8.north);
        \draw (-8.north)..controls +(up:7mm) and +(up:7mm)..(-7.north);
        
        \draw (-6.north)..controls +(up:7mm) and +(up:7mm)..(-5.north);
        \draw (-5.north)..controls +(up:13mm) and +(up:13mm)..(-3.north);
        \draw (-4.north)..controls +(up:13mm) and +(up:13mm)..(-2.north);
        \draw (-2.north)..controls +(up:7mm) and +(up:7mm)..(-1.north);
        
        \draw (1.north)..controls +(up:7mm) and +(up:7mm)..(2.north);
        \draw (2.north)..controls +(up:7mm) and +(up:7mm)..(3.north);
    \end{tikzpicture}
    \caption{Break up of a symmetric $2$-non-nesting partition.}
    \label{symnonfull}
\end{figure}
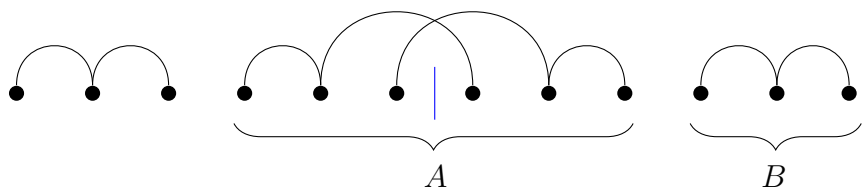

This is a bijection between symmetric non-nesting partitions and such pairs. 
Given a labeled symmetric non-nesting partition, we define the statistic using just the unbounded part. 
Ignoring the signs, we break the unbounded part into compartments just as we did for non-nesting partitions. 
A \emph{positive compartment} is one whose last element has a positive label.

\begin{example}
Suppose the arc diagram in \Cref{oddcompex} is the unbounded part of some symmetric non-nesting partition. 
Notice that ignoring the signs, this arc diagrams breaks up into compartments just as \Cref{compex}. 
But only the first compartment is positive since its last element has label $6$ which is positive.
\end{example}

\begin{figure}[H]
    \centering
    \begin{tikzpicture}
        \node (-4) at (-4+0.5-2,0) {$-1$};
        \node (-3) at (-3+0.5-2,0) {$4$};
        \node (-2) at (-2+0.5-2,0) {$-1$};
        \node (-1) at (-1+0.5-2,0) {$-2$};
        \node (1) at (1-0.5-2,0) {$4$};
        \node (2) at (2-0.5-2,0) {$6$};
        \node (3) at (3-0.5-2,0) {$-2$};
        \node (4) at (4-0.5-2,0) {$6$};
        
        \node (-6) at (3-0.5,0) {$8$};
        \node (-5) at (4-0.5,0) {$8$};
        
        \node (5) at (5-0.5,0) {$-3$};
        \node (6) at (6-0.5,0) {$-3$};
        
        \draw (-6.north)..controls +(up:7mm) and +(up:7mm)..(-5.north);
        \draw (6.north)..controls +(up:7mm) and +(up:7mm)..(5.north);
        \draw (-4.north)..controls +(up:13mm) and +(up:13mm)..(-2.north);
        \draw (4.north)..controls +(up:13mm) and +(up:13mm)..(2.north);
        \draw (-3.north)..controls +(up:17mm) and +(up:17mm)..(1.north);
        \draw (3.north)..controls +(up:17mm) and +(up:17mm)..(-1.north);
    \end{tikzpicture}
    \caption{The unbounded part of a symmetric non-nesting partition that has $1$ positive compartment.}
    \label{oddcompex}
\end{figure}
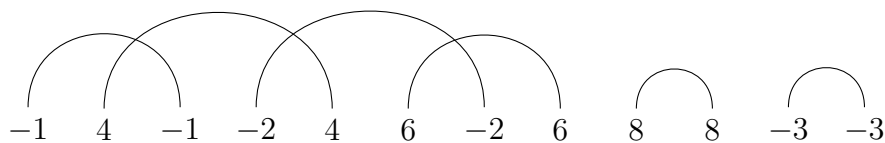

We claim that the statistic `number of positive compartments' meets our requirements. 
To prove that the distribution of this statistic is given by the characteristic polynomial, we apply \Cref{gesa} to the sequence of arrangements $\{\C_n^{(m)}\}$. 
Using the bijection between labeled symmetric $m$-non-nesting partitions and regions of $\C_n^{(m)}$, we note that those arc diagrams that are interlinked are the ones that correspond to bounded regions. 
Hence, using the notations form \Cref{gesa}, and \cite[Proposition 5.1.1]{sta_ec2}, we have
\begin{equation}\label{regbreakup}
    F(-x) = G(-x) \cdot \left( \sum_{n \geq 0} \frac{2^nn!}{mn + 1} \binom{(m + 1)n}{n} \frac{x^n}{n!} \right).
\end{equation}
Note that $\operatorname{rank}(\C_n^{(m)}) = n$. 
This gives us
\begin{equation*}
    \sum_{n \geq 0} \chi_{\A_n}(-t) \frac{(-x)^n}{n!} = G(-x) \cdot \left( \sum_{n \geq 0} \frac{2^nn!}{mn + 1} \binom{(m + 1)n}{n} \frac{x^n}{n!} \right)^{\frac{t + 1}{2}}.
\end{equation*}
Using the combinatorial interpretation of \eqref{genfunint}, we see that the right hand side of the above equation is the generating function for the distribution of the statistic.

We also obtain corresponding statistics on symmetric sketches using the bijection in \Cref{extc}. 
This gives us the following result.

\begin{theorem}
The absolute value of the coefficient of $t^j$ in $\chi_{\C_n^{(m)}}(t)$ is the number of symmetric $m$-sketches of size $n$ that have $j$ positive compartments.
\end{theorem}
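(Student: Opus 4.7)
The plan is to apply \Cref{gesa} to the GESA $(\C_n^{(m)})_{n \geq 1}$, cancel one factor using a bijective decomposition of symmetric non-nesting partitions, and read off the result via the exponential-structure interpretation of \eqref{genfunint}. I would first check that $(\C_n^{(m)})$ is indeed a GESA---each hyperplane has the required parallel form, and restriction to any coordinate subset recovers the analogous arrangement on fewer variables---and record that $\operatorname{rank}(\C_n^{(m)}) = n$, so that with the notation of \Cref{gesa} one has $F(-x) = \sum_n r(\C_n^{(m)})\tfrac{x^n}{n!}$ and $G(-x) = \sum_n b(\C_n^{(m)})\tfrac{x^n}{n!}$.

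The key combinatorial input is \eqref{regbreakup}. Using the bijection of \Cref{extc} between symmetric $m$-sketches and labeled symmetric $m$-non-nesting partitions, every such partition on $[n]$ decomposes uniquely into a central interlinked symmetric (``bounded'') piece on some subset $S \subseteq [n]$ of labels together with a non-nesting (``unbounded'') piece on $[n] \setminus S$, whose mirror image is placed on the opposite side. The exponential formula of \cite[Proposition 5.1.1]{sta_ec2} translates this decomposition into $F(-x) = G(-x) \cdot U(x)$, where
\[
U(x) = \sum_{n \geq 0}\frac{2^n n!}{mn+1}\binom{(m+1)n}{n}\,\frac{x^n}{n!}
\]
is the EGF for labeled $m$-non-nesting partitions on $[n]$ carrying an arbitrary sign on every label (the Fuss--Catalan count multiplied by $n!$ for labeling and $2^n$ for signs).

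Substituting $(t,x) \mapsto (-t,-x)$ in \Cref{gesa} and using \eqref{regbreakup} to cancel one factor of $G(-x)$ yields
\[
\sum_{n \geq 0}\chi_{\C_n^{(m)}}(-t)\,\frac{(-x)^n}{n!} = G(-x)\cdot U(x)^{(t+1)/2},
\]
and a short sign-chase using \eqref{charform} shows the left-hand side equals $\sum_{n,j} c_j(n)\,t^j\tfrac{x^n}{n!}$ with $c_j(n) = |[t^j]\chi_{\C_n^{(m)}}(t)|$. It therefore suffices to interpret the right-hand side combinatorially: compartments endow each signed labeled unbounded part with an exponential structure, and within any one-compartment (``connected'') signed labeled unbounded piece, flipping the sign of the last element is a bijection between those with positive and those with negative final sign. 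Hence the half-and-half hypothesis of \eqref{genfunint} holds and $U(x)^{(t+1)/2}$ is the EGF for labeled signed non-nesting parts with $t^j$ marking $j$ positive compartments; multiplying by $G(-x)$ then glues in an arbitrary bounded piece on the complementary labels without affecting the compartment count. Transporting back through the bijection of \Cref{extc} and comparing coefficients delivers the theorem.

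The main obstacle is justifying the decomposition in \eqref{regbreakup}: one must check that a labeled symmetric non-nesting partition is determined cleanly by its interlinked central piece and its right-side unbounded piece on complementary labels, and that the signs behave consistently across the mirror, so that the exponential formula applies exactly in the product form claimed. The equidistribution of final-element sign within a compartment and the remaining algebraic manipulation of generating functions are then routine.
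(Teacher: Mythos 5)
Your proposal is correct and follows essentially the same route as the paper: apply \Cref{gesa} to the GESA $\{\C_n^{(m)}\}$, use the bounded/unbounded decomposition of labeled symmetric $m$-non-nesting partitions (together with the exponential formula) to obtain \eqref{regbreakup} and cancel one factor of $G(-x)$, then read off the statistic via the interpretation of \eqref{genfunint} and transport through the sketch/partition bijection of \Cref{extc}. You are somewhat more explicit than the paper at two points---the algebraic cancellation after the substitution $(t,x)\mapsto(-t,-x)$, and the sign-flipping bijection that verifies the ``half positive, half negative'' hypothesis of \eqref{genfunint}---but these are details the paper leaves implicit rather than differences in method.
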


For the arrangements $\D_n$, $\Po_n$, $\B_n$, and $\BC_n$ as well, the analogue of \eqref{regbreakup} holds. 
That is, for each of these arrangements, using the notation of \Cref{gesa}, we have
\begin{equation*}
    F(-x) = G(-x) \cdot \left( \sum_{n \geq 0} \frac{2^nn!}{n + 1} \binom{2n}{n} \frac{x^n}{n!} \right).
\end{equation*}
This can be proved using the definitions of type $D$, pointed, type $B$, and type $BC$ sketches and the description of which sketches correspond to bounded regions.

There is a slight difference in the proof for the sequence of arrangements $\{\D_n\}$. 
The arrangement $\D_1$ is empty and hence
\begin{equation*}
    G(-x) = 1 - x + \sum_{n \geq 2} b(\D_n) \frac{x^n}{n!}.
\end{equation*}
However, from the definition of type $D$ sketches, we see that we must not allow those symmetric non-nesting partitions where the bounded part is empty and the first interlinked piece of the unbounded part is of size $1$ with negative label. 
Hence, we still get the required expression for $F(-x)$.

Just as we did for the extended type $C$ Catalan arrangements, we define positive compartments for the arc diagrams corresponding to the regions of these arrangements, which gives corresponding statistics on the sketches.

\begin{example}
The arc diagram in \Cref{pointedcompex} corresponds to a pointed sketch with $2$ positive compartments.
\end{example}

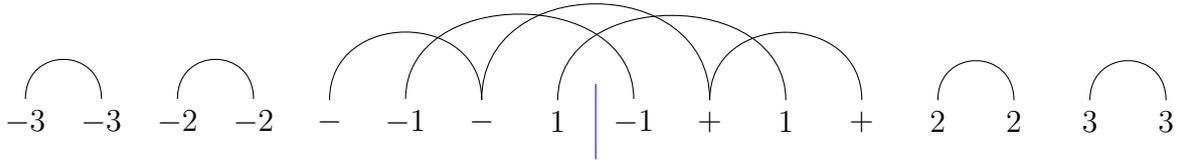
\begin{figure}[H]
    \centering
    \begin{tikzpicture}
        \node (-10) at (-8+0.5,0) {$-3$};
        \node (-9) at (-7+0.5,0) {$-3$};
        \node (-8) at (-6+0.5,0) {$-2$};
        \node (-7) at (-5+0.5,0) {$-2$};
        
        \node (-6) at (-4+0.5,0) {$-$};
        \node (-4) at (-3+0.5,0) {$-1$};
        \node (-3) at (-2+0.5,0) {$-$};
        \node (-2) at (-1+0.5,0) {$1$};
        \draw [blue] (0,-0.5)--(0,0.5);
        \node (2) at (1-0.5,0) {$-1$};
        \node (3) at (2-0.5,0) {$+$};
        \node (4) at (3-0.5,0) {$1$};
        \node (6) at (4-0.5,0) {$+$};
        
        \node (7) at (5-0.5,0) {$2$};
        \node (8) at (6-0.5,0) {$2$};
        \node (9) at (7-0.5,0) {$3$};
        \node (10) at (8-0.5,0) {$3$};
        
        \draw (-10.north)..controls +(up:7mm) and +(up:7mm)..(-9.north);
        \draw (-8.north)..controls +(up:7mm) and +(up:7mm)..(-7.north);
        \draw (8.north)..controls +(up:7mm) and +(up:7mm)..(7.north);
        \draw (10.north)..controls +(up:7mm) and +(up:7mm)..(9.north);
        
        \draw (-6.north)..controls +(up:12mm) and +(up:12mm)..(-3.north);
        \draw (3.north)..controls +(up:17mm) and +(up:17mm)..(-3.north);
        \draw (6.north)..controls +(up:12mm) and +(up:12mm)..(3.north);
        \draw (-4.north)..controls +(up:15mm) and +(up:15mm)..(2.north);
        \draw (-2.north)..controls +(up:15mm) and +(up:15mm)..(4.north);
    \end{tikzpicture}
    \caption{Arc diagram corresponding to a pointed sketch with $2$ positive compartments.}
    \label{pointedcompex}
\end{figure}

The following result can be proved just as before.

\begin{theorem}
The absolute value of the coefficient of $t^j$ in $\chi_{\A}(t)$ for $\A = \D_n$ (respectively $\Po_n$, $\B_n, \BC_n$) is the number of type $D$ (respectively pointed, type $B$, type $BC$) sketches of size $n$ that have $j$ positive compartments.
\end{theorem}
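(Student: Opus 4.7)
The plan is to mirror the argument given for $\C_n^{(m)}$, applying \Cref{gesa} to each of the four sequences $\{\D_n\}$, $\{\Po_n\}$, $\{\B_n\}$, $\{\BC_n\}$ and then unpacking the resulting generating function into the exponential-structure interpretation of positive compartments. First I would record, for each sequence $\A = \A_n$, the functions
\begin{equation*}
F(x) = \sum_{n\geq 0}(-1)^n r(\A_n)\frac{x^n}{n!}, \qquad G(x) = \sum_{n\geq 0}(-1)^{\operatorname{rank}(\A_n)} b(\A_n)\frac{x^n}{n!}.
\end{equation*}
Using the explicit region and bounded-region counts already established in \Cref{nooftypeDregions}, \Cref{typDbdd}, \Cref{pointedsymcount}, \Cref{bhabdd}, and the analogous type $B$ and $BC$ theorems, I would then verify the analogue of \eqref{regbreakup}, namely
\begin{equation*}
F(-x) \;=\; G(-x)\cdot H(x),\qquad H(x) = \sum_{n\geq 0}\frac{2^n n!}{n+1}\binom{2n}{n}\frac{x^n}{n!},
\end{equation*}
which the excerpt already asserts. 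The combinatorial content of this identity is exactly the bijection between a sketch (for the given type) and a pair consisting of its central interlinked (bounded) part and its non-interlinked (unbounded) outer piece; the inner part contributes to $G(-x)$ and the outer piece—essentially a labeled non-nesting partition—is enumerated by $2^n n!$ times the Fuss–Catalan (here just Catalan) number $\tfrac{1}{n+1}\binom{2n}{n}$.

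Next, with this factorization in hand, I would plug into \Cref{gesa}, writing $\sum \chi_{\A_n}(t) x^n/n! = G(x)^{(t+1)/2} F(x)^{(1-t)/2}$, then substitute $x \mapsto -x$ and $t \mapsto -t$ to obtain
\begin{equation*}
\sum_{n\geq 0}\chi_{\A_n}(-t)\frac{(-x)^n}{n!} \;=\; G(-x)\cdot H(x)^{(t+1)/2}.
\end{equation*}
The right-hand side is now in precisely the form \eqref{genfunint} times the ``bounded-part factor'' $G(-x)$. Since $H(x)$ is the exponential generating function for labeled non-nesting partitions decomposed into compartments, and each connected compartment splits evenly into positive and negative variants under the sign of its rightmost label, the interpretation of \eqref{genfunint} identifies the coefficient of $t^j \tfrac{x^n}{n!}$ on the right with the number of pairs (bounded central piece, labeled symmetric outer piece) whose outer piece has exactly $j$ positive compartments. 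Via the bijections of \Cref{dcatsec} and the subsequent subsections, such pairs are exactly the type $D$, pointed, type $B$, or type $BC$ sketches of size $n$ with $j$ positive compartments, which, combined with Zaslavsky's sign convention, gives the theorem.

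The main obstacle I anticipate is the $\D_n$ case, because $\D_1$ is empty and so $G(-x)$ picks up an extra linear term compared with the other types. I would treat this carefully as the excerpt indicates: the linear term $-x$ in $G(-x)$ must be compensated by excluding precisely those labeled symmetric non-nesting partitions whose bounded part is empty and whose unbounded part has first interlinked piece of size one with negative label—this is exactly the constraint imposed by the definition of a type $D$ sketch (negative-label requirement on the $n$th $\alpha$-letter when followed by an $\alpha$ or the $n$th $\beta$). Verifying that this exclusion matches the needed adjustment to $F(-x)$ is the only place where a direct combinatorial check is required; the remaining three cases follow routinely by copying the argument for $\C_n^{(m)}$ verbatim with $m=1$ and with $G(-x)$ reading off the bounded-region count of the relevant type.
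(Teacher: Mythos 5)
Your proposal is correct and follows essentially the same route as the paper: apply \Cref{gesa}, verify the factorization $F(-x) = G(-x) \cdot H(x)$ via the bounded/unbounded decomposition of sketches, then read off the positive-compartment statistic from the exponential-structure interpretation of $H(x)^{(t+1)/2}$, with the $\D_n$ case handled by noting that the extra $-x$ in $G(-x)$ exactly compensates for excluding the configurations ruled out by the type $D$ sketch conditions. The paper does not spell out a formal proof (it states that the result ``can be proved just as before'' and records the factorization and the $\D_n$ adjustment), and your writeup fills in precisely those steps the paper elides, so there is no substantive difference in approach.
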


\section{Deformations of the threshold arrangement}\label{threshsec}

The threshold arrangement in $\R^n$ consists of the hyperplanes $x_i + x_j = 0$ for $1 \leq i < j \leq n$. 
These arrangements are of interest because their regions correspond to certain labeled graphs called \emph{threshold graphs} which have been extensively studied (see \cite{thresh_book}). 
In this section, we study this arrangement and some of its deformations using similar techniques as in previous sections.

\subsection{Sketches and moves}\label{threshfubsm}

We use the sketches and moves idea to study the regions of the threshold arrangement by considering it as a sub-arrangement of the type $C$ arrangement (\Cref{typeC}). 
Before doing that, we first study the arrangement obtained by adding the coordinate hyperplanes to the threshold arrangement.

\subsubsection{Fubini arrangement}\label{fubini}

We define the Fubini arrangement in $\R^n$ to be the one with hyperplanes
\begin{align*}
    2x_i&=0\\
    x_i+x_j&=0
\end{align*}
for all $1 \leq i < j \leq n$.
The hyperplanes missing from the type $C$ arrangement are
\begin{equation*}
    x_i - x_j = 0
\end{equation*}
for all $1 \leq i < j \leq n$.
Hence a Fubini move, which we call an $F$ move, is swapping adjacent $\overset{+}{i}$ and $\overset{+}{j}$ as well as $\overset{-}{j}$ and $\overset{-}{i}$ for distinct $i,j \in [n]$.

\begin{example}\label{bellex}
We can use a series of $F$ moves on a sketch as follows:

\begin{equation*}
    \overset{-}{3}\ \overset{-}{6}\ \overset{-}{2}\ \overset{+}{1}\ \overset{+}{4}\ \overset{-}{5}\ \textcolor{blue}{|} \ \overset{+}{5}\ \overset{-}{4}\ \overset{-}{1}\ \overset{+}{2}\ \overset{+}{6}\ \overset{+}{3}\ \longrightarrow \ \overset{-}{6}\ \overset{-}{3}\ \overset{-}{2}\ \overset{+}{1}\ \overset{+}{4}\ \overset{-}{5}\ \textcolor{blue}{|} \ \overset{+}{5}\ \overset{-}{4}\ \overset{-}{1}\ \overset{+}{2}\ \overset{+}{3}\ \overset{+}{6}\ \longrightarrow \ \overset{-}{6}\ \overset{-}{3}\ \overset{-}{2}\ \overset{+}{4}\ \overset{+}{1}\ \overset{-}{5}\ \textcolor{blue}{|} \ \overset{+}{5}\ \overset{-}{1}\ \overset{-}{4}\ \overset{+}{2}\ \overset{+}{3}\ \overset{+}{6}
\end{equation*}
\end{example}

We define a \textit{block} to be the set of absolute values in a maximal string of contiguous terms in the second half of a sketch that have the same sign. 
The blocks of the initial sketch in \Cref{bellex} are $\{5\},\{1,4\},\{2,3,6\}$ (these blocks appear in this order with the first one being positive). 
It can be checked that $F$ moves do not change the sequence of signs (above the numbers) and that they can only be used to reorder the elements in a block. 
Hence, each equivalence class has a unique sketch where the numbers in each block appear in ascending order. 
The last sketch in \Cref{bellex} is the unique such sketch in its equivalence class.

The number of such sketches is equal to the number of ways of choosing an ordered partition of $[n]$ (which correspond to the blocks of the sketch in order) and then choosing a sign for the first block. 
Hence the number of regions of the Fubini arrangement is $2 \cdot a(n)$ where $a(n)$ is the $n^{th}$ Fubini number, which is the number of ordered partitions of $[n]$ listed as \href{https://oeis.org/A000670}{A000670} in the OEIS \cite{oeis}.

\subsubsection{Threshold arrangement}\label{threshsketch}

The threshold arrangement in $\mathbb{R}^n$ has the hyperplanes
\begin{equation*}
    x_i+x_j=0
\end{equation*}
for all $1 \leq i < j \leq n$.
The hyperplanes missing from the type $C$ arrangement are
\begin{align*}
    2x_i&=0\\
    x_i-x_j&=0
\end{align*}
for all $1 \leq i < j \leq n$.
Hence the threshold moves, which we call $T$ moves, are as follows:
\begin{enumerate}
    \item ($D$ move) Swapping adjacent $\overset{+}{i}$ and $\overset{-}{i}$ for any $i \in [n]$.
    \item ($F$ move) Swapping adjacent $\overset{+}{i}$ and $\overset{+}{j}$ as well as $\overset{-}{j}$ and $\overset{-}{i}$ for distinct $i, j \in [n]$.
\end{enumerate}

For any sketch, there is a $T$ equivalent sketch for which the first block has more than 1 element. 
This is because, if the sketch has first block of size 1, applying a $D$ move (swapping the $n^{th}$ and $(n+1)^{th}$ term), will result in a sketch where the first block has size greater than 1 (first step in \Cref{tex}).

\begin{example}\label{tex}
We can use a series of $T$ moves on a sketch as follows:
\begin{equation*}
    \overset{+}{5}\ \overset{-}{4}\ \overset{-}{1}\ \overset{+}{2}\ \overset{+}{6}\ \overset{-}{3}\ \textcolor{blue}{|} \ \overset{+}{3}\ \overset{-}{6}\ \overset{-}{2}\ \overset{+}{1}\ \overset{+}{4}\ \overset{-}{5} \xrightarrow{D\ move} \overset{+}{5}\ \overset{-}{4}\ \overset{-}{1}\ \overset{+}{2}\ \overset{+}{6}\ \overset{+}{3}\ \textcolor{blue}{|} \ \overset{-}{3}\ \overset{-}{6}\ \overset{-}{2}\ \overset{+}{1}\ \overset{+}{4}\ \overset{-}{5} \xrightarrow{F\ moves} \overset{+}{5}\ \overset{-}{4}\ \overset{-}{1}\ \overset{+}{6}\ \overset{+}{3}\ \overset{+}{2}\ \textcolor{blue}{|} \ \overset{-}{2}\ \overset{-}{3}\ \overset{-}{6}\ \overset{+}{1}\ \overset{+}{4}\ \overset{-}{5}
\end{equation*}
\end{example}

To obtain a canonical sketch for each threshold region, we will need a small lemma.

\begin{lemma}\label{threshlem}
    Two $T$ equivalent sketches that have their first block of size greater than 1 have the same blocks which appear in the same order with the same signs.
\end{lemma}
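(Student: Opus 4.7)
The plan is to analyze how each of the two types of $T$ move affects the block structure of the second half of a sketch, and then combine the analyses. I will first establish the preservation properties of $F$ moves, then the split/merge action of $D$ moves, and finally combine them via a parity-plus-pairing argument.

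For $F$ moves, the claim to be established is that they preserve the ordered sequence of blocks together with their signs and their underlying sets of labels. An $F$ move swaps two adjacent same-sign entries of the sketch, and in the second half such a swap must occur strictly inside a single block, since blocks are by definition maximal runs of a fixed sign. In particular $F$ moves cannot cross a block boundary, because adjacent blocks carry opposite signs. For $D$ moves, I will examine the two middle positions $n$ and $n+1$ that they swap. A $D$ move flips the sign at position $n+1$ but leaves positions $n+2$ onward unchanged; comparing signs shows that if the first block has size $\geq 2$ with sign $s$, the move detaches the label at position $n+1$ as a new singleton first block of sign $-s$ and promotes the remainder of the old first block to second block, while if the first block has size $1$ the move merges that singleton with the block immediately following it. Thus each $D$ move toggles the indicator ``the first block has size $1$'' and acts as a split/merge inverse on the block structure.

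With these two analyses in hand, the lemma follows quickly. Suppose we have a sequence of $T$ moves between two sketches whose first blocks each have size $>1$. The parity of the ``size $=1$'' indicator forces the number of $D$ moves in the sequence to be even, say at times $t_1 < t_2 < \cdots < t_{2p}$. Pair them as $(t_{2i-1}, t_{2i})$; only $F$ moves occur between the two $D$ moves of a pair, and by the preceding paragraph these preserve blocks. Consequently the singleton $\{a\}$ split off at time $t_{2i-1}$ is still the first block at time $t_{2i}$, with second block equal to $B_1 \setminus \{a\}$, where $B_1$ denotes the first block just before $t_{2i-1}$. The $D$ move at $t_{2i}$ therefore merges $\{a\}$ back with $B_1 \setminus \{a\}$, restoring $B_1$ and leaving $B_2, B_3, \ldots$ untouched. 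Induction over the pairs then yields the lemma. The main obstacle is this pairing step: one must be sure that no $F$ move between paired $D$'s can relocate a label between the singleton and its adjacent block, which holds precisely because $F$ moves are trapped inside single blocks and cannot cross opposite-sign boundaries.
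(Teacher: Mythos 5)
Your proof is correct and rests on the same two facts the paper uses: an $F$ move permutes entries only within a block, and a $D$ move either splits a singleton off the first block of the second half (when that block has size $>1$) or merges a singleton first block into the block after it. The organization differs, though. The paper simply observes that no $T$ move alters the sign at positions $n+2,\ldots,2n$, so once both endpoint sketches have first block of size $k>1$ the sign pattern of the entire second half — and hence the sequence of block boundaries and block signs — is forced to coincide; the set of labels occupying positions $n+1,\ldots,n+k$ is then conserved because the permanent sign change after position $n+k$ is a barrier that no move can carry a label across. Your route instead runs a parity-and-pairing analysis over the whole move sequence: the indicator ``first block has size $1$'' is toggled by each $D$ move, so the number of $D$ moves is even, and each consecutive pair of $D$'s, sandwiching only block-preserving $F$ moves, is a no-op on the block structure. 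Both are valid; the paper's invariance argument is shorter and avoids tracking the move sequence step by step, while your pairing argument makes the split-then-restore mechanism behind that invariance fully explicit.
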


\begin{proof}
    Looking at what the $T$ moves do to the sequence of signs (above the numbers), we can see that they at most swap the $n^{th}$ and $(n+1)^{th}$ sign ($D$ move). 
    Hence, if we require the first blocks to have size greater than 1, both the sketches have the same number of blocks and the number of elements in the corresponding blocks are the same. 
    An $F$ move can only reorder elements in the same block of a sketch. 
    A $D$ move changes the sign of the first element of the second half. 
    So if there are $k>1$ elements in the first block of a $T$ equivalent sketch, then the set of absolute values of the first $k$ elements of the second half remains the same in all $T$ equivalent sketches. 
    This gives us the required result.
\end{proof}

Using the above lemma, we can see that for any sketch there is a unique $T$ equivalent sketch where the size of the first block is greater than $1$ and the elements of each block are in ascending order. 
The last sketch in \Cref{tex} is the unique such sketch in its equivalence class. 
Similar to the count for Fubini regions, we get that the number of regions of the threshold arrangement is
\begin{equation*}
    2 \cdot (a(n) - n \cdot a(n-1))
\end{equation*}
where, as before, $a(n)$ is the $n^{th}$ Fubini number. 
The number of regions of the threshold arrangement is listed as \href{https://oeis.org/A005840}{A005840} in the OEIS \cite{oeis}.

\begin{remark}
The regions of the threshold arrangement in $\R^n$ are known to be in bijection with labeled threshold graphs on $n$ vertices (see \cite[Exercise 5.25]{sta_hyp}). 
Labeled threshold graphs on $n$ vertices are inductively constructed starting from the empty graph. 
Vertices labeled $1,\ldots,n$ are added in a specified order. 
At each step, the vertex added is either `dominant' or `recessive'. 
A dominant vertex is one that is adjacent to all vertices added before it and a recessive vertex is one that is isolated from all vertices added before it. 
It is not difficult to see that the canonical sketches described above are in bijection with threshold graphs.
\end{remark}

\subsection{Statistics}

The characteristic polynomial of the threshold arrangement and a statistic on its regions whose distribution is given by the characteristic polynomial has been studied in \cite{thresh}. 
This is done by directly looking at the coefficients of the characteristic polynomial. 
In fact, even the coefficients of the characteristic polynomial of the Fubini arrangement (\Cref{fubini}) have already been combinatorially interpreted in \cite[Section 4.1]{thresh}. 
This can be used to define an appropriate statistic on the regions of the Fubini arrangement. 
Here, just as in \Cref{statsec}, we use \Cref{gesa} to combinatorially interpret the generating functions of the characteristic polynomials for the Fubini and threshold arrangements. 
Just as before, we will show that the statistic `number of positive compartments' works for our purposes.

\subsubsection{Fubini arrangement}

We will use the second half of the canonical sketches described in \Cref{fubini} to represent the regions. 
We define blocks for signed permutations just as we did for sketches. 
Hence, the regions of the Fubini arrangement in $\R^n$ correspond to signed permutations on $[n]$ where each block is increasing.

In this special class of signed permutations as well, compartments give them an exponential structure. 
This is because there is no condition relating the signs of the last element of a compartment and the first element of the compartment following it. 
This is because the last element of a compartment is necessarily smaller in absolute value than the element following it. 
Also, suppose we are given a signed permutation such that each block is increasing. 
It can be checked that the signed permutation obtained by changing all the signs also satisfies this property.

Using the above observations and the combinatorial interpretation of \eqref{genfunint}, we get that
\begin{equation*}
    \left(\frac{e^x}{2 - e^x}\right)^{\frac{t + 1}{2}}
\end{equation*}
is the exponential generating function for signed permutations where each block is increasing where $t$ keeps track of the number of positive compartments. 
This agrees with the generating function for the characteristic polynomial we get from \Cref{gesa} since we have
\begin{align*}
    F(x) &= \left(\frac{1}{2e^x - 1}\right),\\
    G(x) &= 1.
\end{align*}

\subsubsection{Threshold arrangement}

From \Cref{threshsketch}, we can see that the regions of the threshold arrangement in $\R^n$ correspond to signed permutations on $[n]$ where each block is increasing and the first block has size greater than $1$. 
If such a permutation starts with $\overset{-}{1}$, we instead use the signed permutation obtained by changing $\overset{-}{1}$ to $\overset{+}{1}$ to represent the region. 
Similar to how we obtained the generating function for the statistic for type $D$ from the one for type $C$, we obtain our generating function from the one we have for the Fubini arrangement.

Suppose that we are given $i \in [n]$ and a signed permutation $\sigma$ on $[n] \setminus \{i\}$ whose blocks are increasing. 
If $i = 1$ we construct the signed permutation on $[n]$ obtained by appending $\overset{-}{1}$ to the front of $\sigma$. 
If $i > 1$, and the first element of $\sigma$ is $\overset{\pm}{j}$. 
We construct the signed permutation on $[n]$ obtained by appending $\overset{\mp}{i}$ to the start of $\sigma$. 
In both cases, it can be checked that the number of positive compartment of the new signed permutation constructed is the same as that for $\sigma$.

This shows that the distribution of the statistic `number of positive compartments' on the signed permutations that correspond to regions of the threshold arrangement is
\begin{equation*}
    (1 - x)\left(\frac{e^x}{2 - e^x}\right)^{\frac{t + 1}{2}}.
\end{equation*}
This agrees with the generating function for the characteristic polynomial we get from \Cref{gesa} since we have
\begin{align*}
    F(x) &= \left(\frac{1 + x}{2e^x - 1}\right),\\
    G(x) &= 1 + x.
\end{align*}

\subsection{Some deformations}

Deformations of the threshold arrangement have not been as well-studied as those of the braid arrangement. 
However, the finite field method has been used to compute the characteristic polynomial for some deformations. 
In \cite{seo_shi, seo_cat}, Seo computed the characteristic polynomials of the so called Shi and Catalan threshold arrangements. 
Expressions for the characteristic polynomials of more general deformations have been computed in \cite{balasubramanian2019generalized}.

In this section, we use the sketches and moves technique to obtain certain non-nesting partitions that are in bijection with the regions of the Catalan and Shi threshold arrangements. 
We do this by considering these arrangements as sub-arrangements of the type $C$ Catalan arrangement (\Cref{typecsec}). 
Unfortunately, we were not able to directly count the non-nesting partitions we obtained since their description is not as simple as the ones we have seen before.

Fix $n \geq 2$ throughout this section. 
Recall that we studied the type $C$ Catalan arrangement by considering a translation of it called $\C_n$ whose hyperplane are given by \eqref{Cnhyp} and whose regions correspond to symmetric sketches of size $n$ (see \Cref{symsk}). 
Symmetric sketches can also be viewed as labeled symmetric non-nesting partitions (see \Cref{slnnpex}).

\subsubsection{Catalan threshold}

The Catalan threshold arrangement in $\mathbb{R}^n$ consists of the hyperplanes
\begin{equation*}
    X_i+X_j=-1,0,1
\end{equation*}
for all $1 \leq i < j\leq n$. 
The translated arrangement by setting $X_i=x_i + \frac{1}{2}$, which we call $\CT_n$, has hyperplanes
\begin{equation*}
    x_i+x_j=-2,-1,0
\end{equation*}
for all $1 \leq i < j \leq n$. 
We consider this arrangement as a sub-arrangement of $\C_n$. 
Using Bernardi's idea of moves, we can define an equivalence on the symmetric sketches such that two sketches are equivalent if they lie in the same region of $\CT_n$.

An $\alpha_+$ letter is an $\alpha$-letter whose subscript is positive. 
We similarly define $\alpha_-, \beta_+$ and $\beta_-$ letters. 
The `mod-value' of a letter \lt{i}{s} is $|i|$.

The hyperplanes in $\C_n$ that are not in $\CT_n$ are
\begin{align*}
    2x_i&=-2,-1,0\\
    x_i-x_j&=-1,0,1
\end{align*}
where $1 \leq i < j \leq n$. 
Changing the inequality corresponding to exactly one of these hyperplanes is given by the following moves on a sketch, which we call $\CT$ moves.

\begin{enumerate}[label = (\alph*)]
    \item Swapping the $2n^{th}$ and $(2n+1)^{th}$ letter.
    \begin{center}
        \begin{tikzpicture}
        \node (-1) at (-0.5,0) {$\pm i$};
        \node (1) at (0.5,0) {$\mp i$};
        \node (a) at (-1.5,1) {};
        \node (b) at (1.5,1) {};
        \draw [blue](0,-0.5)--(0,0.5);
        \draw (-1.north)..controls +(up:5mm) and +(right:5mm)..(a.north);
        \draw (1.north)..controls +(up:5mm) and +(left:5mm)..(b.north);
        \node at (2,0.5) {$\longleftrightarrow$};
    \end{tikzpicture}
    \begin{tikzpicture}
        \node (-1) at (-0.5,0) {$\mp i$};
        \node (1) at (0.5,0) {$\pm i$};
        \node (a) at (-1,1) {};
        \node (b) at (1,1) {};
        \draw [blue](0,-0.5)--(0,0.5);
        \draw (1.north)..controls +(up:5mm) and +(right:5mm)..(a.north);
        \draw (-1.north)..controls +(up:5mm) and +(left:5mm)..(b.north);
    \end{tikzpicture}
    \end{center}
    This corresponds to changing the inequality corresponding to a hyperplane of the form $2x_i=-2$ or $2x_i=0$.
    
    \item Swapping the $n^{th}$ and $(n+1)^{th}$ $\alpha$-letter if they are consecutive (along with the $n^{th}$ and $(n+1)^{th}$ $\beta$).
    \begin{center}
        \begin{tikzpicture}
        \node (-2) at (-1.5,0) {$-i$};
        \node (-1) at (-1.1,0) {$i$};
        \node at (-0.5,0) {$\cdots$};
        \node at (0.5,0) {$\cdots$};
        \node (1) at (1.1,0) {$-i$};
        \node (2) at (1.5,0) {$i$};
        \draw [blue](0,-0.25)--(0,0.25);
        \draw (-2.north)..controls +(up:10mm) and +(up:10mm)..(1.north);
        \draw (-1.north)..controls +(up:10mm) and +(up:10mm)..(2.north);
        \node at (2.5,0.5) {$\longleftrightarrow$};
        \end{tikzpicture}
        \begin{tikzpicture}
        \node (-2) at (-1.5,0) {$i$};
        \node (-1) at (-1.1,0) {$-i$};
        \node at (-0.5,0) {$\cdots$};
        \node at (0.5,0) {$\cdots$};
        \node (1) at (1.1,0) {$i$};
        \node (2) at (1.5,0) {$-i$};
        \draw [blue](0,-0.25)--(0,0.25);
        \draw (-2.north)..controls +(up:10mm) and +(up:10mm)..(1.north);
        \draw (-1.north)..controls +(up:10mm) and +(up:10mm)..(2.north);
        \end{tikzpicture}
    \end{center}
    This corresponds to changing the inequality corresponding to a hyperplane of the form $2x_i=-1$.
    
    \item Swapping consecutive $\alpha_+$ and $\beta_+$ letters (along with their negatives).
    \begin{center}
        \begin{tikzpicture}
        \node (-1) at (-0.5,0) {$i$};
        \node (1) at (0.5,0) {$j$};
        \node (a) at (-1.5,1) {};
        \node (b) at (1.5,1) {};
        \draw (-1.north)..controls +(up:5mm) and +(right:5mm)..(a.north);
        \draw (1.north)..controls +(up:5mm) and +(left:5mm)..(b.north);
        \node (-1') at (-0.5,-2) {$-j$};
        \node (1') at (0.5,-2) {$-i$};
        \node (a') at (-1.5,-1) {};
        \node (b') at (1.5,-1) {};
        \draw (-1'.north)..controls +(up:5mm) and +(right:5mm)..(a'.north);
        \draw (1'.north)..controls +(up:5mm) and +(left:5mm)..(b'.north);
        \node at (2.2,-0.5) {$\longleftrightarrow$};
    \end{tikzpicture}
    \begin{tikzpicture}
        \node (-1) at (-0.5,0) {$j$};
        \node (1) at (0.5,0) {$i$};
        \node (a) at (-1,1) {};
        \node (b) at (1,1) {};
        \draw (1.north)..controls +(up:5mm) and +(right:5mm)..(a.north);
        \draw (-1.north)..controls +(up:5mm) and +(left:5mm)..(b.north);
        \node (-1') at (-0.5,-2) {$-i$};
        \node (1') at (0.5,-2) {$-j$};
        \node (a') at (-1,-1) {};
        \node (b') at (1,-1) {};
        \draw (1'.north)..controls +(up:5mm) and +(right:5mm)..(a'.north);
        \draw (-1'.north)..controls +(up:5mm) and +(left:5mm)..(b'.north);
    \end{tikzpicture}
    \end{center}
    This corresponds to changing the inequality corresponding to a hyperplane of the form $x_i-x_j=1$.
    
    \item Swapping $\{\lt{i}{0},\lt{j}{0}\}$ as well as $\{\lt{i}{1},\lt{j}{1}\}$ if both pairs are consecutive (as well as their negatives) where $i, j \in [n]$ are distinct.
    \begin{center}
        \begin{tikzpicture}
        \node (-2) at (-1.5,0) {$i$};
        \node (-1) at (-1,0) {$j$};
        \node at (0,0) {$\cdots$};
        \node (1) at (1,0) {$i$};
        \node (2) at (1.5,0) {$j$};
        \draw (-2.north)..controls +(up:10mm) and +(up:10mm)..(1.north);
        \draw (-1.north)..controls +(up:10mm) and +(up:10mm)..(2.north);
        \node (-2') at (-1.5,-2) {$-j$};
        \node (-1') at (-1,-2) {$-i$};
        \node at (0,-2) {$\cdots$};
        \node (1') at (1,-2) {$-j$};
        \node (2') at (1.5,-2) {$-i$};
        \draw (-2'.north)..controls +(up:10mm) and +(up:10mm)..(1'.north);
        \draw (-1'.north)..controls +(up:10mm) and +(up:10mm)..(2'.north);
        \node at (2.3,-0.65) {$\longleftrightarrow$};
        \end{tikzpicture}
        \begin{tikzpicture}
        \node (-2) at (-1.5,0) {$j$};
        \node (-1) at (-1,0) {$i$};
        \node at (0,0) {$\cdots$};
        \node (1) at (1,0) {$j$};
        \node (2) at (1.5,0) {$i$};
        \draw (-2.north)..controls +(up:10mm) and +(up:10mm)..(1.north);
        \draw (-1.north)..controls +(up:10mm) and +(up:10mm)..(2.north);
        \node (-2') at (-1.5,-2) {$-i$};
        \node (-1') at (-1,-2) {$-j$};
        \node at (0,-2) {$\cdots$};
        \node (1') at (1,-2) {$-i$};
        \node (2') at (1.5,-2) {$-j$};
        \draw (-2'.north)..controls +(up:10mm) and +(up:10mm)..(1'.north);
        \draw (-1'.north)..controls +(up:10mm) and +(up:10mm)..(2'.north);
        \end{tikzpicture}
    \end{center}
    This corresponds to changing the inequality corresponding to the hyperplane $x_i-x_j=1$.
\end{enumerate}
Two sketches are in the same region of $\CT_n$ if and only if they are related by a series of $\CT$ moves. 
We call such sketches $\CT$ equivalent.

Consider the sketches to be ordered in the lexicographic order induced by the following order on the letters.
\begin{equation*}
    \lt{n}{0} \succ \cdots \succ \lt{1}{0} \succ \lt{-1}{-1} \succ \cdots \succ \lt{-n}{-1} \succ \lt{n}{1} \succ \cdots \succ \lt{1}{1} \succ \lt{-1}{0} \succ \cdots \succ \lt{-n}{0}
\end{equation*}
In other words, the $\alpha$-letters are greater than the $\beta$-letters and for letters of the same type, the order is given by comparing the subscripts.

A sketch is called $\CT$ maximal if it is greater (in the lexicographic order) than all sketches to which it is $\CT$ equivalent. 
Hence the regions of $\CT_n$ are in bijection with the $\CT$ maximal sketches.

\begin{theorem}\label{catthresh}
    A symmetric sketch is $\CT$ maximal if and only if the following hold.
    \begin{enumerate}
        \item If a $\beta$-letter is followed by an $\alpha$-letter, they should be of opposite signs and different mod-values.
        \begin{center}
            \begin{tikzpicture}
            \node (-1) at (-0.5,0) {$X$};
            \node (1) at (0.5,0) {$Y$};
            \node (a) at (-1.5,1) {};
            \node (b) at (1.5,1) {};
            \draw (-1.north)..controls +(up:5mm) and +(right:5mm)..(a.north);
            \draw (1.north)..controls +(up:5mm) and +(left:5mm)..(b.north);
            \node at (4,0.5) {$\implies \text{X and Y of opposite sign}$};
            \node at (4.3,0) {and different mod value.};
            \end{tikzpicture}
        \end{center}
        \item If two $\alpha$-letters and their corresponding $\beta$-letters are both consecutive and of the same sign then the subscript of the first one should be greater.
        \begin{center}
            \begin{tikzpicture}
            \node (-2) at (-1.5,0) {$a_1$};
            \node (-1) at (-1,0) {$a_2$};
            \node at (0,0) {$\cdots$};
            \node (1) at (1,0) {$a_1$};
            \node (2) at (1.5,0) {$a_2$};
            \draw (-2.north)..controls +(up:10mm) and +(up:10mm)..(1.north);
            \draw (-1.north)..controls +(up:10mm) and +(up:10mm)..(2.north);
            \node at (5,0.5) {and $a_1,a_2$ same sign $\implies a_1 > a_2$.};
            \end{tikzpicture}
        \end{center}
        \item If the $n^{th}$ and $(n+1)^{th}$ $\alpha$-letters are consecutive, then so are the $(n-1)^{th}$ and $n^{th}$ with the $n^{th}$ $\alpha$-letter being positive. 
        In such a situation, if the $(n-1)^{th}$ $\alpha$-letter is negative and the $(n-1)^{th}$ and $n^{th}$ $\beta$-letters are consecutive, the $(n-1)^{th}$ $\alpha$-letter should have a subscript greater than that of the $(n+1)^{th}$ $\alpha$.
        \item If the $(2n-1)^{th}$ and $(2n+1)^{th}$ letters are both $\beta$-letters of the same sign and their corresponding $\alpha$-letters are consecutive, the subscript of the $(2n-1)^{th}$ letter should be greater than that of the $(2n+1)^{th}$.
        \begin{center}
        \begin{tikzpicture}
            \node (-2) at (-1.5,0) {$X$};
            \node (-1) at (-1,0) {$Y$};
            \node at (0,0) {$\cdots$};
            \node (1) at (1,0) {$X$};
            \node (3) at (1.5,0) {$-Y$};
            \node (a) at (3,0.75) {};
            \draw [blue](2,-0.25)--(2,0.25);
            \node (2) at (2.5,0) {$Y$};
            \draw (-2.north)..controls +(up:10mm) and +(up:10mm)..(1.north);
            \draw (3.north)..controls +(up:5mm) and +(left:5mm)..(a.north);
            \draw (-1.north)..controls +(up:10mm) and +(up:10mm)..(2.north);
            \node at (6.5,0.5) {and $\quad X,Y$ same sign $\implies X>Y$.};
            \end{tikzpicture}
        \end{center}
    \end{enumerate}
    Hence the regions of $\CT_n$ are in bijection with sketches of the form described above.
\end{theorem}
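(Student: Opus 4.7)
The plan is to establish necessity and sufficiency of conditions (1)--(4) for a symmetric sketch to be $\CT$-maximal.

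For necessity, I will show that violating any of the four conditions enables a single $\CT$-move or a short composite of them producing a strictly lex-greater equivalent sketch. Condition (1) is handled by move (c) for an interior same-sign $\beta$ followed by $\alpha$, and by move (a) at the central boundary: since the $2n^{th}$ and $(2n+1)^{th}$ letters are conjugates (equal mod-value, opposite signs), having a $\beta$ at position $2n$ followed by an $\alpha$ at $2n+1$ is exactly the situation where $a_{2n}$ is a $\beta$, so move (a) replaces it by an $\alpha$, increasing the lex order at the earliest differing position. Condition (2) is addressed by move (d), applied via the mirror symmetry when the letters are negative; violating it lets one swap the offending same-sign $\alpha$-pair (and their $\beta$-counterparts) to bring the larger subscript forward. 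Condition (3) has two parts: if the $n^{th}$ and $(n+1)^{th}$ $\alpha$-letters are adjacent and the $n^{th}$ is negative, move (b) directly replaces it by an $\alpha_+$ and strictly increases the lex order; the adjacency requirement on the $(n-1)^{th}$ and $n^{th}$ $\alpha$-letters follows by combining moves (b) and (c) to eliminate an intervening $\beta$-letter. Condition (4) is the most delicate: it corresponds to the non-applicability of the composite sequence (a), (d), (a), which transposes two $\beta$-letters straddling the central divider by first bringing them onto the same side, swapping them using the consecutivity of their corresponding $\alpha$-letters, and then restoring the middle.

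For sufficiency, I plan to prove that a sketch satisfying all four conditions is the unique $\CT$-maximum of its equivalence class by a rewriting argument. Starting from an arbitrary sketch, iteratively apply a lex-increasing move identified in the necessity part whenever a condition fails. Since the lex-order is a well-order on the finite equivalence class, this process terminates at a sketch satisfying all four conditions. To upgrade from local to global maximality within the class, I would either verify local confluence of this rewriting system by analysing the interaction of each pair of moves (a)--(d) and invoke Newman's lemma, or directly show that any two distinct sketches satisfying conditions (1)--(4) cannot be $\CT$-equivalent, by comparing their $\alpha,\beta$-words (via \Cref{mir,absp}) together with their signed permutations (via \Cref{ord}).

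The main obstacle is condition (4). Its composite move sequence acts across the central divider, so verifying both that it strictly increases the lex order exactly when condition (4) is violated, and that the rewriting system remains confluent at this step, requires a careful case analysis of how moves (a) and (d) interact at the boundary positions $2n-1$, $2n$, $2n+1$. Handling this interaction correctly, while tracking the effect on the $\alpha,\beta$-word (which is altered by moves (a) and (c)) versus the signed permutation (altered by (b) and (d)), is where the main technical work lies.
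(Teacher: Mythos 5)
Your necessity argument is on the right track and aligns with the paper's: each violated condition supplies a short move sequence that strictly increases the lexicographic order, with conditions (3) and (4) requiring composites of length two or three. The paper's remark even makes this explicit: the four conditions characterize $2$-locally maximal sketches, so the real content of the theorem is that $2$-local maximality implies global maximality.

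The gap is in the sufficiency argument. You correctly identify that terminating the rewriting process only yields a locally maximal sketch, and you offer two routes to globality --- Newman's lemma via local confluence, or a direct uniqueness argument comparing $\alpha,\beta$-words and signed permutations --- but neither is developed, and neither is what actually makes the argument go through. The crucial tool the paper uses, which your proposal never identifies, is the invariant that survives all $\CT$-moves: since the hyperplanes $x_i + x_j = -2, -1, 0$ (for $i \neq j$) belong to $\CT_n$, the relative order of any pair of letters with \emph{opposite signs and different mod-values} must agree across $\CT$-equivalent sketches (this is \eqref{eqp} in the paper). The paper then assumes $w$ satisfies (1)--(4), supposes a $\CT$-equivalent $w'$ is lex-larger, looks at the first differing position, and runs a seven-case analysis on the letter types of $w_p$ and $w_p'$; in every case the invariant \eqref{eqp} together with the hypotheses on $w$ forces a contradiction (often via \Cref{ord} or via finding a forbidden $\beta_+\alpha_+$ or $\beta_-\alpha_-$ adjacency). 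Your proposal of comparing $\alpha,\beta$-words and signed permutations directly is not the right invariant: moves of type (a) and (c) alter the $\alpha,\beta$-word, while moves of type (b) and (d) alter the signed permutation, so neither is preserved within an equivalence class. Without the positional invariant \eqref{eqp}, the confluence/uniqueness step has no traction, and that is where the substantive difficulty of the proof lies.
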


\begin{remark}
    The idea of ordering sketches and choosing the maximal sketch in each region of $\CT_n$ to represent it is the same one used by Bernardi \cite{ber} to study certain deformations of the braid arrangement. 
    In fact, \cite[Lemma 8.13]{ber} shows that in this case, any sketch that is locally maximal (greater than any sketch that can be obtained by applying a single move) is maximal. 
    Note that the sketches described in the above theorem are precisely the \emph{2-locally maximal} sketches. 
    That is, these are the sketches that can neither be converted into a greater sketch by applying a single $\CT$ move nor by applying two $\CT$ moves. 
    It is clear that any $\CT$ maximal sketch is 2-locally maximal. 
    The theorem states the converse is true as well.
\end{remark}

\begin{proof}[Proof of \Cref{catthresh}]
We first show that these conditions are required for a sketch to be $\CT$ maximal.
\begin{enumerate}
    \item The first condition is necessary since the $\CT$ moves of type (a) or (c) would result in a greater sketch if it were false.
    
    \item The second condition corresponds to $\CT$ moves of type (d).
    
    \item The part about the $n^{th}$ $\alpha$-letter being positive if the $n^{th}$ and $(n+1)^{th}$ $\alpha$-letters are consecutive is due to $\CT$ moves of type (c). 
    Suppose the letter before the $n^{th}$ $\alpha$-letter is a $\beta$-letter. 
    Then it can't be positive since we have already seen that condition (1) of the theorem statement must be satisfied. 
    But if it is negative, we can do the following to obtain a larger $\CT$ equivalent sketch:
    \begin{center}
        \begin{tikzpicture}[scale=0.8,xscale=-1]
            \node (-2) at (-1.5,0) {-$i$};
            \node (-1) at (-1,0) {$i$};
            \node at (-0.35,0) {\tiny $\cdots$};
            \node at (0.35,0) {\tiny $\cdots$};
            \node (1) at (1,0) {-$i$};
            \node (3) at (1.5,0) {$i$};
            \node (a) at (3,0.75) {};
            \draw [blue](0,-0.25)--(0,0.25);
            \node (2) at (2.15,0) {-$j$};
            \draw (-2.north)..controls +(up:10mm) and +(up:10mm)..(1.north);
            \draw (2.north)..controls +(up:5mm) and +(left:5mm)..(a.north);
            \draw (-1.north)..controls +(up:10mm) and +(up:10mm)..(3.north);
            \draw (-2.5,0.35) node {$\longrightarrow$};
        \end{tikzpicture}
        \begin{tikzpicture}[scale=0.8,xscale=-1]
            \node (-2) at (-1.5,0) {$i$};
            \node (-1) at (-1,0) {-$i$};
            \node at (-0.35,0) {\tiny $\cdots$};
            \node at (0.35,0) {\tiny $\cdots$};
            \node (1) at (1,0) {$i$};
            \node (3) at (1.5,0) {-$i$};
            \node (a) at (3,0.75) {};
            \draw [blue](0,-0.25)--(0,0.25);
            \node (2) at (2.15,0) {-$j$};
            \draw (-2.north)..controls +(up:10mm) and +(up:10mm)..(1.north);
            \draw (2.north)..controls +(up:5mm) and +(left:5mm)..(a.north);
            \draw (-1.north)..controls +(up:10mm) and +(up:10mm)..(3.north);
            \draw (-2.5,0.35) node {$\longrightarrow$};
        \end{tikzpicture}
        \begin{tikzpicture}[scale=0.8,xscale=-1]
            \node (-2) at (-1.5,0) {$i$};
            \node (-1) at (-1,0) {-$i$};
            \node at (-0.35,0) {\tiny $\cdots$};
            \node at (0.35,0) {\tiny $\cdots$};
            \node (1) at (1,0) {$i$};
            \node (3) at (1.5,0) {-$j$};
            \node (a) at (2.75,0.75) {};
            \draw [blue](0,-0.25)--(0,0.25);
            \node (2) at (2.15,0) {-$i$};
            \draw (-2.north)..controls +(up:10mm) and +(up:10mm)..(1.north);
            \draw (3.north)..controls +(up:5mm) and +(left:7mm)..(a.north);
            \draw (-1.north)..controls +(up:10mm) and +(up:10mm)..(2.north);
        \end{tikzpicture}
    \end{center}
    Hence the letter before the $n^{th}$ $\alpha$-letter has to be an $\alpha$-letter. 
    Now, suppose that the $(n-1)^{th}$ $\alpha$-letter is negative and the $(n-1)^{th}$ and $n^{th}$ $\beta$-letters are consecutive. 
    Let the subscript of the $(n-1)^{th}$ $\alpha$-letter be $-k$ and that of the $(n+1)^{th}$ $\alpha$-letter be $-i$ for some $k,i \in [n]$. 
    If $-k<-i$, we can do the following to obtain a larger $\CT$ equivalent sketch:
    \begin{center}
        \begin{tikzpicture}[scale=0.75]
            \node (-3) at (-2.5,0) {\small-$k$};
            \node (-2) at (-2,0) {\small$i$};
            \node (-1) at (-1.5,0) {\small-$i$};
            \node at (-0.5,0) {\small$\cdots$};
            \node at (0.5,0) {\small$\cdots$};
            \node (3) at (1.5,0) {\small-$k$};
            \node (1) at (2,0) {\small$i$};
            \node (2) at (2.5,0) {\small-$i$};
            \draw [blue](0,-0.25)--(0,0.25);
            \draw (-2.north)..controls +(up:10mm) and +(up:10mm)..(1.north);
            \draw (-3.north)..controls +(up:10mm) and +(up:10mm)..(3.north);
            \draw (-1.north)..controls +(up:10mm) and +(up:10mm)..(2.north);
            \draw (3.25,0.35) node {$\longrightarrow$};
            \end{tikzpicture}
            \begin{tikzpicture}[scale=0.75]
            \node (-3) at (-2.5,0) {\small-$k$};
            \node (-2) at (-2,0) {\small-$i$};
            \node (-1) at (-1.5,0) {\small$i$};
            \node at (-0.5,0) {\small$\cdots$};
            \node at (0.5,0) {\small$\cdots$};
            \node (3) at (1.5,0) {\small-$k$};
            \node (1) at (2,0) {\small-$i$};
            \node (2) at (2.5,0) {\small$i$};
            \draw [blue](0,-0.25)--(0,0.25);
            \draw (-2.north)..controls +(up:10mm) and +(up:10mm)..(1.north);
            \draw (-3.north)..controls +(up:10mm) and +(up:10mm)..(3.north);
            \draw (-1.north)..controls +(up:10mm) and +(up:10mm)..(2.north);
            \draw (3.25,0.35) node {$\longrightarrow$};
            \end{tikzpicture}
            \begin{tikzpicture}[scale=0.75]
            \node (-3) at (-2.5,0) {\small-$i$};
            \node (-2) at (-2,0) {\small-$k$};
            \node (-1) at (-1.5,0) {\small$i$};
            \node at (-0.5,0) {\small$\cdots$};
            \node at (0.5,0) {\small$\cdots$};
            \node (3) at (1.5,0) {\small-$i$};
            \node (1) at (2,0) {\small-$k$};
            \node (2) at (2.5,0) {\small$i$};
            \draw [blue](0,-0.25)--(0,0.25);
            \draw (-2.north)..controls +(up:10mm) and +(up:10mm)..(1.north);
            \draw (-3.north)..controls +(up:10mm) and +(up:10mm)..(3.north);
            \draw (-1.north)..controls +(up:10mm) and +(up:10mm)..(2.north);
            \end{tikzpicture}
    \end{center}
    Hence we must have $-k>-i$ in this case.
    \item Suppose the $(2n-1)^{th}$ and $(2n+1)^{th}$ letters are both $\beta$-letters of the same sign and their corresponding $\alpha$-letters are consecutive but the subscript $X$ of the $(2n-1)^{th}$ letter is less than the subscript $Y$ of the $(2n+1)^{th}$ letter. We can do the following to obtain a larger $\CT$ equivalent sketch:
    \begin{center}
        \begin{tikzpicture}[scale=0.8]
            \node (-2) at (-1.5,0) {$X$};
            \node (-1) at (-0.75,0) {$Y$};
            \node at (0,0) {$\cdots$};
            \node (1) at (0.75,0) {$X$};
            \node (3) at (1.5,0) {$-Y$};
            \node (a) at (3,0.75) {};
            \draw [blue](2,-0.25)--(2,0.25);
            \node (2) at (2.75,0) {$Y$};
            \draw (-2.north)..controls +(up:10mm) and +(up:10mm)..(1.north);
            \draw (3.north)..controls +(up:5mm) and +(left:5mm)..(a.north);
            \draw (-1.north)..controls +(up:10mm) and +(up:10mm)..(2.north);
            \draw (3.5,0.35) node {$\longrightarrow$};
        \end{tikzpicture}
        \begin{tikzpicture}[scale=0.8]
            \node (-2) at (-1.5,0) {$X$};
            \node (-1) at (-0.75,0) {$Y$};
            \node at (0,0) {$\cdots$};
            \node (1) at (0.75,0) {$X$};
            \node (3) at (1.5,0) {$Y$};
            \node (a) at (3.25,0.75) {};
            \draw [blue](2,-0.25)--(2,0.25);
            \node (2) at (2.75,0) {$-Y$};
            \draw (-2.north)..controls +(up:10mm) and +(up:10mm)..(1.north);
            \draw (2.north)..controls +(up:4mm) and +(left:4mm)..(a.north);
            \draw (-1.north)..controls +(up:10mm) and +(up:10mm)..(3.north);
            \draw (3.75,0.35) node {$\longrightarrow$};
        \end{tikzpicture}
        \begin{tikzpicture}[scale=0.8]
            \node (-2) at (-1.5,0) {$Y$};
            \node (-1) at (-0.75,0) {$X$};
            \node at (0,0) {$\cdots$};
            \node (1) at (0.75,0) {$Y$};
            \node (3) at (1.5,0) {$X$};
            \node (a) at (3.25,0.75) {};
            \draw [blue](2,-0.25)--(2,0.25);
            \node (2) at (2.75,0) {$-X$};
            \draw (-2.north)..controls +(up:10mm) and +(up:10mm)..(1.north);
            \draw (2.north)..controls +(up:4mm) and +(left:4mm)..(a.north);
            \draw (-1.north)..controls +(up:10mm) and +(up:10mm)..(3.north);
        \end{tikzpicture}
    \end{center}
\end{enumerate}

We now have to prove that these conditions are sufficient for a sketch to be $\CT$ maximal. 
Suppose $w$ is a symmetric sketch that satisfies the four properties mentioned in the statement of the theorem. 
Suppose there is a sketch $w'$ which is $\CT$ equivalent to $w$ but larger in the lexicographic order. 
This means that if $w=w_1\cdots w_{4n}$ and $w'=w'_1\cdots w'_{4n}$, there is some $p \in [4n]$ such that
\begin{equation*}
    w_i=w'_i\text{ for }i \in [p-1]\text{ and }w_p \prec w'_p.
\end{equation*}

The possible ways in which this can happen are listed below.
\begin{enumerate}
    \item $w_p$ is a $\beta_+$ letter and $w'_p$ is an $\alpha_+$ letter.
    \item $w_p$ is a $\beta_-$ letter and $w'_p$ is an $\alpha_-$ letter.
    \item $w_p$ is a $\beta_+$ letter and $w'_p$ is an $\alpha_-$ letter.
    \item $w_p$ is a $\beta_-$ letter and $w'_p$ is an $\alpha_+$ letter.
    \item $w_p$ and $w'_p$ are both $\alpha_+$ letters.
    \item $w_p$ and $w'_p$ are both $\alpha_-$ letters.
    \item $w_p$ is an $\alpha_-$ letter and $w'_p$ is an $\alpha_+$ letter.
\end{enumerate}

The case of both $w_p$ and $w'_p$ being $\beta$-letters is not possible since, by the properties of a sketch, this would mean $w_p = w'_p$. 
Since $\alpha_- \prec \alpha_+$ we cannot have $w_p$ being an $\alpha_+$ letter and $w'_p$ being an $\alpha_-$ letter. 
We will now show that each case leads to a contradiction, which will complete the proof of the theorem.

Before going forward, we formulate the meaning of $w$ and $w'$ being $\CT$ equivalent in terms of sketches. 
Since they have to be in the same region of $\CT_n$, the inequalities corresponding to the hyperplanes
\begin{equation*}
    x_i+x_j=-2,-1,0
\end{equation*}
for all $1\leq i<j \leq n$ are the same in both sketches. 
This means that the relationship between the pairs of the form
\begin{equation*}
    \{\lt{i}{1},\lt{-j}{-1}\},\ \{\lt{i}{1},\lt{-j}{0}\},\ \{\lt{i}{0},\lt{-j}{-1}\},\ \text{and }\{\lt{i}{0},\lt{-j}{0}\}
\end{equation*}
for any distinct $i, j \in [n]$ are the same in both $w$ and $w'$. 
This can be written as follows:
\begin{equation}\label{eqp}
\begin{aligned}
    &\text{The relationship between letters of opposite sign and}
    \\&\text{different mod value have to be the same in both $w$ and $w'$.}
\end{aligned}
\end{equation}

\begin{flushleft}
\underline{\textbf{Case 1:} $w_p$ is a $\beta_+$ letter and $w'_p$ is an $\alpha_+$ letter.}
\end{flushleft}

In this case $w$ and $w'$ are of the form
\begin{align*}
    w &= w_1 \cdots w_{p-1} \lt{k}{1} \cdots \\
    w' &= w'_1 \cdots w'_{p-1} \lt{l}{0} \cdots
\end{align*}
for some $k,l \in [n]$. 
Hence, \lt{l}{0} appears after \lt{k}{1} in $w$. 
By \eqref{eqp}, every letter between \lt{k}{1} and \lt{l}{0} in $w$ should be positive or one of \lt{-l}{-1} and \lt{-l}{0}. 
If all the letters are positive, since \lt{k}{1} is a $\beta_+$ letter and \lt{l}{0} is an $\alpha_+$ letter, there would be a consecutive pair of the form $\beta_+\alpha_+$ in $w$, which is a contradiction to property (1).

Now suppose \lt{-l}{0} is between \lt{k}{1} and \lt{l}{0} in $w$. 
It cannot be immediately before \lt{l}{0} since this would contradict property (1). 
But if it is not immediately before \lt{l}{0}, since \lt{-l}{0} and \lt{l}{0} are negatives of each other, there should be some negative letter between them. 
But this letter cannot be \lt{-l}{-1} (since this should be before \lt{-l}{0}). 
This is a contradiction to \eqref{eqp}. 
Hence \lt{-l}{0} cannot be between \lt{k}{1} and \lt{l}{0}.

So we must have \lt{-l}{-1} between \lt{k}{1} and \lt{l}{0} in $w$. 
Again, \lt{-l}{-1} cannot be immediately before \lt{l}{0} since this would contradict property (3). 
This means that there is at least one letter between \lt{-l}{-1} and \lt{l}{0} and all such letters are positive. 
If one of them is a $\beta_+$ letter, since \lt{l}{0} is an $\alpha_+$ letter, there would be a consecutive pair of the form $\beta_+\alpha_+$, which is a contradiction to property (1). 
Hence all the letters between \lt{-l}{-1} and \lt{l}{0} are $\alpha_+$ letters. 
But this is impossible by \Cref{ord}.

\begin{flushleft}
\underline{\textbf{Case 2:} $w_p$ is a $\beta_-$ letter and $w'_p$ is an $\alpha_-$ letter.}
\end{flushleft}

In this case $w$ and $w'$ are of the form
\begin{align*}
    w &= w_1 \cdots w_{p-1} \lt{-k}{0} \cdots \\
    w' &= w'_1 \cdots w'_{p-1} \lt{-l}{-1} \cdots
\end{align*}
for some $k,l \in [n]$. 
Hence, \lt{-l}{-1} appears after \lt{-k}{0} in $w$. 
By \eqref{eqp}, each letter between \lt{-k}{0} and \lt{-l}{-1} in $w$ has to be negative or one of \lt{l}{0} and \lt{l}{1}. 
Just as before, all letters between \lt{-k}{0} and \lt{-l}{-1} cannot be negative. 
The fact that \lt{l}{1} cannot be between \lt{-k}{0} and \lt{-l}{-1} also has a similar proof as in the last case.

So we must have \lt{l}{0} between \lt{-k}{0} and \lt{-l}{-1}. 
All the letters between \lt{l}{0} and \lt{-l}{-1} have to be negative. 
There are no $\beta_-$ letters between them, otherwise there would be consecutive letters of the form $\beta_-\alpha_-$, which contradicts property (1). 
So if there are letters between \lt{l}{0} and \lt{-l}{-1} they should all be $\alpha_-$ letters, but this cannot happen by \Cref{ord}. 
So \lt{l}{0} and \lt{-l}{-1} are consecutive. 
By property (3), the letter before \lt{l}{0} should be an $\alpha$-letter. 
And by \eqref{eqp}, it is an $\alpha_-$ letter. 
But since \lt{-k}{0} is a $\beta_-$ letter and all letters between \lt{-k}{0} and \lt{l}{0} are negative, there will be a consecutive pair of the form $\beta_-\alpha_-$, which is a contradiction to property (1).

\begin{flushleft}
\underline{\textbf{Case 3:} $w_p$ is a $\beta_+$ letter and $w'_p$ is an $\alpha_-$ letter.}
\end{flushleft}

In this case $w$ and $w'$ are of the form
\begin{align*}
    w &= w_1 \cdots w_{p-1} \lt{k}{1} \cdots \\
    w' &= w'_1 \cdots w'_{p-1} \lt{-l}{-1} \cdots
\end{align*}
for some $k,l \in [n]$. 
If $k \neq l$, this will contradict \eqref{eqp} since \lt{k}{1} will be before \lt{-l}{-1} in $w$ but not in $w'$. 
So \lt{-k}{-1} appears after \lt{k}{1} in $w$ and all letters between them are negative by \eqref{eqp} (note that \lt{k}{0} is before \lt{k}{1}). 
Again, \lt{-k}{-1} cannot be immediately after \lt{k}{1} since this would contradict property (1) and if there were some letters between \lt{k}{1} and \lt{-k}{-1}, at least one of them would be negative, which contradicts \eqref{eqp}.

\begin{flushleft}
\underline{\textbf{Case 4:} $w_p$ is a $\beta_-$ letter and $w'_p$ is an $\alpha_+$ letter.}
\end{flushleft}

Arriving at a contradiction in this case follows using the same method as in the last case.

\begin{flushleft}
\underline{\textbf{Case 5:} $w_p$ and $w'_p$ are both $\alpha_+$ letters.}
\end{flushleft}

In this case $w$ and $w'$ are of the form
\begin{align*}
    w &= w_1 \cdots w_{p-1} \lt{k}{0} \cdots \\
    w' &= w'_1 \cdots w'_{p-1} \lt{l}{0} \cdots
\end{align*}
for some $1 \leq k < l \leq n$. 
We split this case into two possibilities depending on whether or not \lt{l}{0} is before \lt{k}{1}.

\begin{flushleft}
\underline{\textbf{Case 5(a):} \lt{l}{0} is before \lt{k}{1} in $w$.}
\end{flushleft}

In this case $w$ and $w'$ are of the form
\begin{align*}
    w&=w_1 \cdots w_{p-1} \lt{k}{0} \cdots \lt{l}{0} \cdots \lt{k}{1} \cdots \lt{l}{1} \cdots \\
    w'&=w'_1 \cdots w'_{p-1} \lt{l}{0} \cdots.
\end{align*}
By \eqref{eqp}, each letter between \lt{k}{0} and \lt{l}{0} in $w$ is positive or one of \lt{-l}{-1} or \lt{-l}{0}. 
Just as in the \textbf{Case 1}, we can prove that \lt{-l}{-1} and \lt{-l}{0} cannot between \lt{k}{0} and \lt{l}{0}. 
Hence all the letters between \lt{k}{0} and \lt{l}{0} are positive. 
In fact, they all have to be $\alpha$-letters. 
Otherwise we would be a consecutive pair of the form $\beta_+\alpha_+$, which contradicts property (1).

Each letter between \lt{k}{1} and \lt{l}{1} is positive or one of \lt{-k}{-1}, \lt{-l}{-1}, \lt{-k}{0} or \lt{-l}{0}. 
Neither \lt{-k}{0} nor \lt{-l}{0} can be between \lt{k}{1} and \lt{l}{1}, since this would mean that \lt{-k}{-1} or \lt{-l}{-1} is between \lt{k}{0} and \lt{l}{0}, which cannot happen since we have already seen that there are only positive $\alpha$-letters between them.

If \lt{-k}{-1} were between \lt{k}{1} and \lt{l}{1}, it could not be immediately after \lt{k}{1} since this would contradict property (1). 
If there were some letters between \lt{k}{1} and \lt{-k}{-1}, at least one of them would be a negative letter other than \lt{-l}{-1}, which contradicts \eqref{eqp} (since \lt{l}{1} is after \lt{-k}{-1}).

So the only negative letter that can be between \lt{k}{1} and \lt{l}{1} is \lt{-l}{-1}. 
First, suppose that all letters between \lt{k}{1} and \lt{l}{1} are positive. 
Then all of them would have to be $\beta_+$ letters (otherwise there would be consecutive $\beta_+\alpha_+$ which contradicts property (1)). 
Then we would have that all letters between \lt{k}{0} and \lt{l}{0} are $\alpha_+$ letters and all letters between \lt{k}{1} and \lt{l}{1} are $\beta_+$ letters and repeated application of property (2) would give $k>l$, which is a contradiction.

Next, suppose \lt{-l}{-1} is between \lt{k}{1} and \lt{l}{1}. 
If \lt{-l}{-1} is not immediately before \lt{l}{1}, there will be some negative letter other than \lt{-l}{-1} between \lt{k}{1} and \lt{l}{1}, which we have already shown is not possible. 
So \lt{-l}{-1} is immediately before \lt{l}{1} and all the letters between \lt{k}{1} and \lt{-l}{-1} are positive and they have to all be $\beta_+$ letters (otherwise there would be a consecutive pair of the form $\beta_+\alpha_+$). 
If \lt{k'}{1} is the $\beta_+$ letter before \lt{-l}{-1} ($k'$ could be $k$), then \lt{k'}{0} is the letter before \lt{l}{0} and hence we get that the letters between \lt{k}{0} and \lt{k'}{0} are all $\alpha_+$ letters and their corresponding $\beta$-letters are consecutive and so by property (2), $k \geq k'$. 
But property (4) tells us that $k'>l$. 
So we get $k>l$, which is a contradiction.

\begin{flushleft}
\underline{\textbf{Case 5(b):} \lt{l}{0} is after \lt{k}{1} in $w$.}
\end{flushleft}

In this case $w$ and $w'$ are of the form
\begin{align*}
    w&=w_1 \cdots w_{p-1} \lt{k}{0} \cdots \lt{k}{1} \cdots \lt{l}{0} \cdots\\
    w'&=w'_1 \cdots w'_{p-1} \lt{l}{0} \cdots.
\end{align*}
By \eqref{eqp}, each letter between \lt{k}{0} and \lt{l}{0} in $w$ is positive or one of \lt{-l}{-1} or \lt{-l}{0}. 
Just as in \textbf{Case 1}, we can prove that \lt{-l}{-1} and \lt{-l}{0} cannot between \lt{k}{0} and \lt{l}{0}. 
Hence all the letters between \lt{k}{0} and \lt{l}{0} are positive. 
Since \lt{k}{1} is a $\beta_+$ letter and \lt{l}{0} is an $\alpha_+$ letter and all letters in between are positive, there is a consecutive pair of the form $\beta_+\alpha_+$, which is a contradiction to property (1).

\begin{flushleft}
\underline{\textbf{Case 6:} $w_p$ and $w'_p$ are both $\alpha_-$ letters.}
\end{flushleft}

In this case $w$ and $w'$ are of the form
\begin{align*}
    w &= w_1 \cdots w_{p-1} \lt{-k}{-1} \cdots \\
    w' &= w'_1 \cdots w'_{p-1} \lt{-l}{-1} \cdots
\end{align*}
for some $1 \leq l < k \leq n$. 
We split this case into two possibilities depending on whether or not \lt{-l}{-1} is before \lt{-k}{0}.

\begin{flushleft}
\underline{\textbf{Case 6(a):} \lt{-l}{-1} is before \lt{-k}{0} in $w$.}
\end{flushleft}

In this case $w$ and $w'$ are of the form
\begin{align*}
    w&=w_1 \cdots w_{p-1} \lt{-k}{-1} \cdots \lt{-l}{-1} \cdots \lt{-k}{0} \cdots \lt{-l}{0} \cdots \\
    w'&=w'_1 \cdots w'_{p-1} \lt{-l}{-1} \cdots.
\end{align*}
By \eqref{eqp}, each letter between \lt{-k}{-1} and \lt{-l}{-1} is negative or one of \lt{l}{0} or \lt{l}{1}. 
If \lt{l}{1} is between \lt{-k}{-1} and \lt{-l}{-1}, it should not be immediately before \lt{-l}{-1} since this would contradict property (1). 
But then there would be some positive letter other than \lt{l}{0} between \lt{-l}{-1} and \lt{l}{1} which would contradict \eqref{eqp}.

First, suppose \lt{l}{0} is between \lt{-k}{-1} and \lt{-l}{-1}. 
Just as before, using property (1) and \Cref{ord}, we can show that \lt{l}{0} has to be immediately before \lt{-l}{-1}. 
Also, all the letters between \lt{-k}{-1} and \lt{l}{0} have to be negative by \eqref{eqp}. 
By property (3), the letter before \lt{l}{0} has to be an $\alpha$-letter and hence here it is an $\alpha_-$ letter. 
Hence, the letters between \lt{-k}{-1} and \lt{l}{0} have to be $\alpha_-$ letters since otherwise there be a consecutive pair of the form $\beta_-\alpha_-$.

By \eqref{eqp}, each letter between \lt{-k}{0} and \lt{-l}{0} is negative or one of \lt{k}{0}, \lt{l}{0}, \lt{k}{1} or \lt{l}{1}. 
Now, \lt{k}{1} cannot be between \lt{-k}{0} and \lt{-l}{0} since this would mean \lt{k}{0} is between \lt{-k}{-1} and \lt{-l}{-1}, which we have already shown is not possible. 
We have already assumed \lt{l}{0} is between \lt{-k}{-1} and \lt{-l}{-1} and hence it cannot also be between \lt{-k}{0} and \lt{-l}{0}. 
If \lt{k}{0} were between \lt{-k}{0} and \lt{-l}{0}, it could not have been immediately after \lt{-k}{0} since this would contradict property (1). 
But then there would be some positive letter other than \lt{l}{1} between \lt{-k}{0} and \lt{k}{0} (since \lt{-l}{-1} is before \lt{-k}{0} and hence \lt{l}{1} is after \lt{k}{0}), which is a contradiction to \eqref{eqp}. 
This means that the only positive letter between \lt{-k}{0} and \lt{-l}{0} is \lt{l}{1} which is between them since \lt{l}{0} is between \lt{-k}{-1} and \lt{-l}{-1}. 
Since \lt{l}{0} and \lt{-l}{-1} are consecutive, so are \lt{l}{1} and \lt{-l}{0}. 
The letters between \lt{-k}{0} and \lt{l}{1} are all negative and should be $\beta_-$ letters or else it would cause a contradiction to property (1).

Hence, the situation in the case that \lt{l}{0} is between \lt{-k}{-1} and \lt{-l}{-1} is the following: 
There is a string of consecutive $\alpha_-$ letters starting with \lt{-k}{-1} ending before \lt{l}{0} which is immediately before \lt{-l}{-1} and the corresponding $\beta$-letters for all these $\alpha$-letters are consecutive. 
If \lt{-k'}{-1} is the $\alpha_-$ letter immediately before \lt{l}{0} ($k'$ could be $k$), then property (3) gives that $-k'>-l$ and property (2) gives that $-k \geq -k'$ and hence we get $-k>-l$, which is a contradiction.

Next, suppose that all the letters between \lt{-k}{-1} and \lt{-l}{-1} are negative. 
All of them should be $\alpha_-$ letters by property (1). 
It can be shown, just as before, that the only possible positive letter between \lt{-k}{0} and \lt{-l}{0} is \lt{l}{0}. 
If \lt{l}{0} is not between \lt{-k}{0} and \lt{-l}{0}, property (2) leads to a contradiction just as in \textbf{Case 5(a)}. 
If \lt{l}{0} is between \lt{-k}{0} and \lt{-l}{0}, it should be immediately before \lt{-l}{0} and again, following a method similar to \textbf{Case 5(a)}, this leads to a contradiction using property (4).

\begin{flushleft}
\underline{\textbf{Case 6(b):} \lt{-l}{-1} is after \lt{-k}{0} in $w$.}
\end{flushleft}

In this case $w$ and $w'$ are of the form
\begin{align*}
    w&=w_1 \cdots w_{p-1} \lt{-k}{-1} \cdots \lt{-k}{0} \cdots \lt{-l}{-1} \cdots \lt{-l}{0} \cdots \\
    w'&=w'_1 \cdots w'_{p-1} \lt{-l}{-1} \cdots.
\end{align*}
By \eqref{eqp}, each letter between \lt{-k}{-1} and \lt{-l}{-1} is negative or one of \lt{l}{0} or \lt{l}{1}. 
Just as before, \lt{l}{1} cannot be between \lt{-k}{-1} and \lt{-l}{-1}. 
If \lt{l}{0} is not between \lt{-k}{-1} and \lt{-l}{-1}, then all the letters between them are negative and there is a $\beta_-$ letter, namely \lt{-k}{0}, between them and this would result in a consecutive pair of the form $\beta_-\alpha_-$, which contradicts property (1).

So \lt{l}{0} is the only positive letter between \lt{-k}{-1} and \lt{-l}{-1}. 
If \lt{l}{0} is before \lt{-k}{0}, we would get a consecutive pair of the form $\beta_-\alpha_-$ between \lt{-k}{0} and \lt{-l}{-1} which contradicts property (1). 
So \lt{l}{0} is between \lt{-k}{0} and \lt{-l}{-1}. 
If \lt{l}{0} and \lt{-l}{-1} were not consecutive, we would get a contradiction to property (1) if there were some $\beta_-$ letter between them and if all were $\alpha_-$ letters, this would contradict \Cref{ord}. 
So \lt{l}{0} and \lt{-l}{-1} are consecutive, and by property (3), the letter before \lt{l}{0} should be an $\alpha$-letter and in this case an $\alpha_-$ letter, say \lt{-k'}{-1}. 
But then we would get a consecutive pair of the form $\beta_-\alpha_-$ between \lt{-k}{0} and \lt{-k'}{-1} which contradicts property (1).

\begin{flushleft}
\underline{\textbf{Case 7:} $w_p$ is a $\alpha_-$ letter and $w'_p$ is an $\alpha_+$ letter.}
\end{flushleft}

In this case $w$ and $w'$ are of the form
\begin{align*}
    w&=w_1 \cdots w_{p-1} \lt{-k}{-1} \cdots \\
    w'&=w'_1 \cdots w'_{p-1} \lt{l}{0} \cdots
\end{align*}
for some $k,l \in [n]$. 
If $k\neq l$, we would get a contradiction to \eqref{eqp} since \lt{-k}{-1} is before \lt{l}{0} is $w$ but not in $w'$. 
So \lt{k}{0} appears after \lt{-k}{-1} in $w$ and each letter between them is positive or \lt{-k}{0}. 
Just as before \lt{-k}{0} being between \lt{-k}{-1} and \lt{k}{0} would either contradict property (1) or \eqref{eqp}. 
So all letters between \lt{-k}{-1} and \lt{k}{0} are positive. 
If there is some $\beta_+$ letter between them, there will be a consecutive pair of the form $\beta_+\alpha_+$, which would contradict property (1). 
Hence, all letters between \lt{-k}{-1} and \lt{k}{0} are $\alpha_+$ letters. But this contradicts \Cref{ord}.
\end{proof}

\subsubsection{Shi threshold}

The Shi threshold arrangement in $\mathbb{R}^n$ consists of the hyperplanes
\begin{equation*}
    X_i+X_j=0,1
\end{equation*}
for all $1 \leq i < j\leq n$. 
The translated arrangement by setting $X_i=x_i + \frac{1}{2}$, which we call $\ST_n$, has hyperplanes
\begin{equation*}
    x_i+x_j=-1,0
\end{equation*}
for all $1 \leq i < j \leq n$. 
We use the same method as before to study the regions of this arrangement by considering $\ST_n$ as a sub-arrangement of $\C_n$.
% Using Bernardi's idea of moves, we can define an equivalence on the sketches such that two sketches are equivalent if they lie in the same Shi threshold region. Then set of sketches which are the maximum in their equivalence class are in bijection with the Shi threshold regions.

% Two sketches lie in the same regions of $\ST_n$ if and only if they are on the same side of the Shi threshold hyperplanes. 
The hypeplanes in $\C_n$ that are not in $\ST_n$ are
\begin{align*}
    2x_i&=-2,-1,0\\
    x_i+x_j&=-2\\
    x_i-x_j&=-1,0,1
\end{align*}
where $1 \leq i < j \leq n$. 
Changing the inequality corresponding to exactly one of these hyperplanes are given by the $\CT$ moves as well as the move corresponding to $x_i+x_j=-2$ where $i \neq j$ are in $[n]$: Swapping consecutive $\beta_+$ and $\alpha_-$ letters (along with their negatives).

\begin{center}
            \begin{tikzpicture}
            \node (-1) at (-0.5,0) {$i$};
            \node (1) at (0.5,0) {$-j$};
            \node (a) at (-1.5,1) {};
            \node (b) at (1.5,1) {};
            \draw (-1.north)..controls +(up:5mm) and +(right:5mm)..(a.north);
            \draw (1.north)..controls +(up:5mm) and +(left:5mm)..(b.north);
            \node (-1') at (-0.5,-2) {$j$};
            \node (1') at (0.5,-2) {$-i$};
            \node (a') at (-1.5,-1) {};
            \node (b') at (1.5,-1) {};
            \draw (-1'.north)..controls +(up:5mm) and +(right:5mm)..(a'.north);
            \draw (1'.north)..controls +(up:5mm) and +(left:5mm)..(b'.north);
            \node at (2.2,-0.5) {$\longleftrightarrow$};
        \end{tikzpicture}
        \begin{tikzpicture}
            \node (-1) at (-0.5,0) {$-j$};
            \node (1) at (0.5,0) {$i$};
            \node (a) at (-1,1) {};
            \node (b) at (1,1) {};
            \draw (1.north)..controls +(up:5mm) and +(right:5mm)..(a.north);
            \draw (-1.north)..controls +(up:5mm) and +(left:5mm)..(b.north);
            \node (-1') at (-0.5,-2) {$-i$};
            \node (1') at (0.5,-2) {$j$};
            \node (a') at (-1,-1) {};
            \node (b') at (1,-1) {};
            \draw (1'.north)..controls +(up:5mm) and +(right:5mm)..(a'.north);
            \draw (-1'.north)..controls +(up:5mm) and +(left:5mm)..(b'.north);
        \end{tikzpicture}
\end{center}

Two sketches are in the same region of $\ST_n$ if and only if they are related by a series of such moves and we call such sketches $\ST$ equivalent. 
A sketch is called $\ST$ maximal if it is greater (in the lexicographic order) than all sketches to which it is $\ST$ equivalent. 
Hence the regions of $\ST_n$ are in bijection with the $\ST$ maximal sketches.
The following result can be proved just as \Cref{catthresh}.

\begin{theorem}
    A symmetric sketch is $\ST$ maximal if and only if the following hold.
    \begin{enumerate}
        \item If a $\beta$-letter is followed by an $\alpha$-letter, the $\beta$-letter should be negative and the $\alpha$-letter should be positive with different mod-values.
        \begin{center}
            \begin{tikzpicture}
            \node (-1) at (-0.5,0) {$X$};
            \node (1) at (0.5,0) {$Y$};
            \node (a) at (-1.5,1) {};
            \node (b) at (1.5,1) {};
            \draw (-1.north)..controls +(up:5mm) and +(right:5mm)..(a.north);
            \draw (1.north)..controls +(up:5mm) and +(left:5mm)..(b.north);
            \node at (4,0.5) {$\implies \text{X negative and Y positive}$};
            \node at (4.3,0) {and different mod value.};
            \end{tikzpicture}
        \end{center}
        
        \item If two $\alpha$-letters and their corresponding $\beta$-letters are both consecutive and of the same sign then the subscript of the first one should be greater.
        \begin{center}
            \begin{tikzpicture}
            \node (-2) at (-1.5,0) {$a_1$};
            \node (-1) at (-1,0) {$a_2$};
            \node at (0,0) {$\cdots$};
            \node (1) at (1,0) {$a_1$};
            \node (2) at (1.5,0) {$a_2$};
            \draw (-2.north)..controls +(up:10mm) and +(up:10mm)..(1.north);
            \draw (-1.north)..controls +(up:10mm) and +(up:10mm)..(2.north);
            \node at (5,0.5) {and $a_1,a_2$ same sign $\implies a_1 > a_2$.};
            \end{tikzpicture}
        \end{center}
        
        \item If the $n^{th}$ and $(n+1)^{th}$ $\alpha$-letters are consecutive, then so are the $(n-1)^{th}$ and $n^{th}$ with the $n^{th}$ $\alpha$-letter being positive. In such a situation, if the $(n-1)^{th}$ $\alpha$-letter is negative and the $(n-1)^{th}$ and $n^{th}$ $\beta$-letters are consecutive, the $(n-1)^{th}$ $\alpha$-letter should have a subscript greater than that of the $(n+1)^{th}$ $\alpha$-letter.
        
        \item If the $(2n-1)^{th}$ and $(2n+1)^{th}$ letters are both negative $\beta$-letters and their corresponding $\alpha$-letters are consecutive, the subscript of the $(2n-1)^{th}$ letter should be greater than that of the $(2n+1)^{th}$.
        \begin{center}
        \begin{tikzpicture}
            \node (-2) at (-1.5,0) {$X$};
            \node (-1) at (-1,0) {$Y$};
            \node at (0,0) {$\cdots$};
            \node (1) at (1,0) {$X$};
            \node (3) at (1.5,0) {$-Y$};
            \node (a) at (3,0.75) {};
            \draw [blue](2,-0.25)--(2,0.25);
            \node (2) at (2.5,0) {$Y$};
            \draw (-2.north)..controls +(up:10mm) and +(up:10mm)..(1.north);
            \draw (3.north)..controls +(up:5mm) and +(left:5mm)..(a.north);
            \draw (-1.north)..controls +(up:10mm) and +(up:10mm)..(2.north);
            \node at (6.5,0.5) {and $\quad X,Y$ negative $\implies X>Y$.};
            \end{tikzpicture}
        \end{center}
    \end{enumerate}
    Hence the regions of $\ST_n$ are in bijection with sketches of the form described above.
\end{theorem}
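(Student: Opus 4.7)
The plan is to mirror the two-part structure of the proof of \Cref{catthresh}: establish \emph{necessity} of conditions (1)--(4) by showing each violated condition admits an explicit move producing a lexicographically larger $\ST$-equivalent sketch, and then establish \emph{sufficiency} by assuming conditions (1)--(4) hold and deriving a contradiction from any hypothetical $w' \succ w$ with $w' \sim_{\ST} w$. Before beginning, I would record the Shi analogue of the equivalence principle \eqref{eqp}. Since the hyperplanes missing from $\C_n$ but present in $\CT_n$, namely the $x_i + x_j = -2$ hyperplanes, are \emph{also} removed when passing to $\ST_n$, the relationships of opposite sign / different mod-value that are preserved by $\ST$-equivalence are exactly those preserved in the Catalan threshold case \emph{minus} the $\{\lt{i}{1}, \lt{-j}{-1}\}$ pairs with $i \neq j$ (i.e.\ exactly the $\beta_+/\alpha_-$ opposite-sign pairs).

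For necessity, conditions (2), (3), (4) are handled by the $\CT$ moves exactly as in \Cref{catthresh}, and the bulk of condition (1) is too. The one genuinely new sub-case is a $\beta_+$ immediately followed by an $\alpha_-$: this is where the new move of swapping $\beta_+ \alpha_-$ kicks in, and a single local picture shows that such a configuration can always be replaced by a lexicographically larger one. Condition (1) is therefore forced in its strengthened form.

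For sufficiency, I would run the same seven-case dichotomy on $(w_p, w'_p)$ as in the Catalan proof, where $p$ is the first position at which $w$ and $w'$ disagree. Cases 1, 2, 5, 6, 7 translate verbatim: the arguments only ever invoked preserved inequalities between opposite-sign letters of different mod-value \emph{other than} $\beta_+/\alpha_-$, which remain preserved in the Shi setting; all uses of the stronger form of condition (1) are still available (in fact more are). In each of these cases, the same chain of deductions forces a consecutive pair forbidden by condition (1), (2), (3), or (4), or contradicts \Cref{ord}.

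The main obstacle will be Cases 3 and 4, where $w_p$ is a $\beta$-letter and $w'_p$ is an $\alpha$-letter of opposite sign. In the Catalan threshold proof, one immediately concluded $k = l$ by the preservation of the $\{\lt{k}{1}, \lt{-l}{-1}\}$ (respectively $\{\lt{-k}{0}, \lt{l}{0}\}$) relationship, but in the Shi case the $\beta_+/\alpha_-$ relationship is precisely what the new move destroys, so $k \neq l$ is now a priori possible. However, the strengthened condition (1) directly forbids a $\beta_+$ to be followed by any $\alpha_-$ in $w$, and this restores the conclusion: tracing the letters between $\lt{k}{1}$ and $\lt{-l}{-1}$ (respectively between $\lt{-k}{0}$ and $\lt{l}{0}$) in $w$ via the surviving preserved pairs, one finds at some point a locally consecutive $\beta\alpha$ of a type ruled out by (1), yielding the desired contradiction. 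The remaining $k = l$ subcase of Case 3 then proceeds verbatim as in the Catalan threshold proof. Once this case is closed, the sufficiency direction follows and the theorem is proved.
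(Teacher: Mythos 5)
Your overall strategy is sound, and your setup is correct: passing from $\CT_n$ to $\ST_n$ removes the $x_i+x_j=-2$ hyperplanes, which adds one new move (swap an adjacent $\beta_+$, $\alpha_-$ pair), strengthens condition~(1) to forbid $\beta_+\alpha_-$ juxtapositions, and weakens the analogue of \eqref{eqp} by dropping preservation of $\beta_+/\alpha_-$ opposite-sign pairs. The necessity direction, and the plan to rerun the seven-case dichotomy leaning harder on the strengthened condition~(1), is the right shape.

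However, your inventory of which cases actually change is wrong in both directions, and this is a genuine gap. You assert that Cases~1,~2,~5,~6,~7 "translate verbatim" because "the arguments only ever invoked preserved inequalities \ldots other than $\beta_+/\alpha_-$," but that is false. In Cases~2 and~6, $w'_p=\lt{-l}{-1}$ is an $\alpha_-$-letter, and the first deduction "each letter between $w_p$ and $\lt{-l}{-1}$ is negative or has mod value $l$" comes from preserving $(c,\lt{-l}{-1})$ — which now fails whenever $c$ is a $\beta_+$-letter, so $\beta_+$-letters of mod value $\neq l$ are no longer excluded at that stage. Likewise in Case~5(a) the constraint on what may sit between $\lt{k}{1}$ and $\lt{l}{1}$ uses preservation of $(c,\lt{k}{1})$ and $(c,\lt{l}{1})$ with $\lt{k}{1},\lt{l}{1}$ both $\beta_+$; for $c$ an $\alpha_-$-letter these are precisely the dropped pairs. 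These cases do close in the end, but only via a replacement argument that repeatedly invokes the strengthened~(1) (nothing but a $\beta$ may follow a $\beta_+$, no $\alpha_+$ of mod value $\neq l$ can occur in the relevant interval, hence the letter preceding the terminal $\alpha_-$ cannot exist), not by the verbatim Catalan chain. In the other direction, you flag Case~4 as an obstacle alongside Case~3, but it is not: the pair $\{\lt{-k}{0},\lt{l}{0}\}$ used there to force $k=l$ is $\beta_-/\alpha_+$, corresponding to $x_k+x_l=0$, which remains in $\ST_n$, so it is preserved and the Catalan argument for Case~4 goes through unchanged. Only Case~3, built on the genuinely destroyed $\beta_+/\alpha_-$ pair $\{\lt{k}{1},\lt{-l}{-1}\}$, loses its opening step; your sketch of how to repair it (trace forward from $\lt{k}{1}$ until a forbidden $\beta\alpha_-$ juxtaposition is forced) is the right instinct, but it is left entirely as a gesture — and the same kind of tracing is in fact required in the cases you declared trivial.
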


\section{Concluding remarks}\label{concrem}
We end the paper with some open questions. 
Bernardi \cite{ber} has dealt with arbitrary deformations of the braid arrangement. 
The first (ambitious) problem is to generalize all the results in his paper to arbitrary deformations of all reflection arrangements. 
This is easier said then done! 
Bernardi proves that the number of regions is equal to the signed sum of certain ``boxed trees''. 
So the first step is to generalize the notion of boxed trees to certain decorated forests and then prove the counting formula, this is a work in progress. 
For certain well-behaved arrangements called ``transitive deformations'' Bernardi establishes an explicit bijection between the regions and the corresponding trees, via sketches.
We don't have trees for all deformations of reflection arrangements but, we do have sketches that are in bijection with regions of (extended) Catalan deformations. 
%Bernardi also expresses the exponential generating function for the characteristic (and the coboundary) polynomial in terms of trees. 
%We do have partial results in this direction, where we use the finite field method to get an expression for the exponential generating function. 

The main motivation behind Bernardi's work is an interesting pattern concerning certain statistic on labeled binary trees. 
Ira Gessel observed that the multivariate generating function for this statistic specializes to region counts of certain deformations of the braid arrangement. 
%However, this statistic is not readily translated in the language of sketches. 
So a new research direction could be to try and define a statistic on non-nesting partitions (of all types) such that the associated generating function specializes to region counts. 

Another aspect of Bernardi's work that has not been discussed in the present paper is the coboundary and Tutte polynomials. 
Using either, the finite field method or the method inspired by statistical mechanics one should get a closed form expression for these polynomials of the deformations we have considered. 
Moreover, the expression should be in terms of either sketches or non-nesting partitions. 

Having a combinatorial model for the coefficients of the characteristic polynomial could be quite useful. 
Especially to derive various inequalities that they satisfy. 
For example, denote by $C(m, n, j)$ be the number of symmetric $m$-non-nesting partitions of size $n$ with $j$ positive compartments.
Then following inequalities are not difficult to prove:
\begin{enumerate}
    \item $C(m, n, j) \leq C(m, n + 1, j)$
    \item $C(m, n, j) \leq C(m, n + 1, j + 1)$
    \item $C(m, n, j) \geq \sum_{k \geq j + 1} \binom{k}{j} C(m, n, k)$. 
    %In particular, $C(m, n, j) \geq C(m, n, j + 1)$.
\end{enumerate}
A research direction here is to develop a case-free strategy to obtain more such information. 
For example, we know that the coefficients are unimodal so, identify the peak in each case. 
%$C(m, n, j) \geq C(m + 1, n, j)$ is probably true but have to find some way to increase size of each block of partition and maintain symmetry.
%Maybe check how adding leaf as rightmost child of corresponding tree affects the partition.

Recall the Raney numbers that are defined by
\begin{equation*}
    A_n(m,r) := \frac{r}{n(m+1)+r}\binom{n(m+1)+r}{n}
\end{equation*}
for all positive integers $n,m,r$. 
The Catalan numbers are a special case of Raney numbers, obtained by setting $m=n=1$.
It was shown in \cite{dmwfuss23} that the number of regions of the hyperplane arrangement
\begin{equation*}\label{arrdef}
    \{x_i=0 \mid i \in [n]\} \cup \{x_i=2^kx_j \mid k \in [-m,m], 1\leq i<j \leq n\}
\end{equation*}
is equal to $n! A_n(m, 2)$. 
Note that these arrangements define a GESA. 
Find a family of arrangements which is GESA and the number of regions is $n! A_n(m, r)$. 
One can use tuples of labeled Dyck paths to enumerate these regions. 
So one can try and apply techniques from this paper to find a static for these objects.

\section{Acknowledgements}

The authors are partially supported by a grant from the Infosys Foundation. 
The computer algebra system SageMath \cite{sage} provided valuable assistance in studying examples.

\bibliographystyle{abbrv} % bibliography style
\bibliography{refs} % References file

\begin{thebibliography}{10}

\bibitem{ath_adv}
C.~A. Athanasiadis.
\newblock Characteristic polynomials of subspace arrangements and finite
  fields.
\newblock {\em Adv. Math.}, 122, 1996.

\bibitem{ath_non}
C.~A. Athanasiadis.
\newblock On noncrossing and nonnesting partitions for classical reflection
  groups.
\newblock {\em Electron. J. Combin.}, 5, 1998.

\bibitem{ath_lin}
C.~A. Athanasiadis.
\newblock Extended {L}inial hyperplane arrangements for root systems and a
  conjecture of {P}ostnikov and {S}tanley.
\newblock {\em J. Algebraic Combin.}, 10, 1999.

\bibitem{ath_shi}
C.~A. Athanasiadis and S.~Linusson.
\newblock A simple bijection for the regions of the {S}hi arrangement of
  hyperplanes.
\newblock {\em Discrete Math.}, 204, 1999.

\bibitem{balasubramanian2019generalized}
A.~R. Balasubramanian.
\newblock Generalized threshold arrangements.
\newblock {\em arXiv.1904.08903}, 2019.

\bibitem{ber}
O.~Bernardi.
\newblock Deformations of the braid arrangement and trees.
\newblock {\em Adv. Math.}, 335, 2018.

\bibitem{branch}
P.~Deshpande and K.~Menon.
\newblock A branch statistic for trees: Interpreting coefficients of the
  characteristic polynomial of braid deformations.
\newblock {\em Enumer. Combin. Appl.}, 3(1):S2R5, 2023.

\bibitem{dmwfuss23}
P.~Deshpande, K.~Menon, and W.~Sarkar.
\newblock Refinements of the braid arrangement and two-parameter
  {Fuss}-{Catalan} numbers.
\newblock {\em J. Algebr. Comb.}, 57(3):687--707, 2023.

\bibitem{thresh}
P.~Deshpande, K.~Menon, and A.~Singh.
\newblock A combinatorial statistic for labeled threshold graphs.
\newblock {\em Enumer. Combin. Appl.}, 1(3):S2R22, 2021.

\bibitem{non_bij}
A.~Fink and B.~Iriarte~Giraldo.
\newblock Bijections between noncrossing and nonnesting partitions for
  classical reflection groups.
\newblock {\em Port. Math.}, 67, 2010.

\bibitem{Hilton1991CatalanNT}
P.~Hilton and J.~Pedersen.
\newblock Catalan numbers, their generalization, and their uses.
\newblock {\em Math. Intelligencer}, 13(2):64--75, 1991.

\bibitem{thresh_book}
N.~V.~R. Mahadev and U.~N. Peled.
\newblock {\em Threshold graphs and related topics}, volume~56 of {\em Annals
  of Discrete Mathematics}.
\newblock North-Holland Publishing Co., Amsterdam, 1995.

\bibitem{typc}
A.~Micheli and V.~N. Dinh.
\newblock Regions of the type {C} {C}atalan arrangement.
\newblock {\em arXiv:1912.09753}, 2019.

\bibitem{seo_shi}
S.~Seo.
\newblock Shi threshold arrangement.
\newblock {\em Electron. J. Combin.}, 19, 2012.

\bibitem{seo_cat}
S.~Seo.
\newblock The {C}atalan threshold arrangement.
\newblock {\em J. Integer Seq.}, 20, 2017.

\bibitem{oeis}
N.~J.~A. Sloane.
\newblock The {O}n-{L}ine {E}ncyclopedia of {I}nteger {S}equences.
  \href{http://oeis.org}{http://oeis.org}.

\bibitem{sta_ec2}
R.~P. {Stanley}.
\newblock {\em {Enumerative combinatorics. Vol. 2.}}
\newblock Cambridge: Cambridge University Press, 2001.

\bibitem{sta_hyp}
R.~P. Stanley.
\newblock An introduction to hyperplane arrangements.
\newblock {\em Geometric combinatorics}, 13, 2004.

\bibitem{sage}
{The Sage Developers}.
\newblock {\em {S}ageMath, the {S}age {M}athematics {S}oftware {S}ystem
  ({V}ersion 9.5.0)}, 2022.

\bibitem{zas}
T.~Zaslavsky.
\newblock Facing up to arrangements: face-count formulas for partitions of
  space by hyperplanes.
\newblock {\em Mem. Amer. Math. Soc.}, 154, 1975.

\end{thebibliography}

\end{document}